\numberwithin{equation}{section}
\theoremstyle{plain}
\newtheorem{theorem}[subsubsection]{Theorem}
\newtheorem{lemma}[subsubsection]{Lemma}
\newtheorem{prop}[subsubsection]{Proposition}
\newtheorem{cor}[subsubsection]{Corollary}
\newtheorem{conj}[subsubsection]{Conjecture}
\theoremstyle{definition}
\newtheorem{defn}[subsubsection]{Definition}
\newtheorem{remark}[subsubsection]{Remark}
\newtheorem{exam}[subsubsection]{Example}
\newtheorem{ques}[subsubsection]{Question}
\def\AA{\mathbb{A}}
\def\DD{\mathbb{D}}
\def\FF{\mathbb{F}}
\def\GG{\mathbb{G}}
\def\NN{\mathbb{N}}
\def\PP{\mathbb{P}}
\def\QQ{\mathbb{Q}}
\def\RR{\mathbb{R}}
\def\ZZ{\mathbb{Z}}
\newcommand\cB{\mathcal{B}}
\newcommand\cD{\mathcal{D}}
\newcommand\cE{\mathcal{E}}
\newcommand\cF{\mathcal{F}}
\newcommand\cG{\mathcal{G}}
\newcommand\cH{\mathcal{H}}
\newcommand\cL{\mathcal{L}}
\newcommand\cO{\mathcal{O}}
\newcommand\cP{\mathcal{P}}
\newcommand\cQ{\mathcal{Q}}
\newcommand\cV{\mathcal{V}}
\newcommand\cX{\mathcal{X}}
\newcommand\cY{\mathcal{Y}}
\newcommand\cZ{\mathcal{Z}}
\def\bH{\mathbf{H}}
\def\bI{\mathbf{I}}
\def\bK{\mathbf{K}}
\def\bL{\mathbf{L}}
\def\bP{\mathbf{P}}
\newcommand\frX{\mathfrak{X}}
\newcommand\fm{\mathfrak{m}}
\newcommand\frp{\mathfrak{p}}
\newcommand\frq{\mathfrak{q}}
\newcommand\tilW{\widetilde{W}}
\def\dG{\widehat{G}}
\newcommand\AJ{\textup{AJ}}
\newcommand{\Bun}{\textup{Bun}}
\newcommand{\Ch}{\textup{Ch}}
\newcommand{\coker}{\textup{coker}}
\newcommand\ev{\textup{ev}}
\newcommand{\Fl}{\textup{Fl}}
\newcommand{\fl}{f\ell}
\newcommand\Forg{\textup{Forg}}
\newcommand\Fr{\textup{Fr}}
\newcommand\Gal{\textup{Gal}}
\newcommand{\Gr}{\textup{Gr}}
\newcommand{\Herm}{\textup{Herm}}
\newcommand{\Hk}{\textup{Hk}}
\newcommand\IC{\textup{IC}}
\newcommand\id{\textup{id}}
\newcommand\inv{\textup{inv}}
\newcommand\Lie{\textup{Lie}\ }
\newcommand\Mot{\textup{Mot}}
\newcommand{\Nm}{\textup{Nm}}
\newcommand\Perv{\textup{Perv}}
\newcommand{\Pic}{\textup{Pic}}
\newcommand\pr{\textup{pr}}
\newcommand\Proj{\textup{Proj}}
\newcommand\Rep{\textup{Rep}}
\newcommand{\Res}{\textup{Res}}
\newcommand\rk{\textup{rk}}
\newcommand\Sht{\textup{Sht}}
\newcommand\Spec{\textup{Spec}\ }
\newcommand\triv{\textup{triv}}
\newcommand{\ur}{\textup{ur}}
\newcommand{\val}{\textup{val}}
\newcommand{\Vect}{\textup{Vect}}
\newcommand\Aut{\textup{Aut}}
\newcommand\Hom{\textup{Hom}}
\newcommand\End{\textup{End}}
\newcommand{\Ext}{\textup{Ext}}
\newcommand\GL{\textup{GL}}
\newcommand\PGL{\textup{PGL}}
\newcommand\Ug{\textup{U}}
\newcommand\SL{\textup{SL}}
\newcommand\Sp{\textup{Sp}}
\newcommand{\Gm}{\GG_m}
\newcommand{\Ad}{\textup{Ad}}
\newcommand\xch{\mathbb{X}^*}
\newcommand\xcoch{\mathbb{X}_*}
\newcommand{\incl}{\hookrightarrow}
\newcommand{\isom}{\stackrel{\sim}{\to}}
\newcommand{\surj}{\twoheadrightarrow}
\newcommand{\Ql}{\QQ_{\ell}}
\newcommand{\Qlbar}{\overline{\QQ}_\ell}
\newcommand{\hotimes}{\widehat{\otimes}}
\renewcommand{\j}[1]{\langle{#1}\rangle}
\newcommand{\wt}[1]{\widetilde{#1}}
\newcommand{\wh}[1]{\widehat{#1}}
\newcommand\quash[1]{}
\newcommand\un{\underline}
\newcommand{\ov}{\overline}
\newcommand{\bs}{\backslash}
\newcommand{\tl}[1]{[\![#1]\!]}
\newcommand{\lr}[1]{(\!(#1)\!)}
\newcommand\sss{\subsubsection}
\newcommand\xr{\xrightarrow}
\newcommand\op{\oplus}
\newcommand\ot{\otimes}
\newcommand\one{\mathbf{1}}
\renewcommand\c\circ
\newcommand{\vn}{\varnothing}
\newcommand{\cohog}[2]{\textup{H}^{#1}({#2})}     
\newcommand{\cohoc}[2]{\textup{H}_{c}^{#1}({#2})}     
\newcommand\upH{\textup{H}}
\newcommand{\oll}[1]{\overleftarrow{#1}}
\newcommand{\orr}[1]{\overrightarrow{#1}}
\renewcommand\a\alpha
\renewcommand\b\beta
\newcommand\g\gamma
\newcommand\G\Gamma
\renewcommand\d\delta
\newcommand\D\Delta
\newcommand{\e}{\epsilon}
\newcommand{\io}{\iota}
\newcommand{\ka}{\kappa}
\renewcommand{\th}{\theta}
\newcommand{\ph}{\varphi}
\renewcommand\r\rho
\newcommand{\s}{\sigma}
\newcommand{\Sig}{\Sigma}
\renewcommand{\t}{\tau}
\newcommand{\y}{\eta}
\newcommand{\z}{\zeta}
\newcommand{\vp}{\varpi}
\renewcommand{\l}{\lambda}
\renewcommand{\L}{\Lambda}
\newcommand{\om}{\omega}
\newcommand{\Om}{\Omega}
\newcommand\sh{\sharp}
\newcommand\da{\dagger}
\newcommand{\kbar}{\overline{k}}
\newcommand\Sch{\mathbf{Sch}}
\newcommand\Gpd{\mathbf{Gpd}}
\newcommand{\dr}{\dashrightarrow}
\newcommand{\dxr}{\dashrightarrow}
\newcommand{\br}{\breve}
\newcommand{\pl}{\partial}
\newcommand{\QGr}{\mathrm{QGr}}
\newcommand{\disj}{\mathrm{disj}}
\newcommand{\pos}{\mathrm{pos}}
\newcommand{\Sat}{\mathrm{Sat}}
\newcommand{\HN}{\mathrm{HN}}
\newcommand{\ab}{\mathrm{ab}}
\newcommand{\colim}{\mathrm{colim}}
\newcommand{\IsoSht}{\mathrm{IsoSht}}
\newcommand{\IsoCrys}{\mathrm{IsoCrys}}
\newcommand{\sep}{\mathrm{sep}}
\renewcommand{\div}{\mathrm{div}}
\newcommand{\ord}{\mathrm{ord}}
\newcommand{\Dr}{\mathrm{Dr}}
\newcommand{\Rees}{\mathrm{Rees}}
\newcommand\sL{\mathscr{L}}
\newcommand\HD{\textup{HD}}
\newcommand\GGP{\textup{GGP}}
\newcommand\KR{\textup{KR}}
\title{Introduction to Shtukas and their moduli}
\dedicatory{}
\author{Zhiwei Yun}
\thanks{Supported partially by the Simon Foundation and the Packard Foundation.}
\address{Department of Mathematics, Massachusetts Institute of Technology, 77 Massachusetts Ave, Cambridge, MA 02139}
\email{zyun@mit.edu}
\date{}
\subjclass[2010]{Primary 11G09, 14D20.}
\keywords{}
\begin{document}

\begin{abstract}
These are lectures notes of my talks at the IHES summer school on the Langlands program in 2022. We give an introduction to the notion of Shtukas,  their relation with more familiar geometric objects,  their moduli spaces and applications to automorphic forms. 
\end{abstract}

\maketitle
\tableofcontents

\section{Introduction}


The notion of a Shtuka was introduced by Drinfeld \cite{Dr-FBun} under the name ``$F$-bundles''. Here $F$ stands for Frobenius. Indeed a Shtuka is a bundle with a kind of Frobenius structure.  The word Shtuka has its origin in Russian, which means ``gadget'' or ``thing''. Shtukas generalize the earlier notion of ``elliptic modules'' introduced by Drinfeld, which are analogs of elliptic curves. General Shtukas can be thought of as a version of motives not defined in characteristic $p$ but whose coefficient field is of characteristic $p$.  

Moduli spaces or stacks of Shtukas are function field analogs of Shimura varieties. As such, they have been playing a key role in establishing cases of the Langlands correspondence over function fields. 

Drinfeld \cite{Dr-Pet, Dr-comp} used the moduli space of rank $2$ Shtukas to prove the Ramanujan-Petersson  conjecture and the Langlands conjecture for $\GL_{2}$ over a global function field.  

Laumon, Rapoport and Shtuhler \cite{LRS} used Shtukas for division algebras to prove part of the global Langlands conjecture for division algebras over a function field, and to prove the local Langlands conjecture for $\GL_{n}$ (any $n$) over local function fields.

L. Lafforgue \cite{LLaff} used the moduli stack of rank $n$ Shtukas and their compactifications to prove the Ramanujan-Petersson conjecture and the Langlands conjecture for $\GL_{n}$ over a global function field for any $n\in\NN$. 

Varshavsky \cite{Var} generalized the notion of Shtukas from the context of vector bundles to the context of principal $G$-bundles, for any split reductive group $G$ over the base field $\FF_{q}$. We shall call them $G$-Shtukas. He proved fundamental results about the geometry of the moduli stack of $G$-Shtukas.

More recently, V. Lafforgue \cite{VLaff} used Varshavsky's moduli of $G$-Shtukas, combined with several ingenious ideas (most importantly the Excursion Operators) to prove the ``automorphic to Galois'' direction of the global Langlands conjecture for an arbitrary connected reductive group $G$ over a function field. 

Although not within the scope of this article, there are local versions of Shtukas which play an important role in proving the local Langlands conjectures. In \cite{GL}, Genestier  and V. Lafforgue used local Shtukas to prove the ``representations to Galois'' direction of the local Langlands conjecture for an arbitrary connected reductive group $G$ over a local function field. Another exciting development is Scholze's theory of local Shtukas for mixed characteristic local fields, whose moduli spaces generalize Rapoport-Zink spaces, see \cite{S-Berkeley}.

In this survey article, we will introduce from the scratch the notion of Shtukas for a split reductive group over $\FF_{q}$ in \S\ref{s:def}. We will then describe its relationship with Drinfeld modules, and its analogy with Deligne-Lusztig varieties and motives over finite fields in \S\ref{s:sib}. Then in \S\ref{s:geom} and \S\ref{s:coho} we will study important geometric constructions on the moduli stack of Shtukas such as the Hecke correspondence and the partial Frobenius, and their implication on the cohomology of the moduli stacks. These serve as background for the lectures of C.Xue \cite{Xue}. In the last section  \S\ref{s:adv}, we mention several interesting topics that deserve further study, such as compactifications of the moduli stacks, and recent works relating special cycles on the moduli stacks of Shtukas to higher derivatives of $L$-functions and Eisenstein series.

There are excellent expository articles on Shtukas, for example \cite{Laumon} and \cite{NDT-Sht}, from which the author learned the theory. The research articles \cite{Var} and \cite{ARH} can serve as comprehensive references for details of proofs. Finally, large part of this article follows the treatment in the breakthrough paper \cite{VLaff}, from which the readers can find more details and applications to the Langlands program.  


\subsection*{Acknowledgement} 
I thank the organizers of the 2022 IHES Summer School on the Langlands Program for inviting me to speak on this topic. I thank Jianqiao Xia and the referee for carefully reading the draft and useful suggestions.

\section{Basic definitions}\label{s:def}

\subsection{$G$-torsors on curves}
Let $k$ be the finite field $\FF_{q}$. For two $k$-stacks $\cX$ and $\cY$, we use $\cX\times\cY$ to denote their fiber product over $\Spec k$.

Let $X$ be a geometrically connected smooth projective curve over $k$. Let $F=k(X)$ be the function field of $X$. Let $|X|$ be the set of closed points of $X$. For $x\in |X|$, let $\cO_{x}, \fm_{x}, F_{x}$ and $k_{x}$ denote the completed local ring of $X$ at $x$, its maximal ideal, fraction field and residue field respectively.

\sss{$G$-torsors} Let $G$ be a split connected reductive group over $k$. A $G$-torsor over a $k$-scheme $Y$ is a scheme $\cE$ with a right action of $G$ and a $G$-invariant map $\pi: \cE
\to Y$ such that \'etale locally over $Y$, $\cE$ is $G$-equivariantly isomorphic to $G\times Y$ with the right translation action of $G$.

Let $\Bun_{G}: \Sch_{k}\to \Gpd$ (where $\Sch_{k}$ is the category of $k$-schemes, and $\Gpd$ the category of groupoids) be the functor that assigns a $k$-scheme $S$ the groupoid of $G$-torsors over $X\times S:=X\times_{k}S$, where morphisms are  isomorphisms of $G$-torsors. Then $\Bun_{G}$ is representable by a smooth Artin stack locally of finite type over $k$. 

\begin{exam} Let $G=\GL_{n}$. There is an equivalence of categories between the groupoid of $\GL_{n}$-torsors over a scheme $Y$ and the groupoid of vector bundles of rank $n$ over $Y$. Therefore, $\Bun_{G}$ is isomorphic to the moduli stack $\Bun_{n}$ of rank $n$ vector bundles over $X$.

If $G=\SL_{n}$, then $\Bun_{G}(S)$ parametrizes pairs $(\cV, \l)$ where $\cV$ is a vector bundle of rank $n$ over $X\times S$, and $\l$ is an isomorphism $\l: \cO_{X\times S}\isom \det(\cV):=\wedge^{n}(\cV)$.
\end{exam}

\begin{exam} Let $G=\Sp_{2n}$. There is an equivalence of categories between $\Sp_{2n}$-torsors over a scheme $Y$ and the groupoid of  symplectic vector bundles $(\cV, \om)$ over $Y$. Here $\cV$ is a vector bundle of rank $2n$ over $Y$, and $\om: \cV\ot \cV\to \cO_{Y}$ is a perfect alternating pairing. Therefore, $\Bun_{G}$ is isomorphic to the moduli stack  of rank $2n$ symplectic vector bundles over $X$.
\end{exam}

\sss{Level structure}\label{sss:level}

Let $D\subset X$ be a finite subscheme, and let $S$ be a $k$-scheme. A $D$-level structure of a $G$-torsor $\cE$ over $X\times S$ is a trivialization of the $G$-torsor $\cE|_{D\times S}$ over $D\times S$ obtained by restriction.  Let $\Bun_{G,D}: \Sch_{k}\to \Gpd$ be the functor whose value at $S$ is the groupoid of pairs $(\cE,\io_{D})$, where $\cE$ is a $G$-torsor over $X\times S$ and $\io_{D}$ is a $D$-level structure of $\cE$. Then $\Bun_{G,D}$ is representable by an Artin stack, and the forgetful map $\Forg_{D}: \Bun_{G,D}\to \Bun_{G}$ is a $G_{D}:=\Res_{D/k}(G\times D)$-torsor.

\begin{exam} Let $G=\Sp_{2n}$, and let $D=x$ for some $x\in |X|$. For any $k$-scheme $S$,  $\Bun_{G,x}(S)$ is the groupoid of tuples $(\cV, \om, \{e_{1},\cdots, e_{n}, f_{1},\cdots, f_{n}\})$, where $(\cV,\om)$ is a symplectic vector bundle of rank $2n$ over $X\times S$, $e_{i}$ and $f_{i}$ are sections of $\cV|_{D\times S}$, such that $\om(e_{i},e_{j})=0$, $\om(f_{i},f_{j})=0$ and $\om(e_{i},f_{j})=\d_{ij}$ for all $1\le i,j\le n$ (all these are equalities of sections of $\cO_{D\times S}$).  
\end{exam}

It is also useful to consider other level structures. Let $x\in |X|$. Let $L^{+}_{x}G$ be the formal arc group of $G$ at $x$. This is the group scheme over $k_{x}$ whose $R$-points ($R$ is a $k_{x}$-algebra) are $G(R\hotimes_{k_{x}} \cO_{x})$. Let $L_{x}G$ be the formal loop group of $G$ at $x$. This is the group ind-scheme over $k_{x}$ whose $R$-points are $G(R\hotimes_{k_{x}} F_{x})$. See \cite[\S1.a]{PR}. 

For $n\in \NN$, let $D_{nx}=\Spec \cO_{x}/\fm^{n}_{x}$. We have the truncated version $G_{nx}:=\Res_{D_{nx}/k_{x}}(G\times D_{nx})$ of $L^{+}_{x}G$.  
Let $r_{nx}: L^{+}_{x}G\to G_{nx}$ be the reduction map.  Let
\begin{equation*}
L^{(n)}_{x}G=\ker(r_{nx})
\end{equation*}
be the $n$th principal congruence subgroup of $L^{+}_{x}G$. 

Let $\bK_{x}\subset L^{+}_{x}G$ be a subgroup scheme over $k_{x}$ of the form $r_{nx}^{-1}(\ov\bK_{x,n})$ for some $n\in\NN$ and $k_{x}$-subgroup $\ov\bK_{x,n}\subset G_{nx}$. Then we define $\Bun_{G,\bK_{x}}$ to be the quotient stack
\begin{equation*}
\Bun_{G,\bK_{x}}:=\Bun_{G, nx}/(\Res_{k_{x}/k}\ov\bK_{x,n}).
\end{equation*}
Note that $\Res_{k_{x}/k}G_{nx}=\Res_{D_{nx}/k}(G\times D_{nx})$ acts on $\Bun_{G, nx}$, hence so does its subgroup $\Res_{k_{x}/k}\ov\bK_{x,n}$. It is easy to see that $\Bun_{G,\bK_{x}}$ is independent of the choice of $n$ up to canonical isomorphisms.


\begin{exam} Let $\bI_{x}$ be an Iwahori subgroup of $L^{+}_{x}G$, i.e., it is the preimage of a Borel subgroup $B_{x}\subset G\ot k_{x}$ under $r_{x}: L^{+}_{x}G\to G\ot k_{x}$. Then an $S$-point of $\Bun_{G,\bI_{x}}$ is the same datum as a pair $(\cE, \cF_{x})$ where $\cE$ is a $G$-torsor over $X\times S$ and $\cF_{x}$ is a Borel reduction of the restriction $\cE|_{\{x\}\times S}$, i.e., a section of the bundle $\cE|_{\{x\}\times S}\times^{G}\fl_{G}\to \{x\}\times S$ ($\fl_{G}$ is the flag variety of $G$).  From the latter description we see that  $\Bun_{G,\bI_{x}}$ is canonically independent of the choice the Borel subgroup $B_{x}$. 
\end{exam}

More generally, one can define the moduli stack  $\Bun_{G,\bK_{x}}$ when $\bK_{x}\subset L_{x}G$ is not necessarily contained in $L^{+}_{x}G$, but contains a congruence subgroup of $L^{+}_{x}G$ as a normal subgroup with finite codimension. We call such $\bK_{x}$ a {\em level group} at $x$. To construct $\Bun_{G,\bK_{x}}$,  we let $\Bun_{G, \infty x}$ be the inverse limit of the tower $(\Bun_{G,nx})_{n\in\NN}$.  It can be shown that the $L^{+}_{x}G$-action on $\Bun_{G,\infty x}$ extends canonically to an action of $L_{x}G$. We then define $\Bun_{G,\bK_{x}}$ to be the quotient $\Bun_{G,\infty x}/(\Res_{k_{x}/k}\bK_{x})$. To see this is an Artin stack, we choose a principal congruence subgroup $L^{(n)}_{x}G$ normal in $\bK_{x}$, and let $\ov \bK_{x,n}=\bK_{x}/L^{(n)}_{x}G$, and set $\Bun_{G,\infty x}/(\Res_{k_{x}/k}\bK_{x})$ to be $\Bun_{G,nx}/(\Res_{k_{x}/k}\ov\bK_{x,n})$. In particular, we can apply the above construction to parahoric subgroups $\bP_{x}\subset L_{x}G$. 

\begin{remark} For a general level structure $\bK_{x}$,  it is customary to call an $S$-point of $\Bun_{G,\bK_{x}}$ a $G$-torsor over $X\times S$ with a $\bK_{x}$-level structure at $x$. However, this terminology can be misleading: when $\bK_{x}$ is not necessarily contained in $L^{+}_{x}G$, an $S$-point of $\Bun_{G,\bK_{x}}$ does not induce a $G$-torsor on the whole of $X\times S$, only on $(X-\{x\})\times S$. The following Example illustrates this point.
\end{remark}

\begin{exam}\label{ex:SL parahoric} Let $G=\SL_{n}$ and fix $x\in |X|$. Then up to conjugacy $L_{x}G$ has $n$ maximal parahoric subgroups $\bP^{(i)}_{x}$, parametrized by $i\in \ZZ/n\ZZ$. If we let $\wt G=\GL_{n}$, then we can take $\bP^{(i)}_{x}=g_{i}(L^{+}_{x}\wt G)g_{i}^{-1}\cap L_{x}G$, where $g_{i}$ is any element in $\wt G(F_{x})$ satisfying $\val_{F_{x}}(\det(g_{i}))\equiv i$ mod $n$. 

We have the following interpretation of $\Bun_{G, \bP^{(i)}_{x}}$. Consider the map $\det: \Bun_{\wt G}\to \Pic_{X}$ sending $\cV$ to $\wedge^{n}(\cV)$. Then there is a canonical isomorphism of stacks
\begin{equation*}
\Bun_{G, \bP^{(i)}_{x}}\cong {\det}^{-1}(\cO(-ix)), \quad i\in \ZZ.
\end{equation*}
Of course tensoring with $\cO(x)$ identifies $\det^{-1}(\cO(-ix))$ and $\det^{-1}(\cO(-(i-n)x))$, so the isomorphism class of the right side above depends only on the image of $i$ in $\ZZ/n\ZZ$. 

In other words, for $i\in \ZZ$, the $S$-points of $\Bun_{G, \bP^{(i)}_{x}}$ classify pairs $(\cV,\l)$ where $\cV$ is a rank $n$ vector bundle over $X\times S$, and $\l$ is an isomorphism $\cO(-ix)\isom \wedge^{n}(\cV)$.
\end{exam}

The above definitions clearly extend to the case of level structures at not just one but finitely many points. If $\Sig\subset |X|$ is a finite subset, and $\bK:=(\bK_{x})_{x\in \Sigma}$ is a collection of level groups at each $x\in\Sig $, we denote the resulting moduli stack by $\Bun_{G,\bK}$. 


\subsection{Hecke stack for $G$-torsors}
In the following we fix a finite set of places $\Sigma\subset |X|$ and a level group $\bK_{x}$ for each $x\in \Sigma$.  Let $U=X-\Sigma$. We denote by $\bK$ the collection of level groups $(\bK_{x})_{x\in \Sigma}$. The moduli stack $\Bun_{G,\bK}$ is defined.

\sss{Hecke stack}\label{sss:Hk}

Let $I$ be a finite set. Let $\Hk^{I}_{G,\bK}: \Sch_{k}\to \Gpd$ be the functor whose value at a $k$-scheme $S$ is the groupoid of tuples $\xi=((x_{i})_{i\in I}, \cE_{0},\cE_{1} ,\a)$ where
\begin{enumerate}
\item For each $i\in I$, $x_{i}: S\to U$ is a morphism with graph $\G(x_{i})\subset X\times S$. The $x_{i}$'s are called the {\em legs} of $\xi$.
\item $\cE_{0},\cE_{1}\in \Bun_{G,\bK}(S)$, i.e.,  $\cE_{0}$ and $\cE_{1}$ are  $G$-torsors over $X\times S$ with $\bK_{x}$-level structures along $\{x\}\times S$.
\item $\a$ is an isomorphism of $G$-torsors with $\bK$-level structures $\cE_{0}|_{X\times S-\cup_{i\in I}\G(x_{i})}\isom \cE_{1}|_{X\times S-\cup_{i\in I}\G(x_{i})}$.
\end{enumerate}

By definition, $\Hk_{G,\bK}^{I}$ is equipped with a map 
\begin{equation*}
\Hk^{I}_{G,\bK}\to U^{I}.
\end{equation*}

When $I=\vn$, $\Hk^{I}_{G,\bK}=\Bun_{G,\bK}$. When $\Sig=\vn$ (so $\bK$ is no data), we simply write $\Hk^{I}_{G,\bK}$ as $\Hk^{I}_{G}$.


\sss{Iterated version of Hecke stacks}\label{sss:it Hk}
Let $I=I_{1}\coprod \cdots\coprod I_{r}$ be a decomposition of a finite set $I$. We have a variant $\Hk^{(I_{1},\cdots, I_{r})}_{G,\bK}$ of the Hecke stack $\Hk^{I}_{G,\bK}$ by refining the isomorphism $\a$ into $r$ steps. An $S$-point of $\Hk^{(I_{1},\cdots, I_{r})}_{G,\bK}$ consists of tuples $((x_{i})_{i\in I}, (\cE_{j})_{0\le j\le r} ,(\a_{j})_{1\le j\le r})$ where
\begin{enumerate}
\item For each $i\in I$, $x_{i}: S\to U$ is a morphism with graph $\G(x_{i})\subset X\times S$.
\item $\cE_{0},\cdots, \cE_{r}\in \Bun_{G,\bK}(S)$.
\item For $1\le j\le r$, $\a_{j}$ is an isomorphism of $G$-torsors with $\bK$-level structures 
$$\a_{j}: \cE_{j-1}|_{X\times S-\cup_{i\in I_{j}}\G(x_{i})}\isom \cE_{j}|_{X\times S-\cup_{i\in I_{j}}\G(x_{i})}.$$
\end{enumerate}

Recording the bundle $\cE_{j}$ gives a morphism
\begin{equation*}
p_{j}: \Hk^{(I_{1},\cdots, I_{r})}_{G,\bK}\to \Bun_{G,\bK}, \quad j=0,1,\cdots, r.
\end{equation*}

Forgetting the intermediate bundles $\cE_{1},\cdots, \cE_{r-1}$ and taking the composition $\a=\a_{r}\c\cdots \c\a_{1}$, we get a map
\begin{equation*}
\pi^{(I_{1},\cdots, I_{r})}_{I}: \Hk^{(I_{1},\cdots, I_{r})}_{G,\bK}\to \Hk^{I}_{G,\bK}.
\end{equation*}
This map is an isomorphism over the open subscheme $U^{(I_{1},\cdots, I_{r})}_{\disj}\subset U^{I}$ consisting of $(x_{i})_{i\in I}$ such that for $i$ and $i'$ belonging to different parts $I_{j}$ and $I_{j'}$, the graphs of $x_{i}$ and $x_{i'}$ are disjoint.

More generally, if each $I_{j}$ has a decomposition $I_{j}=I_{j,1}\coprod \cdots\coprod I_{j,s_{j}}$, then $I$ has a finer decomposition $I=I_{1,1}\coprod \cdots\coprod I_{1,s_{1}}\coprod I_{2,1}\coprod\cdots\coprod I_{2,s_{2}}\coprod \cdots\coprod  I_{r,s_{r}}$. We denote this refinement by $I=I'_{1}\coprod \cdots\coprod I'_{s}$ (so $s=s_{1}+\cdots+s_{r}$), where the parts are listed in the same order as above. We have a map
\begin{equation*}
\pi^{(I'_{1},\cdots, I'_{s})}_{(I_{1},\cdots, I_{r})}: \Hk^{(I'_{1},\cdots, I'_{s})}_{G,\bK}\to \Hk^{(I_{1},\cdots, I_{r})}_{G,\bK}
\end{equation*}
by recording only the bundles $\cE_{0},\cE_{s_{1}},\cE_{s_{1}+s_{2}},\cdots, \cE_{s}$ and the compositions $\a_{j}:=\a_{j,s_{j}}\c\cdots\c\a_{j,1}: \cE_{j-1}\dr\cE_{j}$, for $1\le j\le r$. This map is an isomorphism over $\prod_{j=1}^{r}U^{(I_{j,1},\cdots, I_{j,s_{j}})}_{\disj}\subset \prod_{j=1}^{r}U^{I_{j}}=U^{I}$.

\sss{Affine Grassmannian}\label{sss:Gr} We refer the readers to X. Zhu's expository article \cite{Zhu-Satake} for more details on this topic. The affine Grassmannian $\Gr_{G}$ of $G$ is by definition the ind-scheme over $k$ that represents the fpqc sheafification of the functor $R\mapsto (LG)(R)/(L^{+}G)(R)=G(R\lr{t})/G(R\tl{t})$ on $k$-algebras. Alternatively, if we let $D_{R}=\Spec R\tl{t}$ and $D^{\times}_{R}=\Spec R\lr{t}$, then $\Gr_{G}(R)$ classifies pairs $(\cE, \io)$ where $\cE$ is a $G$-torsor over $D_{R}$ and $\io$ is a trivialization of $\cE|_{D^{\times}_{R}}$, i.e., an isomorphism $\io: \cE|_{D^{\times}_{R}}\isom \cE_{\triv}|_{D^{\times}_{R}}$, where $\cE_{\triv}$ is the trivial $G$-torsor over $D_{R}$.

Let $T\subset G$ be a split maximal torus with Weyl group $W$.  We recall that the $L^{+}G$-orbits on the affine Grassmannian  $\Gr_{G}$ are indexed by Weyl group orbits of coweights $\xcoch(T)/W$: for $\l\in \xcoch(T)$ we assign the $L^{+}G$-orbit containing $t^{\l}\in LT$, which depends only on the $W$-orbit of $\l$. 

We choose a Borel subgroup containing $T$ which defines dominant coweights $\xcoch(T)^{+}$. Then we have a canonical bijection $\xcoch(T)^{+}\isom \xcoch(T)/W$, which allows us to index any $L^{+}G$-orbits on $\Gr_{G}$ uniquely by an element in $\xcoch(T)^{+}$. For $\l\in \xcoch(T)^{+}$, let $\Gr_{G, \l}$ be the $L^{+}G$-orbit of $t^{\l}$, as a locally closed reduced subscheme. We have $\dim\Gr_{G,\l}=\j{2\r,\l}$, where $2\r$ is the sum of positive roots. 

Let $\le$ be the partial order on $\xcoch(T)^{+}$ defined as follows: $\l'\le \l$ if and only if $\l-\l'$ is a sum of positive coroots with respect to $B$.  It is known that the closure of $\Gr_{G,\l}$ is $\Gr_{G,\le \l}=\cup_{\l'\in\xcoch(T)^{+}, \l'\le \l}\Gr_{G,\l'}$.

For a closed point $x\in |X|$, we similarly define $\Gr_{G,x}$ to be the ind-scheme representing the fpqc sheafification of $R\mapsto \Gr_{G,x}(R)=(L_{x}G)(R)/(L^{+}_{x}G)(R)$. After choosing a uniformizer $\vp_{x}$ at $x$, we get an isomorphism $\Gr_{G,x}\cong \Gr_{G}\ot_{k}k_{x}$. As $x$ varies, $\Gr_{G,x}$ form a family over $X$, which is a special case of the Beilinson-Drinfeld Grassmannian that we recall now.

\begin{exam} When $G=\GL_{n}$, $\Gr_{G}(k)$ is the set of free $k\tl{t}$-submodules of rank $n$ inside $k\lr{t}^{\op n}$. For $\l=(d_{1},d_{2},\cdots, d_{n})$ such that $d_{1}\ge d_{2}\ge\cdots \ge d_{n}$ are integers, viewed as a dominant coweight of the diagonal torus $T\subset G$, the point $t^{\l}$ corresponds to the $k\tl{t}$-submodule 
\begin{equation*}
t^{d_{1}}k\tl{t}\op t^{d_{2}}k\tl{t}\op\cdots\op t^{d_{n}}k\tl{t}\subset k\lr{t}^{\op n}.
\end{equation*}

When $d_{n}\ge0$, $\Gr_{G,\l}(k)$ is the set of $k\tl{t}$-submodules $\L\subset k\tl{t}^{\op n}$ such that $k\tl{t}^{\op n}/\L\cong \op_{i=1}^{n}k\tl{t}/(t^{d_{i}})$ as $k\tl{t}$-modules. 

When $\l=(d,0,\cdots, 0)$ with $d\ge0$, $\Gr_{G,\le\l}(k)$ is the set of $k\tl{t}$-submodules $\L\subset k\tl{t}^{\op n}$ such that $\dim_{k}k\tl{t}^{\op n}/\L=d$ (not $\le d$).
\end{exam}

\sss{Beilinson-Drinfeld Grassmannian}\label{sss:BD}
Let $\cE_{\triv}\in \Bun_{G}(k)$ be the trivial $G$-torsor on $X$. Define the {\em Beilinson-Drinfeld Grassmannian} $\Gr^{(I_{1},\cdots, I_{r})}_{G}$ using the Cartesian diagram
\begin{equation*}
\xymatrix{\Gr^{(I_{1},\cdots, I_{r})}_{G}\ar[r]\ar[d] & \Hk^{(I_{1},\cdots, I_{r})}_{G}\ar[d]^{p_{r}} \\
\Spec k\ar[r]^-{\cE_{\triv}} & \Bun_{G} 
}
\end{equation*}
Then we have the leg map $\Gr^{(I_{1},\cdots, I_{r})}_{G}\to X^{I}$. When $I$ is the singleton set $\{1\}$,  the fiber of $\Gr^{\{1\}}_{G}\to X$ at a closed point $x\in |X|$ is canonically isomorphic to $\Gr_{G,x}$. 

\sss{The relative position map}
We can form the quotient stack $\QGr_{G}:=L^{+}G\bs \Gr_{G}$, which is a local version of $\Hk^{\{1\}}_{G}$:  for a $k$-algebra $R$, an $R$-point of $\QGr_{G}$ is a triple $(\cE_{0}, \cE_{1}, \a)$ where $\cE_{i}$ are $G$-torsors over $D_{R}=\Spec R\tl{t}$, and $\a: \cE_{0}|_{D^{\times}_{R}}\isom \cE_{1}|_{D^{\times}_{R}}$. The points of $\QGr_{G}$ are thus in bijection with $\xcoch(T)^{+}$. For $\l\in \xcoch(T)^{+}$, let $\QGr_{G, \le \l}$ be the image of $\Gr_{G,\le \l}$, which is a closed reduced substack of $\QGr_{G}$. 

Let $\Aut(D)$ be the pro-algebraic group over $k$ of automorphisms of $k\tl{t}$. Then there is a right action of $\Aut(D)$ on $\QGr_{G}$ by change of coordinates: $\th: D\to D$ sends $(\cE_{0},\cE_{1}, \a)$ to $(\th^{*}\cE_{0}, \th^{*}\cE_{1}, \th^{*}\a)$.  Clearly $\QGr_{G, \le\l}$ is invariant under the action of $\Aut(D)$.

For the Hecke stack $\Hk^{\{1\}}_{G,\bK}$, we define an evaluation map 
\begin{equation*}
\ev^{\{1\}}_{\Hk}: \Hk^{\{1\}}_{G,\bK}\to \QGr_{G}/\Aut(D)
\end{equation*}
as follows. It suffices to construct an $\Aut(D)$-torsor  $\wt\Hk^{\{1\}}_{G,\bK}\to \Hk^{\{1\}}_{G,\bK}$ together with an $\Aut(D)$-equivariant map $\wt\ev^{\{1\}}_{\Hk}: \wt\Hk^{\{1\}}_{G,\bK}\to \QGr_{G}$. We define $\wt\Hk^{\{1\}}_{G,\bK}$ to be the moduli stack whose $R$-points are $(x, t_{x}, \cE_{0}, \cE_{1}, \a)$ where $(x,\cE_{0},\cE_{1}, \a)\in\Hk^{\{1\}}_{G,\bK}(R)$,  and $t_{x}$ is a regular function on the formal completion of $X\ot R$ along $\G(x)$ that is a uniformizer of $\wh\cO_{X\times s, x_{s}}$ for every geometric point of $s\in \Spec R$. Then $t_{x}$ gives a continuous isomorphism $\wh \cO_{X\ot R, \G(x)}\cong R\tl{t}$, hence a map $t^{\sh}_{x}: \Spec R\tl{t}\to X\ot R$. The map $\wt\ev^{\{1\}}_{\Hk}$ then sends $(x, t_{x}, \cE_{0}, \cE_{1}, \a)$ to $(t^{\sh, *}_{x}\cE_{0}, t^{\sh, *}_{x}\cE_{1}, t^{\sh, *}_{x}\a)$.

\sss{Bounded Hecke stacks}\label{sss:bd Hk}
For $\l\in \xcoch(T)^{+}$, we define a closed substack
\begin{equation*}
\Hk^{\{1\}, \le \l}_{G,\bK}\subset \Hk^{\{1\}}_{G,\bK}
\end{equation*}
to be the preimage of $\QGr_{G,\le \l}/\Aut(D)\subset \QGr_{G}/\Aut(D)$ under the map $\ev^{\{1\}}_{\Hk}$ above. We informally say that an $S$-point $(x,\cE_{0},\cE_{1}, \a)$ of $\Hk^{\{1\}}_{G,\bK}$ lies in $\Hk^{\{1\}, \le \l}_{G,\bK}$ if and only if the relative position of $\a$ is $\le \l$.

Now let $I$ be a finite set with a decomposition $I=I_{1}\coprod  I_{2}\coprod \cdots\coprod  I_{r}$. Let $\un\l=(\l_{i})_{i\in I}\in (\xcoch(T)^{+})^{I}$. We would like to define a closed substack $\Hk^{(I_{1},\cdots, I_{r}), \le \un\l}_{G,\bK}$ of $\Hk^{(I_{1},\cdots, I_{r})}_{G,\bK}$. The construction is done in several steps:
\begin{enumerate}
\item Let $U^{I_{j}}_{\disj}\subset U^{I_{j}}$ be the open subset of $(x_{i})_{i\in I_{j}}$ such that $\G(x_{i})$ are disjoint. Then we have a map
\begin{equation}\label{ev disj}
\ev^{(I_{1},\cdots, I_{r})}_{\Hk,\disj}: \Hk^{(I_{1},\cdots, I_{r})}_{G,\bK}|_{U^{I_{1}}_{\disj}\times\cdots\times U^{I_{r}}_{\disj}}\to (\QGr_{G}/\Aut(D))^{I}
\end{equation}
whose $i$th factor records the relative position of $\a_{j}$ (where $i\in I_{j}$) along $\G(x_{i})$. We define 
\begin{equation*}
\Hk^{(I_{1},\cdots, I_{r}), \le \un\l}_{G,\bK, \disj}\subset \Hk^{(I_{1},\cdots, I_{r})}_{G,\bK}|_{U^{I_{1}}_{\disj}\times\cdots\times U^{I_{r}}_{\disj}}
\end{equation*}
to be the preimage of $\prod_{i\in I}\QGr_{G,\le \l_{i}}/\Aut(D)$ under $\ev^{(I_{1},\cdots, I_{r})}_{\Hk,\disj}$.

\item We then define $\Hk^{(I_{1},\cdots, I_{r}), \le \un\l}_{G,\bK}$ to be the closure of $\Hk^{(I_{1},\cdots, I_{r}), \le \un\l}_{G,\bK, \disj}$ in $\Hk^{(I_{1},\cdots, I_{r})}_{G,\bK}$. 
\end{enumerate}

We define the bounded version of the Beilinson-Drinfeld Grassmannian $\Gr^{(I_{1},\cdots, I_{r}), \le\un\l}_{G}$ to be the preimage of $\Hk^{(I_{1},\cdots, I_{r}), \le \un\l}_{G}$ under the canonical map $\Gr^{(I_{1},\cdots, I_{r})}_{G}\to \Hk^{(I_{1},\cdots, I_{r})}_{G}$.

\begin{exam} Let $G=\GL_{n}$, and $\l_{i}=(1,0,\cdots, 0)$ for all $i\in I$. 
\begin{enumerate}
\item Iterated version. Let $I=\{1,\cdots, r\}$ decomposed into singletons $I_{i}=\{i\}$. Then the $S$-points of $\Hk^{(\{1\},\cdots,\{ r\}),\le\un\l}_{G}$ are 
$$((x_{i})_{1\le i\le r}, (\cV_{j})_{0\le j\le r}, (\a_{j})_{1\le j\le r}),$$
where $\cV_{j}$ are rank $n$ vector bundles over $X\times S$, and $\a_{j}: \cV_{j-1}\incl \cV_{j}$ is an injection of coherent sheaves whose cokernel is locally free of rank one on the graph $\G(x_{j})$, for $1\le j\le r$.

\item Non-iterated version. The $S$-points of $\Hk^{I,\le\un\l}_{G}$ are $((x_{i})_{i\in I}, \cV_{0}, \cV_{1}, \a)$ where $\cV_{0}$ and $\cV_{1}$ are rank $n$ vector bundles over $X\times S$, and $\a: \cV_{0}\incl \cV_{1}$ is an injection of coherent sheaves such that the divisor of $\det(\a): \det(\cV_{0})\to \det(\cV_{1})$ is  $\sum_{i\in I}\G(x_{i})$ as a divisor of $X\times S$.
\end{enumerate}

\end{exam}

\begin{remark} Our notation differs from that of \cite[\S1]{VLaff}. The Hecke stack $\Hk^{(I_{1},\cdots, I_{r})}_{G,\bK}$ was denoted $\textup{Hecke}_{\bK,I}^{(I_{1},\cdots, I_{r})}$ in {\em loc.cit}.  In \cite[Definition 1.2]{VLaff}, two bounded versions of the iterated Hecke stack were introduced, which were denoted $\textup{Hecke}^{(I_{1},\cdots, I_{r})}_{\bK, I, \un\l}$ and $\textup{Hecke}^{(I_{1},\cdots, I_{r})}_{\bK, I, \lesssim\un\l}$.  It can be shown that our Hecke stack $\Hk^{(I_{1},\cdots, I_{r}),\le\un\l}_{G, \bK}$ is the reduced structure of $\textup{Hecke}^{(I_{1},\cdots, I_{r})}_{\bK, I, \un\l}$, which is in turn contained in $\textup{Hecke}^{(I_{1},\cdots, I_{r})}_{\bK, I, \lesssim\un\l}$. 
\end{remark}

\subsection{$G$-Shtukas}\label{ss:Sht}
For any scheme $Y$ over $k=\FF_{q}$, denote by $\Fr_{Y}$ or simply $\Fr:Y\to Y$ the Frobenius morphism that raises regular functions to its $q$th power.

Again we fix a finite set of places $\Sigma\subset |X|$ and level groups $\bK=(\bK_{x})_{x\in \Sig}$.  Let $U=X-\Sigma$.

\sss{Definition of $\Sht_{G}$}
Let $I$ be a finite set with a decomposition $I=I_{1}\coprod \cdots\coprod I_{r}$. Define an Artin stack $\Sht_{G,\bK}^{(I_{1},\cdots, I_{r})}$ by the Cartesian diagram
\begin{equation}\label{defn Sht}
\xymatrix{\Sht_{G,\bK}^{(I_{1},\cdots, I_{r})}\ar[r]\ar[d] & \Hk_{G,\bK}^{(I_{1},\cdots, I_{r})}\ar[d]^{(p_{0},p_{r})}\\
\Bun_{G,\bK}\ar[r]^-{(\id,\Fr)} & \Bun_{G,\bK}\times \Bun_{G,\bK}
}
\end{equation}

From the construction, we have the map recording the legs
\begin{equation*}
\pi:=\pi^{(I_{1},\cdots, I_{r})}_{G,\bK}: \Sht_{G,\bK}^{(I_{1},\cdots, I_{r})}\to U^{I}.
\end{equation*}

\sss{Functor of points}\label{sss:Sht points}
By definition, for any test scheme $S$ over $k$, the $S$-points of $\Sht_{G,\bK}^{I}$ is the groupoid of tuples
\begin{equation*}
\xi=((x_{i})_{i\in I}, \cE, \a: \cE\dr {}^{\t}\cE)
\end{equation*}
where
\begin{itemize}
\item For $i\in I$, $x_{i}\in U(S)$ with graph $\G(x_{i})\subset U\times S$. The $x_{i}$'s are called the {\em legs} of the Shtuka $\xi$.
\item $\cE$ is a $G$-torsor over $X\times S$ with  $\bK_{x}$-level structure along $\{x\}\times S$.
\item ${}^{\t}{\cE}=(\id_{X}\times \Fr_{S})^{*}\cE$.
\item $\a$ is an isomorphism of $G$-torsors $\cE|_{X\times S-\cup_{i\in I}\G(x_{i})}\isom ({}^{\t}{\cE})|_{X\times S-\cup_{i\in I}\G(x_{i})}$ respecting the $\bK$-level structures.
\end{itemize}

More generally, for a decomposition $I=I_{1}\coprod \cdots\coprod I_{r}$, the $S$-points of $\Sht_{G,\bK}^{(I_{1},\cdots, I_{r})}$ is the groupoid of tuples
\begin{equation*}
((x_{i})_{i\in I},(\cE_{j})_{0\le j\le r}, (\a_{j}: \cE_{j-1}\dr \cE_{j})_{1\le j\le r}, \io)
\end{equation*}
where
\begin{itemize}
\item For $i\in I$, $x_{i}\in U(S)$ with graph $\G(x_{i})\subset X\times S$.
\item For $0\le j\le r$, $\cE_{j}$ is a $G$-torsor over $X\times S$ with  $\bK_{x}$-level structure along $\{x\}\times S$.
\item For $1\le j\le r$, $\a_{j}$ is an isomorphism of $G$-torsors respecting the $\bK$-level structures
\begin{equation*}
\cE_{j-1}|_{X\times S-\cup_{i\in I_{j}}\G(x_{i})}\isom \cE_{j}|_{X\times S-\cup_{i\in I_{j}}\G(x_{i})}.
\end{equation*}
\item $\io$ is an isomorphism of $G$-torsors respecting the $\bK$-level structures.
\begin{equation*}
\io:\cE_{r}\isom {}^{\t}\cE_{0}=(\id_{X}\times \Fr_{S})^{*}\cE_{0}.
\end{equation*}
\end{itemize}

\begin{exam}\label{ex:Sht no leg}
When $I=\vn$, by definition $\Sht^{\vn}_{G,\bK}$ is the fiber product of the diagonal $\D: \Bun_{G,\bK}\to \Bun_{G,\bK}^{2}$ with the graph of Frobenius $(\id,\Fr): \Bun_{G,\bK}\to \Bun_{G,\bK}^{2}$.  As a general fact, for any Artin stack $\frX$ over $k$, the fiber product of $\D: \frX\to \frX^{2}$ with the graph of Frobenius $(\id,\Fr):\frX\to \frX^{2}$ is isomorphic to the groupoid $\frX(k)$, viewed as a discrete stack over $k$. In our case we have 
\begin{equation*}
\Sht^{\vn}_{G,\bK}=\coprod_{\cE\in \Bun_{G,\bK}(k)}[(\Spec k)/\Aut(\cE)(k)].
\end{equation*}
\end{exam}

\sss{Bounded version}
Let $\un\l=(\l_{i})_{i\in I}\in (\xcoch(T)^{+})^{I}$. We define a bounded version $\Sht^{(I_{1},\cdots, I_{r}), \le \un\l}_{G,\bK}$ of $\Sht_{G,\bK}^{(I_{1},\cdots, I_{r})}$ using the Cartesian diagram
\begin{equation*}
\xymatrix{\Sht_{G,\bK}^{(I_{1},\cdots, I_{r}),\le\un\l}\ar[r]\ar[d] & \Hk_{G,\bK}^{(I_{1},\cdots, I_{r}),\le\un\l}\ar[d]^{(p_{0},p_{r})}\\
\Bun_{G,\bK}\ar[r]^-{(\id,\Fr)} & \Bun_{G,\bK}\times \Bun_{G,\bK}
}
\end{equation*}
Then $\Sht^{(I_{1},\cdots, I_{r}), \le \un\l}_{G,\bK}$ is a closed substack of $\Sht_{G,\bK}^{(I_{1},\cdots, I_{r})}$.

\begin{remark} Our notation differs from that of \cite[Definition 2.1]{VLaff}. The stack $\Sht^{(I_{1},\cdots, I_{r})}_{G,\bK}$ is denoted $\textup{Cht}_{\bK,I}^{(I_{1},\cdots, I_{r})}$ in {\em loc.cit}. For the bounded version, the stack $\textup{Cht}_{\bK,I, \un\l}^{(I_{1},\cdots, I_{r})}$ was introduced in {
\em loc. cit}, and there is a closed embedding $\Sht^{(I_{1},\cdots, I_{r}), \le \un\l}_{G,\bK}\incl \textup{Cht}_{\bK,I, \un\l}^{(I_{1},\cdots, I_{r})}$ that induces an isomorphism after taking reduced structures.
\end{remark}

\begin{exam}\label{ex:Gm} Let $G=\GL_{1}=\Gm$ and $I$ be a finite set . A bound in this case is given by a sequence of integers $\un\l:=(\l_{i})_{i\in I}\in \ZZ^{I}$. Note that the partial order on $\ZZ^{I}$ is trivial: $\l'\le \l$ if and only if $\l'=\l$. The stack $\Sht^{I, \le \un\l}_{\Gm}$ admits an alternative description: it fits into a Cartesian diagram
\begin{equation*}
\xymatrix{\Sht^{I, \le \un\l}_{\Gm} \ar[d]\ar[r] & \Pic_{X}\ar[d]^{\bL_{\Pic_{X}}}\\
X^{I}\ar[r]^{ \AJ_{\un\l}} &  \Pic_{X} 
}
\end{equation*}
Here $\bL_{\Pic_{X}}$ is the Lang isogeny for $\Pic_{X}$ sending a line bundle $\cL$ to ${}^{\t}\cL\ot \cL^{-1}$. The Abel-Jacobi map $\AJ_{\un\l}$ sends $(x_{i})_{i\in I}$ to the line bundle $\cO_{X}(\sum_{i\in I}\l_{i}x_{i})$.

Since $\bL_{\Pic_{X}}$ has image in $\Pic^{0}_{X}$ while $\AJ_{\un\l}$ lands in $\Pic^{|\un\l|}_{X}$ where $|\un\l|=\sum_{i\in I}\l_{i}$, we see that $\Sht^{I, \le \un\l}_{G} =\vn$ if $|\un\l|\ne0$.
\end{exam}

\begin{exam}\label{ex:Dr} Let $G=\GL_{n}$, and let $D=\sum_{x\in \Sig}m_{x}x$ be an effective divisor on $X$. Let $\bK_{D}$ be the collection of principal congruence subgroups $\bK_{x,m_{x}}$ for $x\in \Sig$. Let $I=\{1,2\}$ with the decomposition into $I_{1}=\{1\}$ and $I_{2}=\{2\}$. Let $\l_{1}=(0,\cdots, 0,-1)$ and $\l_{2}=(1,0,\cdots,0)$ be two minuscule coweights, and $\un\l=(\l_{1},\l_{2})$. Then $\Sht^{(I_{1},I_{2}), \le\un\l}_{G,\bK_{D}}(S)$ classifies the following data
\begin{equation*}
\xymatrix{\cV_{0} & \ar[l]_-{\a_{1}}\cV_{1}  \ar[r]^-{\a_{2}} & \cV_{2} \ar[r]^{\io}_{\sim} &  {}^{\t}\cV_{0}}
\end{equation*}
where $\cV_{i}$ are vector bundles of rank $n$ over $X\times S$ with trivializations along $D\times S$, $\a_{1}$ and $\a_{2}$ are injective maps of coherent sheaves compatible with the trivializations along $D\times S$, whose cokernels are locally free over graphs of maps $x_{1}: S\to U=X-\Sig$ and $x_{2}: S\to U$ respectively.  This is the kind of moduli stack originally considered by Drinfeld in \cite{Dr-FBun}.
\end{exam}

\sss{Admissibility}
Let $\Om=\xcoch(T)/\ZZ\Phi^{\vee}$ where $\ZZ\Phi^{\vee}$ is the coroot lattice. Then $\Om$ is the group of connected components of $\Gr_{G}$ (and of $\QGr_{G}$). The set of connected components $\pi_{0}(\Bun_{G})$ can also be canonically identified with $\Om$, by \cite[Lemma 2.2]{Var}. More generally, if each $\bK_{x}$ is contained in a parahoric subgroup of $L_{x}G$, then $\pi_{0}(\Bun_{G,\bK})$ can again be canonically identified with $\Om$, with the component of the trivial bundle  corresponding to $0\in \Om$.

Let $\un\l=(\l_{i})_{i\in I}\in (\xcoch(T)^{+})^{I}$, and $\om$ be the image of $\sum_{i\in I}\l_{i}\in \xcoch(T)$ in $\Om$. We call $\un\l$ {\em $G$-admissible} if $\om=0\in \Om$ (see \cite[Definition 2.14]{Var}).

\begin{lemma}[Varshavsky {\cite[Proposition 2.16(d)]{Var}}] Suppose $\bK_{x}$ is contained in a parahoric subgroup for each $x\in \Sig$. Let $I=I_{1}\coprod\cdots\coprod I_{r}$ be a decomposition of a finite set $I$, and $\un\l=(\l_{i})_{i\in I}\in (\xcoch(T)^{+})^{I}$.

Then $\Sht^{(I_{1},\cdots, I_{r}), \le\un\l}_{G,\bK}$ is non-empty if and only if $\un\l$ is $G$-admissible.
\end{lemma}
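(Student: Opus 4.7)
The statement is an equivalence; I treat the two directions separately.

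For necessity ($\Rightarrow$), I would argue by tracing connected components. Under the hypothesis that each $\bK_x$ is contained in a parahoric subgroup, one has the canonical identification $\pi_0(\Bun_{G,\bK}) = \Omega$, and Frobenius acts trivially on this discrete group. Each Hecke step $\alpha_j: \cE_{j-1}\dr\cE_j$ appearing in a Shtuka, being bounded by $(\lambda_i)_{i\in I_j}$, shifts the class of the underlying $G$-torsor in $\Omega$ by $\sum_{i\in I_j}\bar\lambda_i$; this is because $\pi_0(\QGr_G) = \Omega$ with $\QGr_G^{\leq\lambda}$ lying in the $\bar\lambda$-component, combined with compatibility of the evaluation map $\ev_{\Hk}^{(I_j)}$ via the factorization structure. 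Telescoping the $r$ modifications yields $[\cE_r]-[\cE_0] = \omega$ in $\Omega$, while the Shtuka isomorphism $\iota:\cE_r\isom{}^\tau\cE_0$ forces $[\cE_r] = [\cE_0]$. Hence $\omega = 0$.

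For sufficiency ($\Leftarrow$), assume $\omega = 0$. The plan is to construct an explicit Shtuka over a finite extension $\FF_{q^m}$ of $k$. First, for $m$ large enough, fix legs $(x_i)_{i\in I}\in U^I(\FF_{q^m})$ and a base bundle $\cE_0 \in \Bun_{G,\bK}(\FF_{q^m})$ (available by Lang--Weil estimates together with Lang's theorem applied to the identity components of $\Bun_{G,\bK}$). By Lang's theorem we may also choose an isomorphism $\iota: \cE_0\isom{}^\tau\cE_0$. It then remains to produce a modification chain $\cE_0\dr\cE_1\dr\cdots\dr\cE_r$ at the legs with the prescribed bounds together with an isomorphism $\cE_r\cong\cE_0$ compatible with $\iota$. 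Translated via the Beilinson-Drinfeld Grassmannian at $(x_i)$, I need an $\FF_{q^m}$-point of $\Gr^{(I_1,\ldots,I_r),\leq\un\l}_{G,(x_i)}$ whose terminal $G$-torsor maps to $[\cE_0] \in \Bun_{G,\bK}$ under the evident projection.

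The main obstacle is exhibiting such a point. Geometrically, I would argue that the image of $\prod_{i}\Gr^{\leq\lambda_i}_{G,x_i}$ under the natural map $\prod_i\Gr_{G,x_i}\to\Bun_{G,\bK}$ meets the class $[\cE_0]$ precisely when $\omega = 0$: the image automatically lies in the component $[\cE_0]+\omega$ by the necessity argument, so $\omega=0$ is necessary, and the matching sufficiency requires showing that the product of nontrivial Schubert cells covers the whole component. For simply connected $G$ this is a consequence of strong approximation (the image of $G(F)$ in $\prod_i\Gr_{G,x_i}$ is dense in the appropriate sense), and the general split reductive case follows by descending through $G^{\sc}\to G$ while tracking the $\Omega$-obstruction. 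Alternatively, Varshavsky's argument in \cite{Var} avoids strong approximation by applying a Lang-type theorem directly to the fiber product defining $\Sht^{(I_1,\ldots,I_r),\leq\un\l}_{G,\bK}$, using the smoothness and surjectivity of the projections $p_0, p_r$ from the Hecke stack onto the appropriate union of components of $\Bun_{G,\bK}$.
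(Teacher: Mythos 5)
Your necessity argument is essentially the paper's: both trace the class in $\Om\cong\pi_{0}(\Bun_{G,\bK})$ through the modification chain, use that a modification bounded by $\l_{i}$ shifts the component by the image of $\l_{i}$, and note that $\io:\cE_{r}\isom{}^{\t}\cE_{0}$ forces the total shift $\om$ to vanish because Frobenius preserves each component. That direction is fine.

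The sufficiency direction has a genuine gap. You fix distinct legs $(x_{i})$ and a bundle $\cE_{0}$, and then need a chain of modifications bounded by $\un\l$ carrying $\cE_{0}$ to ${}^{\t}\cE_{0}$. The set of bundles reachable from $\cE_{0}$ is the image in $\Bun_{G,\bK}$ of a convolution Schubert variety, which is a \emph{closed substack of finite type}; it can never cover a whole connected component of $\Bun_{G,\bK}$ (which is not of finite type), so the assertion that ``the product of nontrivial Schubert cells covers the whole component'' is false on dimension grounds, and strong approximation for $G^{\sc}$ does not rescue it: density of $G(F)$ in the adelic group says nothing about hitting a prescribed isomorphism class by modifications of \emph{bounded} relative position at \emph{finitely many prescribed} points. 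Note also that with distinct legs you cannot fall back on the trivial modification, since individual $\l_{i}$ need not satisfy $\l_{i}\ge 0$ (e.g.\ $\l_{1}=(0,\dots,0,-1)$ in Example \ref{ex:Dr}). The paper's proof sidesteps all of this by specializing the legs to the diagonal: admissibility gives that $\l=\sum_{i}\l_{i}$ lies in the coroot lattice and is dominant, hence $0\le\l$; the factorization structure of \S\ref{ss:geom fact} identifies $\Sht^{I,\le\un\l}_{G,\bK}|_{\D(U)}$ with $\Sht^{\{1\},\le\l}_{G,\bK}$, which contains $\Sht^{\{1\},\le 0}_{G,\bK}=\Sht^{\vn}_{G,\bK}\times U$, and the latter is non-empty by Example \ref{ex:Sht no leg} (a $k$-rational bundle with $\io=\id$). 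If you want to keep your construction, you must either make this reduction to one leg, or replace the coverage claim by an actual Lang-type theorem for the self-correspondence $(p_{0},p_{r})$ restricted to $\Hk^{(I_{1},\dots,I_{r}),\le\un\l}_{G,\bK}$ — which is a nontrivial input you would need to state and justify, not merely allude to.
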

\begin{proof}[Sketch of proof]

For $\g\in \Om\cong \pi_{0}(\Bun_{G,\bK})$, let $\Bun_{G, \bK}^{\g}$ be the corresponding  connected component of $\Bun_{G,\bK}$. Restricting the maps $(p_{0},p_{r}): \Hk^{(I_{1},\cdots, I_{r})}_{G,\bK}\to \Bun_{G,\bK}\times\Bun_{G,\bK}$ to $\Hk^{(I_{1},\cdots, I_{r}), \le\un\l}_{G,\bK}$, we have
\begin{equation*}
(p^{\le\un\l}_{0},p^{\le\un\l}_{r}):  \Hk^{(I_{1},\cdots, I_{r}), \le\un\l}_{G,\bK}\to \coprod_{\g\in \Om} \Bun_{G,\bK}^{\g}\times \Bun^{\g+\om}_{G,\bK} 
\end{equation*}
where $\om$ is the image of $\sum\l_{i}$ in $\Om$. 

If $\om\ne0$, the image of $(p^{\le\un\l}_{0},p^{\le\un\l}_{r})$ does not intersect $\Bun_{G,\bK}^{\g}\times \Bun^{\g}_{G,\bK}$ for any $\g\in \Om$. However, the graph of Frobenius on $\Bun_{G,\bK}$ lies over  $\coprod_{\g\in \Om} \Bun_{G,\bK}^{\g}\times \Bun^{\g}_{G,\bK} 
$. By the Cartesian diagram \eqref{defn Sht}, we see $\Sht^{(I_{1},\cdots, I_{r}), \le\un\l}_{G,\bK}=\vn$.

Conversely, assume $\un\l$ is $G$-admissible. Since $\Hk^{(I_{1},\cdots, I_{r}),\le\un\l}_{G, \bK}\to \Hk^{I,\le\un\l}_{G,\bK}$ is surjective, so is the map $\Sht^{(I_{1},\cdots, I_{r}),\un\l}_{G, \bK}\to \Sht^{I,\le\un\l}_{G,\bK}$. Therefore it suffices to show that $\Sht^{I,\le\un\l}_{G,\bK}\ne\vn$. 

Let $\l=\sum_{i}\l_{i}$. Since $\un\l$ is admissible, the dominant coweight $\l$ lies in the coroot lattice of $G$, hence $0\le \l$. Restricting $\Sht^{I,\le\un\l}_{G,\bK}$ to the diagonal $\D(U)\subset U^{I}$, we have $\Sht^{I,\le\un\l}_{G,\bK}|_{\D(U)}\cong \Sht^{\{1\},\le\l}_{G,\bK}$. This will be discussed in \S\ref{ss:geom fact}. Therefore we only need to show $\Sht^{\{1\},\le\l}_{G,\bK}\ne\vn$. Since $0\le \l$, we have $\Sht^{\{1\},\le0}_{G,\bK}\subset \Sht^{\{1\},\le\l}_{G,\bK}$. Clearly, $\Sht^{\{1\},\le0}_{G,\bK}=\Sht^{\vn}_{G,\bK}\times U$, which is non-empty by Example \ref{ex:Sht no leg}.
\end{proof}

\subsection{Iso-Shtukas}

Recall $F$ is the function field of $X$.

\sss{The tensor category of iso-Shtukas}
Let $\IsoSht(F)$ be the category of pairs $(V, \phi)$ where
\begin{itemize}
\item $V$ is a finite-dimensional $\br F:=F\ot_{k} \ov k$-vector space.
\item $\phi: V\isom V$ is an $F$-linear and $(\ov k, \Fr)$-linear isomorphism. I.e., it is bijective and for $v\in V$, $c\in \ov k$ and $a\in F$, we have 
$\phi(cv)=c^{q}\phi(v)$ and $\phi(av)=a\phi(v)$. 
\end{itemize}
We call objects in $\IsoSht(F)$ {\em iso-Shtukas} for the function field $F$. The rank of $(V,\phi)\in \IsoSht(F)$ is defined to be $\dim_{\br F}V$. Then $\IsoSht(F)$ is a rigid $F$-linear tensor category under the operation $(V_{1},\phi_{1})\ot (V_{2}, \phi_{2})=(V_{1}\ot_{\br F} V_{2}, \phi_{1}\ot\phi_{2})$. The dual object of  $(V,\phi)$ is $(V^{\vee}=\Hom_{\br F}(V,\br F), \phi^{\vee})$ where $\phi^{\vee}$ is defined by $\j{\phi^{\vee}(\xi),\phi(v)}=\j{\xi, v}$ for $\xi\in V^{\vee}, v\in V$.

We should think of iso-Shtukas over $F$ as Shtukas over the generic point of $X$. More precisely, to any $\ov k$-point $((x_{i})_{i\in I}, \cE, \a)\in \Sht^{I}_{\GL_{n},\bK}(\ov k)$, there corresponds an iso-Shtuka defined as follows. Let $V$ be the  fiber of $\cE$ over the generic point $\y=\Spec \br F$ of $X_{\ov k}$. Then $\a$ restricted to $\y$ gives an $\br F$-linear isomorphism $\a: V\isom V\ot_{\ov k,\Fr}\ov k$. We define $\phi(v)=\a^{-1}(v\ot 1)$ for $v\in V$.  Then $(V,\phi)$ is an object in $\IsoSht(F)$ of rank $n$.

For a reductive group $G$ over $F$, we define a {\em $G$-iso-Shtuka} to be an $F$-linear tensor functor
\begin{equation}\label{GisoSh}
\Rep_{F}(G)\to \IsoSht(F).
\end{equation}
If $G$ is split, a $\ov k$-point $((x_{i})_{i\in I}, \cE, \a)\in \Sht^{I}_{G,\bK}(\ov k)$ gives rise to a $G$-iso-Shtuka as follows: it sends a finite-dimensional $F$-representation $W$ of $G$ to the iso-Shtuka $(W_{\cE,\y}, \phi_{W})$ where $W_{\cE}=(\cE|_{U_{\ov k}})\times^{G}W$ is the vector bundle over $U_{\ov k}$ associated to $\cE$ and $W$, and $W_{\cE,\y}$ is its generic fiber; and $\phi_{W}$ is the inverse of the restriction of $\a_{W}: W_{\cE}\dr {}^{\t}(W_{\cE})\cong W_{{}^{\t}\cE}$ at the generic point $\y$ of $X_{\ov k}$.  

\sss{Isocrystals} It is clear that iso-Shtukas are global analogs of isocrystals. Let $x\in |X|$. Let $\br F_{x}:=\wh{F_{x}^{\ur}}$ be the completion of the maximal unramified extension of $F_{x}$. Let $\Fr_{x}\in\Aut(\br F_{x})$ be the lifting of the automorphism $a\mapsto a^{\# k_{x}}$ of $k_{x}$. Recall that the category $\IsoCrys(F_{x})$ of isocrystals over $\br F_{x}$ with respect to $\Fr_{x}$ consists of pairs $(V, \phi)$ where 
\begin{itemize}
\item $V$ is a finite-dimensional $\br F_{x}$-vector space.
\item $\phi: V\isom V$ is a $(\br F_{x}, \Fr_{x})$-linear isomorphism. I.e., it is bijective, and for $v\in V$,  $a\in \br F_{x}$, we have 
$\phi(av)=\Fr_{x}(a)\phi(v)$.
\end{itemize}
Then $\IsoCrys(F_{x})$ is an $F_{x}$-linear rigid tensor category. 

Given an iso-Shtuka $(V,\phi)$ and a place $x\in|X|$, we may form an isocrystal over $\br F_{x}$ by extension of scalars
\begin{equation*}
(V,\phi)\mapsto (V\ot_{\br F}\br F_{x}, \phi^{d_{x}}\ot \Fr_{x}).
\end{equation*}
Here $d_{x}=[k_{x}:k]$. The above construction gives a $F$-linear tensor functor
\begin{equation}\label{Sht fib functor}
\om_{x}: \IsoSht(F)\to \IsoCrys(F_{x}).
\end{equation}

\sss{Classification of simple objects}
Recall the Dieudonn\'e-Manin classification for isocrystals: $\IsoCrys(F_{x})$ is a semisimple $F_{x}$-linear tensor category; the simple objects are classified by slopes $\l\in \QQ$. The simple object $M(\l)\in \IsoCrys(F_{x})$ of slope $\l=m/n$ ($n\ge1$, $m\in \ZZ$ prime to $n$) has dimension $n$ over $\br F_{x}$. For $\l_{1}, \l_{2}\in \QQ$, $M(\l_{1})\ot M(\l_{2})$ is isomorphic to a direct sum of $M(\l_{1}+\l_{2})$. The dual of $M(\l)$ is isomorphic to $M(-\l)$. The endomorphism algebra of $M(\l)$ is a central division algebra over $F_{x}$ with invariant $-\l\mod\ZZ\in \QQ/\ZZ$.

Drinfeld \cite[\S2]{Dr-Pet} describes the structure of $\IsoSht(F)$ as an abelian category. To state it, let $F^{\sep}$ be a separable closure of $F$, and consider the $\QQ$-vector space $F^{\sep,\times}\ot_{\ZZ}\QQ$ (usually simplified as $F^{\sep,\times}\ot\QQ$) with the action of $\G_{F}:=\Gal(F^{\sep}/F)$. To each element  $a\in F^{\sep,\times}\ot\QQ$, there is a unique smallest finite extension $L_{a}/F$ inside $F^{\sep}$ such that $a$ is in the image of $L^{\times}_{a}\ot\QQ\to F^{\sep,\times }\ot\QQ$.  

\begin{theorem}[Drinfeld \cite{Dr-Pet}]\label{th:isoSht}
\begin{enumerate}
\item The category $\IsoSht(F)$ is semisimple.
\item Isomorphism classes of simple objects in $\IsoSht(F)$ are in natural bijection with $\G_{F}$-orbits on $F^{\sep,\times}\ot\QQ$.
\item Let $(V,\phi)$ be a simple object in $\IsoSht(F)$ with isomorphism class corresponds to the $\G_{F}$-orbit of $a\in F^{\sep,\times}\ot\QQ$. Define $L_{a}$ as above. Write $\div(a)=\sum_{y\in |L_{a}|}\ord_{y}(a)\cdot y$ (sum over places of $L_{a}$). Then 
\begin{enumerate}
\item $\End(V,\phi)$ is a central division algebra over $L_{a}$ with invariant $-\ord_{y}(a)[k_{y}:k]\mod \ZZ\in \QQ/\ZZ$ at $y\in |L_{a}|$. 
\item $\dim_{\br F}V=[L_{a}:F]d_{a}$, where $d_{a}\in \NN$ is the least common denominator of the set of rational numbers $\{\ord_{y}(a)[k_{y}:k]\}_{y\in |L_{a}|}$.
\item Since $L_{a}\subset \End(V,\phi)$, we may view $(V,\phi)$ as an object in $\IsoSht(L_{a})$, and it makes sense to localize $(V,\phi)$ at a place  $y\in |L_{a}|$ to obtain $\om_{y}(V,\phi)\in \IsoCrys(L_{a,y})$. Then $\om_{y}(V,\phi)$ is a direct sum of simple isocrystals of slope $\ord_{y}(a)$.
\end{enumerate}

\end{enumerate}
\end{theorem}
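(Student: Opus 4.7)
The plan is to follow Drinfeld's original argument \cite{Dr-Pet}, in three stages. For semisimplicity (1), I would show that every short exact sequence $0\to (V',\phi')\to (V,\phi)\to (V'',\phi'')\to 0$ in $\IsoSht(F)$ splits. Pick an $\br F$-linear section $s_{0}:V''\to V$; the $\phi$-equivariance of $s_{0}$ becomes a Lang-type equation in the $\br F$-vector space $\Hom_{\br F}(V'',V')$, which carries a canonical $\sigma$-semilinear structure compatible with the extension of scalars from $F$. Applying Hilbert 90 to the profinite Galois group $\Gk=\Gal(\br F/F)$ (topologically generated by $\sigma=\id_{F}\ot\Fr$) acting on this finite-dimensional $\br F$-vector space produces a solution, yielding the desired splitting.

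For (2), given a simple iso-Shtuka $(V,\phi)$, Schur's lemma combined with the semisimplicity just established shows that $D:=\End_{\IsoSht(F)}(V,\phi)$ is a finite-dimensional central division algebra over its center $L$, a finite extension of $F$. One then views $(V,\phi)$ naturally as an iso-Shtuka over $L$, still simple. The key step is to exhibit a canonical scalar: for a suitable integer $N$ divisible by both $\dim_{\br L}V$ and $[L:F]$, the iterated Frobenius $\phi^{N}$ acts on the one-dimensional iso-Shtuka $\det V$ over $L$ as multiplication by a well-defined element $c_{V}\in L^{\times}$. Choose any $N$-th root $a$ of $c_{V}$ in $F^{\sep,\times}\ot\QQ$; the $\G_{F}$-orbit of $a$ is well-defined, and its minimal field of definition is exactly $L_{a}=L$. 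Conversely, starting from $a$ with minimal extension $L_{a}$ and presentation $a=c^{1/N}$ for $c\in L_{a}^{\times}$, one reconstructs the simple iso-Shtuka from the rank-one object $(\br L_{a},\,c\cdot\sigma)$ over $L_{a}$ by an $N$-th root construction (a cyclic covering of $\Spec L_a$) followed by induction up to $F$.

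For (3), I would apply the fiber functor $\om_{y}$ of \eqref{Sht fib functor} at each place $y$ of $L_{a}$ and invoke the Dieudonn\'e--Manin classification. Since $\phi^{N}$ acts by the scalar $c_{V}=a^{N}$, whose $y$-adic valuation is $N\ord_{y}(a)$, and the fiber functor replaces $\phi$ by $\phi^{d_{y}}$ to align it with $\Fr_{y}$, the localization $\om_{y}(V,\phi)$ must be isotypic of slope $\ord_{y}(a)$. The Brauer invariant of $D$ at $y$ is then $-\ord_{y}(a)[k_{y}:k]\mod\ZZ$, by the classical description of endomorphism algebras of isotypic isocrystals together with the normalization factor $d_{y}=[k_{y}:k]$ relating local slopes to Brauer invariants over the function field $L_{a}$. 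The dimension formula $\dim_{\br F}V=[L_{a}:F]\,d_{a}$ then follows by counting multiplicities of the local simple isocrystals, with $d_{a}$ being the global index of $D$.

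The technical heart of the argument is the scalarity statement used in step (2): that $\phi^{N}$ acts as a scalar on $\det V$ taking its value in $L^{\times}$ rather than merely $\br L^{\times}$. This is the global counterpart of the Dieudonn\'e--Manin classification, and requires a delicate interplay between the $\sigma$-semilinearity of $\phi$, the descent from $\br L$ to $L$ afforded by the endomorphism algebra $D$, and a rationality argument for Frobenius eigenvalues. Once this is established, the rest of the proof is standard Galois cohomology together with local-global considerations.
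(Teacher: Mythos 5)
The paper itself contains no proof of this theorem: it is quoted from Drinfeld, and the only argument supplied in the text is the construction of the map in one direction of part (2) --- descend $V$ to $V_{n}$ over $F\ot_{k}k_{n}$ and set $a=\l^{1/n}$ for $\l$ an eigenvalue of the linear map $\phi_{n}^{n}$. Your determinant-based invariant is a variant of this: $\det V$ only records the product of those eigenvalues, so it pins down $a$ only after one already knows that the eigenvalues of $\phi_{N}^{N}$ on a simple object form a single Galois orbit up to roots of unity, which is itself part of what must be proven. Up to that caveat, your outline of (2) and (3) is compatible with Drinfeld's argument, though the injectivity of the classification (two simples with the same orbit of $a$ are isomorphic) and the actual construction of a simple object of dimension $[L_{a}:F]d_{a}$ realizing a given $a$ (your ``$N$-th root construction followed by induction'') are only named, not carried out.

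The genuine gap is in step (1). Splitting the extension reduces, as you say, to solving $h-\Phi(h)=c$ in $\Hom_{\br F}(V'',V')$, i.e.\ to the surjectivity of $\id-\Phi$ on the internal Hom object; but that surjectivity is precisely the statement $\Ext^{1}((V'',\phi''),(V',\phi'))=0$, which is equivalent to semisimplicity, and it is not an instance of Hilbert 90. Here $\Phi$ is an arbitrary $\sigma$-semilinear bijection of a finite-dimensional $\br F$-vector space, not the canonical Galois action on $\br F=F\ot_{k}\ov k$ (genuine continuous semilinear actions of $\Gal(\ov k/k)$ would, by descent, produce only $F$-vector spaces, whereas $\IsoSht(F)$ is much larger), and the relevant $H^{1}$ of $\hat\ZZ$ with these coefficients is exactly $\coker(\id-\Phi)$, which has no a priori reason to vanish. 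One must actually prove the surjectivity: descend $(W,\Phi)$ to $F\ot_{k}k_{n}$, split $W$ according to whether the eigenvalues of the linear operator $\Phi^{n}$ are roots of unity (on the complementary part $\id-\Phi$ is already bijective, since $\det(\id-\Phi)$ on $F\ot_{k}k_{nm}$ is a product of terms $1-\mu$ with $\mu^{nm}$ an eigenvalue of $\Phi^{nm}$), and on the root-of-unity part use a characteristic-$p$ trace argument: the obstruction to solving over $F\ot_{k}k_{n}$ is a trace to $F$, which is killed after passing to $F\ot_{k}k_{np}$ because $\Tr_{F\ot k_{np}/F\ot k_{n}}$ multiplies by $p=0$. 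Without some such argument your step (1) is circular, since the vanishing you invoke is the theorem.
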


We give the construction in one direction of part (2) in the Theorem. From a simple object $(V,\phi)\in \IsoSht(F)$, we obtain $a\in (F^{\sep,\times}\ot\QQ)/\G_{F}$ as follows. There exists a finite extension $k_{n}/k$ and a descent $V_{n}$ of   $V$ over $F\ot_{k}k_{n}$ such that $\phi$ restricts to a $\id_{F}\ot \Fr_{k_{n}/k}$-linear map $\phi_{n}: V_{n}\to V_{n}$. We define $a$ to be the Galois orbit of $\l^{1/n}$, where $\l\in F^{\sep,\times}$ is any eigenvalue of the $F\ot_{k}k_{n}$-linear map $(\phi_{n})^{n}$. Changing the choices of $n$, the descent $V_{n}$ and the eigenvalue $\l$ of $(\phi_{n})^{n}$, the element $\l^{1/n}\in F^{\sep, \times}$ only gets multiplied by a root of unity acted by $\G_{F}$. We thus get a well-defined $\G_{F}$-orbit in $F^{\sep,\times}\ot\QQ$.

\sss{The tensor structure} To describe the tensor structure on $\IsoSht(F)$, we consider the following function
\begin{equation*}
\chi: K_{0}(\IsoSht(F))\to \ZZ[F^{\sep, \times}\ot\QQ]^{\G_{F}}
\end{equation*}
sending a simple object $(V,\phi)$ corresponding to the Galois orbit of $a\in F^{\sep,\times}\ot\QQ$ to the function $\chi(V,\phi): F^{\sep, \times}\ot\QQ\to \ZZ$ that is supported on the $\G_{F}$-orbit of $a$, constant there, and with total sum equal to $\dim_{\br F} V$. 

The following proposition describes how simple objects in $\IsoSht(F)$ tensors with each other.
\begin{prop}\label{p:IsoSht tensor} The map $\chi$ is an injective ring homomorphism.
\end{prop}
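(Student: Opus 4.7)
The plan is to combine the classification of simple iso-Shtukas in Theorem~\ref{th:isoSht} with the eigenvalue recipe described just after it, applied to a common descent of the two tensor factors.

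Injectivity and additivity come first. By Theorem~\ref{th:isoSht}(1)--(2), $K_{0}(\IsoSht(F))$ is the free abelian group on $\G_{F}$-orbits in $F^{\sep,\times}\ot\QQ$. By construction $\chi$ sends the simple object indexed by an orbit $O$ to a positive integer multiple of the characteristic function $\one_{O}$, and characteristic functions of distinct orbits are $\ZZ$-linearly independent in $\ZZ[F^{\sep,\times}\ot\QQ]^{\G_{F}}$, giving injectivity. Additivity on direct sums is immediate from the definition, and $\chi$ sends the unit iso-Shtuka to $\one_{\{1\}}$, the identity of the group ring.

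For multiplicativity, by $\ZZ$-bilinearity it suffices to verify $\chi((V_{1},\phi_{1})\ot (V_{2},\phi_{2}))=\chi(V_{1},\phi_{1})\cdot\chi(V_{2},\phi_{2})$ when both factors are simple. I would choose $n\ge 1$ large enough that both $\phi_{1}$ and $\phi_{2}$ descend to $F$-linear, $\Fr_{k_{n}/k}$-semilinear maps $\phi_{i,n}$ on a descent $V_{i,n}$ over $F_{n}:=F\ot_{k}k_{n}$. Then $(\phi_{i,n})^{n}$ is $F_{n}$-linear, and the eigenvalue recipe identifies $\chi(V_{i},\phi_{i})(b)$ with the number of eigenvalues $\l$ of $(\phi_{i,n})^{n}$ (counted with algebraic multiplicity) satisfying $\l^{1/n}=b$ in $F^{\sep,\times}\ot\QQ$. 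The key identity $(\phi_{1}\ot \phi_{2})_{n}^{n}=(\phi_{1,n})^{n}\ot (\phi_{2,n})^{n}$ as $F_{n}$-linear endomorphisms of $V_{1,n}\ot_{F_{n}}V_{2,n}$ then implies that the eigenvalue multiset for the tensor product is the product multiset $\{\l\mu\}$ of the factors'. Passing to $n$-th roots in $F^{\sep,\times}\ot\QQ$ realizes this as the group-ring product $\chi(V_{1})\cdot \chi(V_{2})$.

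The main obstacle is justifying the eigenvalue recipe for $\chi$ in the first place: for a simple $(V,\phi)$ with invariant $a$, one must show that the multiset $\{\l^{1/n}\}$ places multiplicity exactly $d_{a}$ at each element of the $\G_{F}$-orbit of $a$, matching the definition of $\chi$ via Theorem~\ref{th:isoSht}(3)(b). This follows from Theorem~\ref{th:isoSht}(3)(a) via the action of the central division algebra $\End(V,\phi)$ on the descent $V_{n}$, which forces the characteristic polynomial of $(\phi_{n})^{n}$ over $F_{n}$ to be a $d_{a}$-th power of a single minimal polynomial whose roots form the full Galois orbit of a representative of $a$, combined with the dimension formula $\dim_{\br F}V=[L_{a}:F]d_{a}$. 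Independence of the auxiliary choices of $n$ and $V_{n}$ is routine: enlarging $n$ to a multiple $n'$ replaces each eigenvalue $\l$ of $(\phi_n)^n$ by $\l^{n'/n}$ and preserves the class $\l^{1/n}=(\l^{n'/n})^{1/n'}\in F^{\sep,\times}\ot\QQ$.
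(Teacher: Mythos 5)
Your proof is correct in substance but takes a genuinely different route from the paper. The paper's argument (indicated in the line following the proposition) is to decompose $V_{1}\otimes V_{2}$ into simples by computing the semisimple $F$-algebra $\End(V_{1}\otimes V_{2})\cong \End(V_{1})\otimes_{F}\End(V_{2})$: its center is $L_{a_{1}}\otimes_{F}L_{a_{2}}$, a product of fields indexed by $\G_{F}$-orbits of pairs, and the local invariants of Theorem \ref{th:isoSht}(3)(a) add under tensor product, matching $\ord_{y}(a_{1}a_{2})=\ord_{y}(a_{1})+\ord_{y}(a_{2})$. You instead work directly with the eigenvalue recipe behind Theorem \ref{th:isoSht}(2): once $\chi(V,\phi)$ is identified with the pushforward to $F^{\sep,\times}\otimes\QQ$ of the eigenvalue multiset of $(\phi_{n})^{n}$, multiplicativity is immediate from $(\phi_{1,n}\otimes\phi_{2,n})^{n}=\phi_{1,n}^{n}\otimes\phi_{2,n}^{n}$ and the fact that eigenvalues of a tensor product over the field $F\otimes_{k}k_{n}$ multiply. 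This is more elementary and avoids Brauer-group bookkeeping, at the cost of having to justify the recipe; the injectivity and unit statements are handled correctly.

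One step in your justification of the recipe is imprecise: the characteristic polynomial of $(\phi_{n})^{n}$ need not be the $d_{a}$-th power of a single irreducible polynomial, because distinct eigenvalues differing by roots of unity map to the same point of $F^{\sep,\times}\otimes\QQ$, so the $\G_{F}$-orbit of an eigenvalue $\l$ in $F^{\sep,\times}$ can be strictly larger than the orbit of $a$. What you actually need — and what is true — is that the multiplicity function $b\mapsto\#\{\l : \l\otimes\tfrac{1}{n}=b\}$ is $\G_{F}$-invariant, supported on the orbit of $a$, and has total mass $\dim_{\br F}V=[L_{a}:F]d_{a}$ spread over an orbit of size $[L_{a}:F]$. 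The invariance follows because $\phi_{n}$ conjugates $(\phi_{n})^{n}$ to itself $\Fr_{k_{n}/k}$-semilinearly, so the characteristic polynomial has coefficients in $F$ and its root multiset is $\G_{F}$-stable; the support statement is part of the well-definedness asserted in the paper's construction. With this replacement the appeal to the division-algebra action becomes unnecessary and your proof is complete.
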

This can be shown by computing the local invariants of the semisimple $F$-algebra $\End(V_{1}\ot V_{2})\cong\End(V_{1})\ot_{ F}\End(V_{2})$ for two simple objects $V_{1},V_{2}\in \IsoSht(F)$.

The forgetful functor gives  a fiber functor $\om: \IsoSht(F)\to \Vect(\br F)$, hence $\IsoSht(F)$ is a Tannakian category. Its Tannakian group over $\br F$ is a diagonalizable group $\DD_{\br F}$ with character group $\xch(\DD_{\br F})=F^{\sep,\times}\ot\QQ$, equipped with the action of $\Gal(F^{\sep}/\br F)$.


\section{Siblings and cousins}\label{s:sib}
In this section, we mention several constructions that are closely related to Shtukas. We can think of Drinfeld modules as an identical twin of Shtukas, the Deligne-Lusztig varieties as a baby sibling, while motives over a finite field as a cousin. Due to limitation of the author's expertise, we have omitted an important mixed-characteristic cousin of Shtukas discovered in the past decade by Scholze \cite{S-Berkeley}. 

\subsection{Drinfeld modules} 

We recall the notion of Drinfeld modules and the dictionary between Drinfeld modules and Shtukas of a specific kind.

\sss{Drinfeld modules}
Let $\infty\in |X|$.  Let $A=\G(X-\{\infty\}, \cO_{X})$, a Dedekind domain over $k$.

We recall the notion of a Drinfeld $A$-module of rank $n$ (or an elliptic module, according to \cite{Dr-Ell}) over a test $k$-scheme $S$. It is a pair $(\cG, \io)$ where  $\cG$ is a one-dimensional additive group scheme over $S$ (Zariski locally over $S$ isomorphic to $\GG_{a,S}$), and $\io: A\to \End_{S}(\cG)$ is a $k$-algebra homomorphism such that for every geometric point $s=\Spec K \to  S$, and any isomorphism of $K$-group schemes $\cG_{s}\cong\GG_{a,K}$, the restriction of $\io$ to the $s$-fiber
\begin{equation*}
\io_{s}: A\to \End_{K}(\cG_{s})\cong K\j{\t}
\end{equation*}
(where $\t\in \End_{K}(\cG_{s})\cong\End_{K}(\GG_{a,K})$ is the Frobenius morphism $x\mapsto x^{q}$) takes the form
\begin{equation*}
\io_{s}(a)=\sum_{i=0}^{-n[k_{\infty}:k]\ord_{\infty}(a)}c_{i}(a)\t^{i}, \quad \forall a\in A-\{0\}
\end{equation*}
and $c_{-n[k_{\infty}:k]\ord_{\infty}(a)}(a)\ne0$ for $a\ne0$.

The action of $A$ on the Lie algebra of $\cG$ gives a ring homomorphism $A\to \G(S,\cO_{S})=\End_{S}(\Lie\cG)$, or equivalently a map $\xi: S\to \Spec A=X-\{\infty\}$. This is called the {\em characteristic} of the Drinfeld $A$-module $(\cG,\io)$.

Let $\Dr_{n}(S)$ be the groupoid of Drinfeld $A$-module of rank $n$ over $S$.

\sss{Drinfeld modular variety}\label{sss:one leg}
To relate Drinfeld modules to Shtukas, we consider a variant of the moduli problem in Example \ref{ex:Dr}.

For simplicity we assume the residue field of $\infty$ is $k$. Let $\Sht_{n}^{\Dr}$ be the moduli stack whose $S$ points are $(x, \cE, \{\cE(\frac{i}{n})\}_{i\in \ZZ}, \phi)$ where
\begin{enumerate}
\item $x: S\to U:=X-\{\infty\}$.
\item $\cE$ is a rank $n$ vector bundle over $X\times S$ such that for any geometric point $s\in S$, the coherent Euler characteristic $\chi(X_{s}, \cE_{s})=0$ (i.e., $\dim \upH^{0}(X_{s},\cE_{s})=\dim\upH^{1}(X_{s},\cE_{s})$).
\item $\cE(-1)=\cE\ot\cO_{X}(-\infty)\subset \cE(-\frac{n-1}{n})\subset\cdots\subset \cE(-\frac{1}{n})\subset \cE(0)=\cE$ is a full flag of $\cE|_{\{\infty\}\times S}$, i.e., $\cE(-\frac{i}{n})/\cE(-\frac{i+1}{n})$ is locally free of rank one over $\{\infty\}\times S$, for $i=0,1,\cdots, n-1$.  We extend the definition of $\cE(\frac{i}{n})$ to all $i\in \ZZ$ by requiring $\cE(1+\frac{i}{n})=\cE(\frac{i}{n})\ot\cO_{X}(\infty)$.
\item $\phi: {}^{\t}\cE\to \cE(\frac{1}{n})$ with cokernel locally free of rank one over $\G(x)$, and $\phi$ sends ${}^{\t}(\cE(-\frac{i}{n}))$ to $\cE(-\frac{i-1}{n})$ for all $i=1,\cdots,n-1$.
\end{enumerate}


Let $I=\{1,2\}$ with the decomposition into singletons $I_{1}=\{1\}$ and $I_{2}=\{2\}$. Let $\l_{1}=(0,\cdots,0,-1)$ and $\l_{2}=(1,0,\cdots ,0)$ be two minuscule coweights of $\GL_{n}$. In Example \ref{ex:Dr} we defined the moduli stack $\pi: \Sht^{(I_{1},I_{2}), (\l_{1},\l_{2})}_{\GL_{n}} \to X^{2}$. Then there is a map 
\begin{equation}\label{Dr to Sht}
\Sht_{n}^{\Dr}\to \Sht^{(I_{1},I_{2}), (\l_{1},\l_{2})}_{\GL_{n}}|_{U\times\{\infty\}}
\end{equation}
sending $(x, \cE, \{\cE(\frac{i}{n})\}_{i\in \ZZ}, \phi)$ to the two legs $(x, \infty)$ and bundles
\begin{equation*}
\xymatrix{\cE_{0}=\cE & \cE_{1}:={}^{\t}(\cE(\frac{-1}{n}))\ar@{_{(}->}[l]_-{\phi}\ar@{^{(}->}[r] & \cE_{2}:={}^{\t}\cE}
\end{equation*}
It can be shown that \eqref{Dr to Sht} is a closed embedding; the $\ov k$-points in its image consists of Shtukas whose isocrystals at $\infty$ are supersingular with slope $1/n$.

\sss{Dictionary}\label{sss:dict} According to \cite{Mum}, there is a canonical equivalence of groupoids
\begin{equation*}
\Dr_{n}(S)\cong \Sht^{\Dr}_{n}(S). 
\end{equation*}



We describe the functors in both directions. More details can be found in \cite[Chapter 6]{Goss}. Below we assume $S$ is locally noetherian.

Drinfeld modules to Shtukas. Let $(\cG,\io)$ be a Drinfeld module of rank $n$ over $S$. Then $M=\un\Hom_{S}(\cG,\GG_{a,S})$ is a quasi-coherent sheaf on $S$ that is  locally free of rank one over $\cO_{S}\j{\t}=\un\End_{S}(\GG_{a,S})$. Moreover, $M$ has an action of $A$ via $\io$  commuting with the $\cO_{S}\j{\t}$-action. We will define a canonical increasing filtration $M_{i}$ on $M$ by $\cO_{S}$-submodules as follows. Locally on $S$,  let $m_{0}\in M$ be a basis of $M$ as a free $\cO_{S}\j{\t}$-module. Let $M'(\frac{i}{n})$ be the $\cO_{S}$-submodule of $M$ generated by $m_{0}, \t m_{0},\cdots, \t^{i-1}m_{0}$. We then renormalize the filtration $\{M'(\frac{i}{n})\}$: define $M(\frac{i}{n})$ to be those $m\in M$ such that $am\in M'(\frac{i}{n}+\ord_{\infty}(a))$ for $\ord_{\infty}(a)$ sufficiently large (depending on $m$). Two local choices of $m_{0}$ define the same $M'(\frac{i}{n})$ for $i\gg0$ (because any other basis $m_{1}$ of $M$ as an $\cO_{S}\j{\t}$-module takes the form $m_{1}=(c_{0}+c_{1}\t+\cdots +c_{N}\t^{N})m_{0}$ for $c_{1}\in \cO_{S}^{\times}$ and $c_{i}$ nilpotent for $i\ge1$). This implies that the filtration $\{M(\frac{i}{n})\}_{i\ge1}$ is independent of the choice of the local basis $m_{0}$, hence defining a canonical filtration of $M$ by coherent $\cO_{S}$-submodules. From the construction we see that $\t M(\frac{i}{n})\subset M(\frac{i+1}{n})$. Note $A$ is filtered by the pole order at $\infty$: $A=\cup_{j\ge0}A_{j}$. Let $\Rees(A)=\op_{j\ge0}A_{j}$, then $X=\Proj (\Rees(A))$. By construction, $A_{j}$ sends $M(\frac{i}{n})$ to $M(\frac{i}{n}+j)$.  Let $N(\frac{i}{n})$ be the graded $\Rees(A)\ot\cO_{S}$-module whose degree $j$ piece is $M(\frac{i}{n}+j)$. Let $\cE(\frac{i}{n})$ be the coherent sheaf on $X\times S=\Proj_{S}(\Rees(A)\ot\cO_{S})$ associated with $N(\frac{i}{n})$. One checks that $\cE(\frac{i}{n})$ are vector bundles of rank $n$ over $X\times S$, and the graded embeddings $N(\frac{i}{n})\incl N(\frac{i+1}{n})$ induce injective maps $\cE(\frac{i}{n})\incl \cE(\frac{i+1}{n})$ whose cokernel is rank one on $\{\infty\}\times S$. Finally, the map $\phi: {}^{\t}\cE={}^{\t}\cE(0)\to \cE(\frac{1}{n})$ comes from the $\Fr_{S}$-linear map $\phi_{N}: N(0)\to N(\frac{1}{n})$, which in degree $j$ is $\t: M(j)\to M(\frac{1}{n}+j)$.

Shtukas to Drinfeld modules. Let $(x, \cE, \{\cE(\frac{i}{n})\}_{i\in \ZZ}, \phi)\in \Sht^{\Dr}(S)$. Let $\pr_{S}: X\times S\to S$ be the projection. Let $M$ be the quasi-coherent $\cO_{S}$-module $M=\varinjlim_{i}\pr_{S*}\cE(\frac{i}{n})$ with a commuting action of $A$. The map $\phi$ gives a $\Fr_{S}$-linear map $\phi_{M}: M\to M$. One shows that $M$ is a locally free of rank one over the skew-polynomial ring $\cO_{S}\j{\t}$ over $\cO_{S}$, where $\t$ acts by $\phi_{M}$.  Since $\un\End_{S}(\GG_{a,S})=\cO_{S}\j{\t}$, a locally free rank one $\cO_{S}\j{\t}$-module is the same as a Zariski locally trivial torsor for $\un\Aut(\GG_{a,S})\cong \cO_{S}\j{\t}^{\times}$, which in turn is the same as a Zariski locally trivial form $\cG$ of $\GG_{a}$ over $S$.  Since $M$ has an $A$-action commuting with the $\cO_{S}\j{\t}$-action, $\cG$ also has an $A$-action, which gives the data of a Drinfeld module.

\sss{$t$-motives} G.Anderson's theory of $t$-motives \cite{And} is a generalization of Drinfeld modules for $A=k[t]$. On the one hand, if we think of Drinfeld modules as function field analogs of elliptic curves, Anderson's $t$-motives are analogs of more general motives  including analogs of higher dimensional abelian varieties.  On the other hand, the dictionary in \S\ref{sss:dict} can be generalized to $t$-motives: $t$-motives  can be turned into Shtukas on $X=\PP^{1}$ with a fixed leg at $\infty$ and another moving leg. For details we refer to Anderson's original paper \cite{And} and the book \cite{Goss} by D. Goss.

\subsection{(Affine) Deligne-Lusztig varieties}
Let $\cB$ be the flag variety of $G$: it is the moduli space of Borel subgroups of $G$.  The set of $G$-orbits on $\cB\times\cB$ (under the diagonal action) form a Coxeter group $(W,S)$ and is isomorphic to the Weyl group attached to a maximal split torus of $G$.  For $(B,B')\in \cB^{2}$, we write $\pos(B,B')=w$ if $(B,B')$ lies in the $G$-orbit indexed by $w\in W$; we write $\pos(B,B')\le w$ if $(B,B')$ lies in the closure of the $G$-orbit indexed by $w\in W$.

We may define an analogue of $\Hk^{(I_{1},\cdots, I_{r})}_{G}$ (at least for each $I_{j}$ being a singleton set) by replacing $G$-torsors with Borel subgroups. Let $\un w=(w_{j})_{1\le j\le r}\in W^{r}$. Define $Y^{\un w}$ to be the subscheme of $\cB^{r+1}$ consisting of Borel subgroups $(B_{0},\cdots, B_{r})$ of $G$ such that $\pos(B_{j-1},B_{j})=w_{j}$ for $1\le j\le r$. Similarly we define $Y^{\le \un w}\subset \cB^{r+1}$ be the closed subscheme where  $\pos(B_{j-1},B_{j})\le w_{j}$ for $1\le j\le r$.

We may then define an analogue of $\Sht^{(I_{1},\cdots, I_{r})}_{G}$. Define $X^{\un w}$ and $X^{\le \un w}$ by the Cartesian diagrams
\begin{equation*}
\xymatrix{ X^{\un w}\ar[r]\ar[d] & Y^{\un w}\ar[d]^{(p_{0}, p_{r})} & X^{\le\un w}\ar[r]\ar[d]  & Y^{\le\un w}\ar[d]^{(p_{0}, p_{r})} \\
\cB\ar[r]^-{(\id,\Fr)} & \cB^{2} & \cB\ar[r]^-{(\id,\Fr)} & \cB^{2}
}
\end{equation*}
Here, $p_{j}$ is the $j$-th projection $\cB^{r+1}\to \cB$. Note that when $r=1$, $X^{w}$ is the Deligne-Lusztig variety \cite[Definition 1.4]{DL} that classifies Borel subgroups $B$ such that $\pos(B,\Fr(B))=w$. The Deligne-Lusztig varieties play a vital role in the construction and classification of irreducible representations of finite groups of Lie type.

Similarly, if we replace $G$ by the loop group $LG$, $B$ by the Iwahori $\bI\subset LG$, $\cB$ by the affine flag variety $\Fl_{G}=LG/\bI$, $W$ by the extended affine Weyl group $\tilW$, we may similarly define $\cY^{\un w}\subset \cY^{\le\un w}\subset (\Fl_{G})^{r+1}$ for $\un w\in (\tilW)^{r}$. We can then  intersect them with the graph of Frobenius on $\Fl_{G}$ to obtain special cases of the {\em affine Deligne-Lusztig varieties} $\cX^{\un w}$ and $\cX^{\le \un  w}$.

More general affine Deligne-Lusztig varieties are defined by Rapoport \cite[\S4]{Ra}. To define them, one fixes $b\in LG$ and replaces the Frobenius map $\Fl_{G}\to \Fl_{G}$ by $b\c\Fr$ in the previous paragraph, where $b$ acts by left translation. The resulting sub-ind-schemes of $\Fl_{G}$ are denoted $\cX^{\un w}(b)$ and $\cX^{\le\un w}(b)$, whose isomorphism classes only depend on the $\Fr$-conjugacy class of $b$.

\subsection{Motives over a finite field}
Now we would like to point out the similarities between Shtukas and motives over finite fields. The main references are Milne \cite{Milne} and Langlands-Rapoport \cite{LR}. 

Fix a prime $p$. For each finite extension $\FF_{q}$ of $\FF_{p}$ there is a $\QQ$-linear tensor category $\Mot(\FF_{q})$ of (pure) motives over $\FF_{q}$, whose hom spaces are defined using cycles on the product up to numerical equivalence. Similarly we have the $\QQ$-linear tensor category $\Mot(\ov\FF_{p})$ of (pure) motives over $\ov\FF_{p}$. It is the colimit of $\Mot(\FF_{q})$ as $q$ runs over finite extensions of $\FF_{p}$. These categories are known to be semisimple \cite{Jan}.

We have the following description of $\Mot(\ov\FF_{p})$ as an abelian category, generalizing the Honda-Tate theory for abelian varieties over finite fields. Recall a Weil $p$-number of weight $n\in\ZZ$ is an algebraic number $\pi$ that is an $\ell$-adic unit for all primes $\ell\ne p$, and all archimedean norms of $\pi$ are equal to $p^{n/2}$. Let $W_{p}\subset \ov \QQ$ be the set of Weil $p$-numbers of various integer weights. Let $\mu_{\infty}\subset \ov\QQ^{\times}$ be the roots of unity. It is permuted by the Galois group $\G_{\QQ}=\Gal(\ov\QQ/\QQ)$. Given $\pi\in W_{p}/\mu_{\infty}$, define a number field $L_{\pi}=\cap_{n\in\NN}\QQ[\pi^{n}]$.

The following theorem should be compared with Theorem \ref{th:isoSht}.
\begin{theorem}[See {\cite[Theorem 2.18, 2.19]{Milne}}]\label{th:mot}
\begin{enumerate}
\item The isomorphisms classes of simple objects in $\Mot(\ov\FF_{p})$ are in natural bijection with $\G_{\QQ}$-orbits on $W_{p}/\mu_{\infty}$.
\item  If a simple motive $X\in \Mot(\ov\FF_{p})$ corresponds to the Galois orbit of $\pi\in W_{p}/\mu_{\infty}$ of weight $w$, then 
\begin{enumerate}
\item $\End(X)$ is a central division algebra over the number field $L_{\pi}$ whose invariant at a place $v\in |L_{\pi}|$ is given by
\begin{equation*}
\inv_{v}(\End(X))=\begin{cases}  -\ord_{v}(\pi)[k_{v}:\FF_{p}], & v\mid p \\ \frac{1}{2}, &  v\mbox{ real, $w$ is odd} ; \\ 0, & \textup{ otherwise. }\end{cases}
\end{equation*}

\item The rank of $X$ is $[L_{\pi}:\QQ]d_{\pi}$, where $d_{\pi}$ is the least common denominator of the collection of rational numbers $\{\ord_{v}(\pi)[k_{v}:\FF_{p}]\}_{v|p}$.
\end{enumerate}
\end{enumerate}
\end{theorem}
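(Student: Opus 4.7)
The plan is to imitate the structure of Theorem \ref{th:isoSht}, replacing iso-Shtukas over $F$ with motives over $\ov\FF_p$ and Drinfeld's analysis of eigenvalues of iterated Frobenius with the Honda--Tate theory for abelian varieties. Jannsen's semisimplicity of $\Mot(\ov\FF_p)$ (cited just above the statement) reduces part (1) to a combinatorial description of the set of isomorphism classes of simples.

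For the map from simples to Weil numbers, I would follow the recipe given right after Theorem \ref{th:isoSht}. Given a simple $X\in \Mot(\ov\FF_p)$, since $\Mot(\ov\FF_p)=\colim_q\Mot(\FF_q)$, there is a finite extension $\FF_q$ with $q=p^n$ and a descent $X_n\in \Mot(\FF_q)$. The relative $q$-Frobenius of $X_n$ yields an endomorphism $\pi_{X_n}\in \End(X_n)$ whose eigenvalue $\pi$ on any Weil cohomology is, by Deligne's proof of the Weil conjectures, a Weil $q$-number of some integer weight $w$. Any $n$-th root $\pi^{1/n}\in \ov\QQ^\times$ lies in $W_p$, and a routine check (changing $n$, the descent, or the eigenvalue alters $\pi^{1/n}$ only by a root of unity acted on by $\G_\QQ$) shows that its class in $(W_p/\mu_\infty)/\G_\QQ$ is canonically attached to $X$. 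Injectivity then follows from semisimplicity together with faithfulness of the $\ell$-adic realization: two simples producing the same Frobenius eigenvalue class admit isomorphic $\ell$-adic realizations and must therefore be isomorphic.

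For surjectivity I would invoke the classical Honda--Tate theorem: every Weil $q$-number of weight $1$ is realized by a simple abelian variety over $\FF_q$, hence by its motive. To reach other weights, I would use Tate twists by $\QQ(1)$, whose Frobenius contributes the Weil number $p^{-1}$ of weight $-2$; every $\G_\QQ$-orbit in $W_p/\mu_\infty$ can be shifted into weight $1$ by a suitable power of $\QQ(1)$. Part (2) is then a place-by-place computation of the invariants of the central simple $L_\pi$-algebra $\End(X)$, in direct parallel with Theorem \ref{th:isoSht}(3). At a finite place $v$ of $L_\pi$ lying over a rational prime $\ell\ne p$, the $\ell$-adic realization provides a faithful $\End(X)\otimes_\QQ \QQ_\ell$-module whose $L_\pi\otimes \QQ_\ell$-rank equals the rank of $X$, forcing $\inv_v\End(X)=0$. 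At $v\mid p$, the crystalline realization produces an isocrystal to which Dieudonn\'e--Manin applies, and matching the slope decomposition with the action of $\pi$ gives exactly $\inv_v\End(X)= -\ord_v(\pi)[k_v:\FF_p] \bmod \ZZ$, verbatim as in the iso-Shtuka case. At a real place, the Betti realization of a characteristic-zero lift carries a pure Hodge structure of weight $w$; when $w$ is odd, the associated Rosati involution is positive of quaternionic type, producing invariant $\tfrac12$. The rank formula $\dim X = [L_\pi:\QQ]d_\pi$ then follows by equating the total dimension with the product of local degrees, $d_\pi$ being the least common multiple of the local indices forced by the $p$-adic slopes.

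I expect the main obstacle to be surjectivity in (1), which genuinely rests on Honda's construction of abelian varieties from CM lifts in characteristic zero and admits no purely formal motivic shortcut. The second-hardest step is the archimedean invariant in the odd-weight case, which demands a careful polarization and Rosati-involution analysis; the remaining local invariants and the rank formula then fall out formally from semisimplicity and the standard realization functors.
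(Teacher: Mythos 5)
The paper does not actually prove this theorem: it is quoted from Milne \cite{Milne} (Theorems 2.18, 2.19) solely to set up the analogy with Theorem \ref{th:isoSht}, so there is no in-paper argument to compare yours against. Judged on its own terms, your sketch follows the right paradigm (Frobenius eigenvalues, Honda--Tate, Tate twists, place-by-place invariants), but it contains genuine gaps.

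The most serious is injectivity in part (1). Faithfulness of the $\ell$-adic realization does not let you conclude that two simple motives with the same Frobenius-eigenvalue class are isomorphic; you would need an isomorphism of realizations commuting with Frobenius to be induced by a morphism of motives, i.e.\ \emph{fullness} of the realization on Frobenius-invariant classes. That is precisely the Tate conjecture, and Milne's Theorems 2.18--2.19 are proved under that hypothesis (unconditionally only on the Tannakian subcategory generated by abelian varieties, where Tate's theorem on homomorphisms is known). So the step you describe as falling out of semisimplicity in fact hides the deepest input. Two further problems: (i) tensoring with $\QQ(1)$ shifts the weight by $2$ and hence preserves parity, so even-weight Weil numbers cannot be ``shifted into weight $1$''; surjectivity in even weight needs the separate fact that every Weil number is, up to a root of unity, a product of weight-one Weil numbers and a power of $q^{1/2}$. (ii) A simple motive over $\ov\FF_{p}$ need not lift to characteristic zero, so the invariant $\tfrac{1}{2}$ at real places of $L_{\pi}$ in odd weight cannot be obtained from ``the Betti realization of a lift''; it is extracted (as in Tate's computation of $\End^{0}$ of abelian varieties over finite fields) from the constraints that the already-known $\ell$-adic and $p$-adic invariants and the reciprocity law $\sum_{v}\inv_{v}=0$ place on the division algebra $\End(X)$, together with the parity of the weight. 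The rank formula in (2)(b) then follows from $[\End(X):L_{\pi}]=d_{\pi}^{2}$ exactly as in the iso-Shtuka case, which part of your sketch is fine.
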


To describe the tensor structure on $\Mot(\ov\FF_{p})$,  we consider the following linear function
\begin{equation*}
\chi: K_{0}(\Mot(\ov\FF_{p}))\to \ZZ[W_{p}/\mu_{\infty}]^{\G_{\QQ}}
\end{equation*}
sending a simple motive $X$ corresponding to the Galois orbit of $\pi\in W_{p}/\mu_{\infty}$ to the function $\chi(X): W_{p}/\mu_{\infty}\to \ZZ$ that is supported on the $\G_{\QQ}$-orbit of $\pi$ with constant value $d_{\pi}$.  The following is analogous to Prop. \ref{p:IsoSht tensor}:

\begin{prop}[See {\cite[Corollary 2.20]{Milne}}] The map $\chi$ is an injective ring homomorphism.
\end{prop}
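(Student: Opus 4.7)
Injectivity is immediate from semisimplicity of $\Mot(\ov\FF_p)$: by Theorem \ref{th:mot}(1), $K_0(\Mot(\ov\FF_p))$ is freely generated by classes of simple motives, and $\chi$ sends the class of a simple $X$ corresponding to $\pi \in W_p/\mu_\infty$ to $d_\pi \cdot \one_{\G_\QQ\cdot\pi}$, a nonzero function supported exactly on the orbit $\G_\QQ \cdot \pi$. Distinct simples thus go to functions with disjoint supports, so they are $\ZZ$-linearly independent in the image.

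For multiplicativity I would mirror the strategy sketched for Proposition \ref{p:IsoSht tensor}: reduce by bilinearity to the case of two simple motives $X, Y$ with parameters $\pi, \sigma$, and analyze the decomposition $X \otimes Y = \bigoplus_i X_i^{\oplus m_i}$ via the Wedderburn decomposition of its endomorphism algebra
$$\End(X\otimes Y) \cong D_X \otimes_{\QQ} D_Y \cong \prod_i M_{m_i}(D_{X_i}),$$
where $D_X = \End(X)$ and $D_Y = \End(Y)$ are the central division algebras from Theorem \ref{th:mot}(2). The center $L_\pi \otimes_\QQ L_\sigma$ of the middle term breaks into a product of number fields indexed by $\G_\QQ$-orbits of pairs of embeddings $(\iota,\kappa):L_\pi\times L_\sigma\hookrightarrow\ov\QQ$; the corresponding factor is the compositum $\iota(L_\pi)\cdot\kappa(L_\sigma)$, which is precisely the field $L_{\iota(\pi)\kappa(\sigma)}$ attached to the product Weil number. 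This step identifies the set of $\G_\QQ$-orbits appearing in $\chi([X\otimes Y])$ with the support of $\chi([X])\cdot\chi([Y])$.

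It remains to match the coefficients, and this is the main obstacle. For each simple factor $C_i$ of $D_X \otimes_\QQ D_Y$ over $L_{X_i}=\iota(L_\pi)\cdot\kappa(L_\sigma)$, I would compute its Brauer invariants at every place via the base-change plus tensor-product formula
$$\inv_v(C_i) \equiv [L_{X_i,v}:L_{\pi,v|L_\pi}]\,\inv_{v|L_\pi}(D_X) + [L_{X_i,v}:L_{\sigma,v|L_\sigma}]\,\inv_{v|L_\sigma}(D_Y) \pmod{\ZZ},$$
plug in the explicit invariants from Theorem \ref{th:mot}(2)(a) for $D_X$ and $D_Y$, and verify that the result coincides with the invariant prescribed by Theorem \ref{th:mot}(2)(a) applied to the product Weil number $\iota(\pi)\kappa(\sigma)$. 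This forces $D_{X_i}\cong\End(X_i)$ to be the expected division algebra, and the rank formula of Theorem \ref{th:mot}(2)(b) then pins down $m_i$; summing over $i$ produces $\chi([X])\cdot\chi([Y])$. The hard part is the bookkeeping of this local-invariant computation, simultaneously handling places above $p$, the real places (where the weight parity contributes the $1/2$ in the formula), and the remaining places where invariants vanish, while correctly tracking ramification indices and residue degrees of the compositum embedding --- a direct motivic analogue of the iso-Shtuka calculation invoked in Proposition \ref{p:IsoSht tensor}.
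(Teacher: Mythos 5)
Your injectivity argument is correct and complete: by semisimplicity, $K_{0}(\Mot(\ov\FF_{p}))$ is free on the classes of simple motives, Theorem \ref{th:mot}(1) puts these in bijection with $\G_{\QQ}$-orbits in $W_{p}/\mu_{\infty}$, and $\chi$ sends distinct simples to nonzero functions with disjoint supports. (The paper itself gives no argument for this Proposition, deferring entirely to Milne.)

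The multiplicativity argument, however, rests on an identity that is false: the natural map $\End(X)\ot_{\QQ}\End(Y)\to\End(X\ot Y)$ is injective but in general not surjective, so the Wedderburn decomposition of $D_{X}\ot_{\QQ}D_{Y}$ does not compute the isotypic decomposition of $X\ot Y$. Concretely, take $X=h^{1}(E)$ for an ordinary elliptic curve $E$ over $\ov\FF_{p}$, so that $L_{\pi}=\QQ(\pi)$ is imaginary quadratic, $d_{\pi}=1$ and $\End(X)=L_{\pi}$, and take $Y=X^{\vee}$. Then $\End(X)\ot_{\QQ}\End(Y)\cong L_{\pi}\times L_{\pi}$ has $\QQ$-dimension $4$, whereas $X\ot X^{\vee}\cong \one^{\op 2}\op Z$ with $Z$ simple of rank $2$ (Weil number $\pi/\ov\pi=\pi^{2}/p$), so $\End(X\ot X^{\vee})\cong M_{2}(\QQ)\times L_{\pi^{2}/p}$ has dimension $6$. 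The same example refutes your identification of the compositum $\iota(L_{\pi})\cdot\kappa(L_{\sigma})$ with $L_{\iota(\pi)\kappa(\sigma)}$: for the diagonal factor the product Weil number is $1$ and $L_{1}=\QQ$, a proper subfield of $L_{\pi}$. The underlying phenomenon is that several pairs $(\pi',\sigma')$ of conjugates can have the same product, and it is exactly this collapsing (which governs the multiplicity of $\one$, and more generally the multiplicities in $\chi(X)\chi(Y)$) that the tensor-of-division-algebras bookkeeping cannot see; carried out literally it cannot produce the correct answer $\chi(X)\chi(X^{\vee})=2[1]+[\pi^{2}/p]+[\ov\pi^{2}/p]$ from the two field factors of $L_{\pi}\ot_{\QQ}L_{\pi}$. (The one-line hint the paper gives for the sibling Proposition \ref{p:IsoSht tensor} suffers from the same defect if read literally.) The efficient correct route goes through the fiber functor rather than the endomorphism algebra: Theorem \ref{th:mot}(2) implies that for a simple $X$ with parameter $\pi$, the Frobenius eigenvalues on $\om_{\ell}(X)$ are the conjugates of $\pi$, each with multiplicity $d_{\pi}$; that is, $\chi(X)$ is the multiset of Frobenius eigenvalues taken modulo $\mu_{\infty}$. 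Since Frobenius acts on $\om_{\ell}(X\ot Y)=\om_{\ell}(X)\ot\om_{\ell}(Y)$ as $\pi_{X}\ot\pi_{Y}$, eigenvalues multiply, and multiplicativity of $\chi$ follows at once, with no local-invariant computation needed.
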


\sss{Fiber functors}
For primes $\ell\ne p$,  taking $\ell$-adic \'etale cohomology of a motive gives a fiber functor
\begin{equation*}
\om_{\ell}: \Mot(\ov\FF_{p})\ot\QQ_{\ell}\to \Vect(\Ql).
\end{equation*}
For $\ell=p$, taking the crystalline cohomology of a motive over $\ov\FF_{p}$ gives a tensor  functor 
\begin{equation*}
\om_{p}: \Mot(\ov\FF_{p})\ot\QQ_{p}\to \IsoCrys(\QQ_{p}).
\end{equation*}
Assuming the Tate conjecture, $\Mot(\ov\FF_{p})\ot\RR$ also admits a fiber functor into $\RR$-vector spaces. These fiber functors are analogues of the functors $\om_{x}$ on $\IsoSht(F)$ defined in \eqref{Sht fib functor}.

\sss{Iso-Shtukas for $\QQ$?}
Comparing Theorem \ref{th:mot} with Theorem \ref{th:isoSht}, it suggests that motives over $\ov\FF_{p}$ should be thought of as iso-Shtukas for the number field $\QQ$ with legs at $p$ and $\infty$. This analogy works especially well between the motives of elliptic curves over $\ov \FF_{p}$ and the Shtukas associated to Drinfeld modules of rank $2$.

In \cite{Kot}, Kottwitz defines a semisimple $\QQ$-linear abelian category as representations of a gerbe over $\QQ$ band by a pro-torus. Conjecturally it includes $\Mot(\ov\FF_{p})$ as a full subcategory for all primes $p$. In \cite{SchICM}, Scholze suggests that Kottwitz's category should be thought of as $\IsoSht(\QQ)$ (with an arbitrary finite number of legs).

\begin{ques} If motives over $\ov\FF_{p}$ give iso-Shtukas for $\QQ$ with legs at $p$ and $\infty$, what geometric objects give iso-Shtukas for $\QQ$ with legs at several primes?
\end{ques}

%
%
%

\section{Geometry of the moduli stack}\label{s:geom}
In this section, we summarize basic geometric properties and structures on the moduli stack of $G$-Shtukas. They are responsible for the symmetries on the cohomology of these moduli stacks that will be studied in the next section.

\subsection{Representability}
The most basic result on the geometry of the moduli of Shtukas is the following.
\begin{theorem}[Varshavsky {\cite[Proposition 2.16(a)]{Var}}] Let $I=I_{1}\coprod\cdots\coprod I_{r}$ be a decomposition of a finite set, and let $\un\l=(\l_{i})_{i\in I}\in (\xcoch(T)^{+})^{I}$. The stack $\Sht^{(I_{1},\cdots, I_{r}), \le \un\l}_{G,\bK}$ is a Deligne-Mumford stack locally of finite type over $k$.
\end{theorem}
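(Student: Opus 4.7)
The plan is to combine an algebraicity argument for the bounded Hecke stack with a Lie-algebra computation that exploits the vanishing derivative of Frobenius on characteristic-$p$ dual numbers.

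First I would verify that the bounded Hecke stack $\Hk^{(I_{1},\cdots,I_{r}),\le\un\l}_{G,\bK}$ is itself an Artin stack locally of finite type over $k$. Over the disjoint locus $U^{I_{1}}_{\disj}\times\cdots\times U^{I_{r}}_{\disj}$ this is immediate from the discussion of \S\ref{sss:bd Hk}: after trivializing the formal discs around the legs, the relative position of each $\a_{j}$ is constrained to the Schubert variety $\Gr_{G,\le\l_{i}}$ from \S\ref{sss:Gr}, which is projective of finite type; hence $\Hk^{(I_{1},\cdots,I_{r}),\le\un\l}_{G,\bK,\disj}\to \Bun_{G,\bK}\times U^{I_{1}}_{\disj}\times\cdots\times U^{I_{r}}_{\disj}$ is of finite type, and so is the closure in $\Hk^{(I_{1},\cdots,I_{r})}_{G,\bK}$. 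Since $\Sht^{(I_{1},\cdots,I_{r}),\le\un\l}_{G,\bK}$ is the fiber product \eqref{defn Sht} of Artin stacks locally of finite type over $k$, it inherits both properties.

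To upgrade ``Artin'' to ``Deligne--Mumford'' I would show, for every geometric point $\xi=((x_{i}),(\cE_{j}),(\a_{j}),\io)\in\Sht^{(I_{1},\cdots,I_{r}),\le\un\l}_{G,\bK}(\ov k)$, that the automorphism group $\Aut(\xi)$ is étale over $\ov k$. This group is a closed subgroup scheme of the finite-type algebraic group $\Aut(\cE_{0})$, so 0-dimensionality already implies finiteness, and everything reduces to proving $\Lie\Aut(\xi)=0$. Write a first-order automorphism over $\ov k[\ep]/\ep^{2}$ as a compatible system $g_{j}=\id+\ep h_{j}$ with $h_{j}\in \upH^{0}(X_{\ov k},\ad(\cE_{j}))$. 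The compatibility with $\io:\cE_{r}\isom{}^{\t}\cE_{0}=(\id_{X}\times\Fr_{\ov k[\ep]})^{*}\cE_{0}$ demands $\io\c g_{r}=({}^{\t}g_{0})\c\io$; but $\Fr_{\ov k[\ep]}$ sends $\ep\mapsto\ep^{q}=0$, hence ${}^{\t}g_{0}=\id$ over $\ov k[\ep]$, which forces $h_{r}=0$. The modification compatibility $\a_{j}\c g_{j-1}=g_{j}\c\a_{j}$ on $X_{\ov k}-\bigcup_{i\in I_{j}}\G(x_{i})$ then propagates the vanishing: from $h_{j}=0$ one obtains $h_{j-1}=0$ on a schematically dense open of the connected smooth curve $X_{\ov k}$, and since $\ad(\cE_{j-1})$ is a vector bundle this forces $h_{j-1}\equiv 0$. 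Descending $j=r,r-1,\cdots,1$ gives $h_{j}=0$ for all $j$, so $\Lie\Aut(\xi)=0$.

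The main conceptual input, and the only place where positive characteristic enters decisively, is the last paragraph: the vanishing of the derivative of Frobenius on $\ov k[\ep]$ rigidifies infinitesimal automorphisms through the $\io$-compatibility, thereby killing the potentially large infinitesimal automorphisms that $\cE_{0}$ may carry as a $G$-bundle. The algebraicity and finite-type step, by contrast, is essentially bookkeeping given the Beilinson--Drinfeld presentation in \S\ref{sss:BD}, and I do not expect it to be the main obstacle.
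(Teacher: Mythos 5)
Your proof is correct. The first step (Artin, locally of finite type, via the fiber product \eqref{defn Sht} and finite-typeness of the bounded Hecke stack over $\Bun_{G,\bK}\times U^{I}$) is exactly what the paper does. For the upgrade to Deligne--Mumford, however, you take a genuinely different route. The paper, following Varshavsky, rigidifies: one passes to a deeper normal level structure $\bK'\lhd\bK$ at the points of $\Sig$ so that each Harder--Narasimhan truncation ${}^{\le\mu}\Bun_{G,\bK'}$ is a quasi-projective scheme, deduces from \eqref{defn Sht} that ${}^{\le\mu}\Sht^{(I_{1},\cdots,I_{r}),\le\un\l}_{G,\bK'}$ is a quasi-projective scheme, and presents the original truncation as its quotient by the finite group $\prod_{x\in\Sig}K_{x}/K'_{x}$ (Lemma \ref{l:change level Sig}, \S\ref{sss:HN}). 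You instead verify directly that the diagonal is unramified by showing $\Lie\Aut(\xi)=0$: the $\io$-compatibility forces $h_{r}=0$ because $\Fr$ on $\ov k[\ep]/\ep^{2}$ annihilates $\ep$, and the modifications propagate the vanishing across the schematically dense complement of the graphs $\G(x_{i})$. This is the same mechanism (vanishing differential of Frobenius) that the paper invokes for the local model theorem (Proposition \ref{p:local model}), and it is a standard, perfectly valid alternative. The trade-off: Varshavsky's route yields the stronger structural statement that each finite-type truncation is a quotient of a quasi-projective scheme by a finite constant group (hence also separatedness data for free), while yours is more self-contained and does not require the existence of rigidifying level structures. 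Two points worth tightening in a written version: unramifiedness of the diagonal should in principle be checked at geometric points valued in arbitrary algebraically closed fields, though local finite-typeness over $k$ reduces this to $\ov k$-points by a Jacobson argument; and for level groups $\bK_{x}\not\subset L^{+}_{x}G$ the inclusion $\Aut(\xi)\subset\Aut(\cE_{0})$ requires reinterpreting $\cE_{0}$ as a torsor on $(X-\Sig)\times S$ together with the level data, which does not affect the argument.
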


It is well-known that $\Bun_{G,\bK}$ is a smooth Artin stack locally of finite type over $k$. Moreover the maps $p_{i}: \Hk_{G,\bK}^{(I_{1},\cdots, I_{r}), \le \un\l}\to \Bun_{G,\bK}$ are of finite type. In view of the diagram \eqref{defn Sht} defining $\Sht^{(I_{1},\cdots, I_{r}), \le\un\l}_{G,\bK}$, we see that it is an Artin stack locally of finite type over $k$. 

\sss{Harder-Narasimhan truncation}\label{sss:HN}
One can cover $\Sht^{(I_{1},\cdots, I_{r}), \le \un\l}_{G,\bK}$ by open substacks of finite type over $k$ as follows. 

We first consider the case without level structure. Let $\xcoch(T)^{+}_{\QQ}$ be the dominant cone in $\xcoch(T)_{\QQ}$. Let $\HN\subset \xcoch(T)^{+}_{\QQ}$ be the subset of $\mu\in \xcoch(T)_{\QQ}^{+}$ satisfying the following integrality condition:  if $L_{\mu}$ is the standard Levi subgroup with roots $\{\a|\j{\a,\mu}=0\}$, then $\mu\in \xcoch(L_{\mu,\ab})_{\QQ}=\xcoch(ZL_{\mu})_{\QQ}\subset\xcoch(T)_{\QQ}$ (here $L_{\mu,\ab}$ is the abelianization of $L_{\mu}$).   Each geometric point $\cE\in \Bun_{G}$ has a canonical $P$-reduction $\cE_{P}$ for a standard parabolic $P\subset G$ (generalizing the Harder-Narasimhan filtration for vector bundles) such that: let $L$ be the Levi quotient of $P$, then the induced $L_{\ab}$-torsor $\cE_{L_{\ab}}=\cE_{P}/(U_{P}[L,L])$ has degree $\deg(\cE_{L_{\ab}})\in \xcoch(L_{\ab})$ that satisfy $\j{\a, \deg(\cE_{L_{\ab}})}>0$ for all roots $\a$ in $U_{P}$. Therefore to each geometric point $\cE\in \Bun_{G}$ we have a well-defined Harder-Narasimhan point $\deg(\cE_{L_{\ab}})\in \HN$.

Consider the partial order on $\HN$ given by: $\mu\le \mu'$ if and only if $\mu'-\mu $ is a $\QQ_{\ge0}$-linear combination of simple coroots. There is an open substack ${}^{\le\mu}\Bun_{G}\subset \Bun_{G}$ whose geometric points have HN point $\le \mu$. It is known that ${}^{\le\mu}\Bun_{G}$ is of finite type over $k$.

It is easy to see that the forgetful map $p_{0}: \Sht^{(I_{1},\cdots, I_{r}), \le \un\l}_{G}\to \Bun_{G}$ recording $\cE_{0}$ is of finite type. Now for $\mu\in \HN$, let ${}^{\le \mu}\Sht^{(I_{1},\cdots, I_{r}), \le \un\l}_{G}=p_{0}^{-1}({}^{\le \mu}\Bun_{G})$, then ${}^{\le \mu}\Sht^{(I_{1},\cdots, I_{r}), \le \un\l}_{G}$ is of finite type over ${}^{\le \mu}\Bun_{G}$, hence of finite type over $k$. As $\mu\in \HN$ varies, $\{{}^{\le \mu}\Sht^{(I_{1},\cdots, I_{r}),\le\un\l}_{G}\}_{\mu\in\HN}$ cover $\Sht^{(I_{1},\cdots, I_{r}),\le\un\l}_{G}$.

For a general level structure $\bK=(\bK_{x})_{x\in \Sig}$, we may choose $n_{x}\in\NN$ such that $\bK_{x}$ contains a principal congruent subgroup $L^{(n_{x})}_{x}G$. Let $D=\sum_{x\in \Sig}n_{x}x$ and $\bK_{D}=(L^{(n_{x})}_{x}G)_{x\in \Sig}$, then $\Sht^{(I_{1},\cdots, I_{r})}_{G,\bK_{D}}\to \Sht^{(I_{1},\cdots, I_{r})}_{G,\bK}$ is a finite \'etale map (see \S\ref{ss:change level}), therefore it suffices to cover $\Sht^{(I_{1},\cdots, I_{r})}_{G,\bK_{D}}$ by open substacks of finite type over $k$. We have the forgetful maps $\Sht^{(I_{1},\cdots, I_{r}),\le\un\l}_{G,\bK_{D}}\xr{p_{0}}\Bun_{G,D}\to \Bun_{G}$. For $\mu\in \HN$, taking ${}^{\le \mu}\Sht^{(I_{1},\cdots, I_{r}),\le\un\l}_{G,\bK_{D}}$ to be the preimage of ${}^{\le\mu}\Bun_{G}$, then ${}^{\le \mu}\Sht^{(I_{1},\cdots, I_{r}),\le\un\l}_{G,\bK_{D}}$ is open in $\Sht^{(I_{1},\cdots, I_{r}),\le\un\l}_{G,\bK_{D}}$ and is of finite type over $k$. As $\mu\in \HN$ varies, ${}^{\le \mu}\Sht^{(I_{1},\cdots, I_{r}),\le\un\l}_{G,\bK_{D}}$ cover $\Sht^{(I_{1},\cdots, I_{r}),\le\un\l}_{G,\bK_{D}}$.

\sss{} Varshavsky \cite[Prop.2.16(1)]{Var} actually shows a more precise result:  each finite type open substack  ${}^{\le\mu}\Sht^{(I_{1},\cdots, I_{r}), \le\un\l}_{G,\bK}$ is the quotient of a  quasi-projective scheme by a finite (constant) group.  The idea of the argument is: by passing to deeper level structures $\bK'_{x}$ (still at points $x\in \Sig$), normal in $\bK_{x}$, one can arrange ${}^{\le\mu}\Bun_{G,\bK'}$ to be a quasi-projective scheme. It is then clear from the diagram \eqref{defn Sht} that  ${}^{\le\mu}\Sht^{(I_{1},\cdots, I_{r}), \le\un\l}_{G,\bK'}$ is a quasi-projective scheme. By the discussion in \S\ref{ss:change level}, the map ${}^{\le\mu}\Sht^{(I_{1},\cdots, I_{r}), \le\un\l}_{G,\bK'}\to {}^{\le\mu}\Sht^{(I_{1},\cdots, I_{r}), \le\un\l}_{G,\bK}$ is a torsor for the finite group $\prod_{s\in \Sig}K_{x}/K'_{x}$, where $K_{x}=\bK_{x}(k_{x})$, and $K'_{x}=\bK'_{x}(k_{x})$.

\subsection{Local model}
One would like to know finer geometric information about the moduli of Shtukas such as its smoothness and its dimension. These can be understood using local models given by  affine Schubert varieties in the affine Grassmannian.

Let $I=I_{1}\coprod\cdots\coprod I_{r}$ be a decomposition into {\em singletons}. Recall we have the relative position map \eqref{ev disj}:
\begin{equation}\label{ev Hk singleton}
\ev^{(I_{1},\cdots, I_{r})}_{\Hk}: \Hk^{(I_{1},\cdots, I_{r})}_{G,\bK}\to (\QGr/\Aut(D))^{I}.
\end{equation}
Note this map is defined over the whole $U^{I}$ since each $I_{j}$ is a singleton. Consider the composition
\begin{equation}\label{ev Sht singleton}
\ev^{(I_{1},\cdots, I_{r})}_{\Sht}: \Sht^{(I_{1},\cdots, I_{r})}_{G,\bK}\xr{} \Hk^{(I_{1},\cdots, I_{r})}_{G,\bK}\xr{\ev^{(I_{1},\cdots, I_{r})}_{\Hk}} (\QGr/\Aut(D))^{I}.
\end{equation}

\begin{prop}\label{p:fm sm} Let $I=I_{1}\coprod\cdots\coprod I_{r}$ be a decomposition into  singletons.  Then the maps \eqref{ev Hk singleton} and \eqref{ev Sht singleton} are formally smooth.
\end{prop}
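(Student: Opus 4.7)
The plan is to verify formal smoothness by the infinitesimal lifting criterion. Let $i: S \hookrightarrow S'$ be a square-zero closed immersion of affine $k$-schemes with ideal $J$, and let $\xi$ be an $S$-point of $\mathcal{M}$ (where $\mathcal{M}$ denotes either $\Hk^{(I_{1},\ldots,I_{r})}_{G,\bK}$ or $\Sht^{(I_{1},\ldots,I_{r})}_{G,\bK}$) together with a lift $\eta'$ of $\ev(\xi)$ to $(\QGr_{G}/\Aut(D))^{I}(S')$. I must produce an $S'$-point of $\mathcal{M}$ extending $\xi$ and covering $\eta'$.

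First, the Hecke case. Write $\xi = ((x_{i}^{S}), (\cE_{j}^{S}), (\alpha_{j}^{S}))$. I would first lift each leg $x_{i}^{S}$ to some $x_{i}^{S'}: S' \to U$, which is possible because $U$ is smooth. Next I lift the initial bundle $\cE_{0}^{S}$ to $\cE_{0}^{S'} \in \Bun_{G,\bK}(S')$, possible because $\Bun_{G,\bK}$ is smooth. For each $j = 1, \ldots, r$ in turn, the datum $\eta'$ specifies a local modification type at $x_{j}^{S'}$ (after a choice of uniformizer absorbed in the $\Aut(D)$-quotient) lifting the relative position of $\alpha_{j}^{S}$. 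Applying Beauville--Laszlo, I glue $\cE_{j-1}^{S'}|_{(X \times S') \setminus \Gamma(x_{j}^{S'})}$ with the local modification of $\cE_{j-1}^{S'}|_{\wh{D}_{x_{j}^{S'}}}$ dictated by $\eta'_{j}$, obtaining $\cE_{j}^{S'}$ together with $\alpha_{j}^{S'}$. This reduces to $\xi$ over $S$ and covers $\eta'$ by construction, yielding the required lift for $\Hk$.

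Second, the Shtuka case. Here $\xi$ additionally carries $\iota^{S}: \cE_{r}^{S} \isom {}^{\tau}\cE_{0}^{S}$, and we must produce a compatible $\iota^{S'}$. The key input is the \emph{infinitesimal triviality of Frobenius}: since $J^{q} \subseteq J^{2} = 0$ for $q \ge 2$, the absolute Frobenius $\Fr_{S'}$ annihilates $J$ and therefore factors as $S' \twoheadrightarrow S \to S'$. Consequently, ${}^{\tau}\cE_{0}^{S'} = (\id_{X} \times \Fr_{S'})^{*}\cE_{0}^{S'}$ depends only on $\cE_{0}^{S}$, not on its chosen lift. Construct $\cE_{r}^{S'}$ from some lift $\cE_{0}^{S'}$ as in the Hecke case. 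The set of lifts of $\cE_{0}^{S}$ is a torsor under $H^{1}(X \times S, \frg_{\cE_{0}^{S}} \otimes J)$, where $\frg_{\cE_{0}}$ is the adjoint bundle; the Hecke construction (with local data held fixed) induces an isomorphism of deformation torsors between lifts of $\cE_{0}^{S}$ and lifts of $\cE_{r}^{S}$. Thus, as $\cE_{0}^{S'}$ varies, $\cE_{r}^{S'}$ ranges over every lift of $\cE_{r}^{S}$, while ${}^{\tau}\cE_{0}^{S'}$ stays rigid. The obstruction to extending $\iota^{S}$ is the difference class $[\cE_{r}^{S'}] - [{}^{\tau}\cE_{0}^{S'}] \in H^{1}(X \times S, \frg_{\cE_{0}^{S}} \otimes J)$, which can therefore be killed by suitably adjusting $\cE_{0}^{S'}$; once it vanishes, $\iota^{S}$ lifts.

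The main obstacle is the transversality argument at the end of the Shtuka case. It rests essentially on $d\Fr = 0$ --- the characteristic-$p$ phenomenon which makes ${}^{\tau}(\cdot)$ infinitesimally rigid on $\Bun_{G,\bK}$. Without this input the obstruction to the Shtuka condition could not be absorbed by the freedom in lifting $\cE_{0}^{S}$; it is precisely this feature that makes Shtukas a well-behaved moduli problem in equal positive characteristic.
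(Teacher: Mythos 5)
Your proposal isolates exactly the right key input: since $J^{2}=0$, the Frobenius of $S'$ kills $J$ and factors through $S$, so ${}^{\t}\cE_{0}^{S'}$ depends only on $\cE_{0}^{S}$ and not on the chosen lift. The Hecke case matches the paper's argument (lift the legs, lift one end of the chain, propagate by Beauville--Laszlo gluing against the prescribed local data; the only point worth making explicit is that at each step one must also lift the local trivialization $\cE_{j}|_{D_{x_{j}}}\cong\cG'_{j}|_{D_{R}}$, which is possible because $L^{+}G$ is formally smooth). In the Shtuka case, however, you take a detour the paper avoids: you build $\cE_{r}^{S'}$ \emph{forward} from a lift of $\cE_{0}^{S}$ and then try to match it with the rigid ${}^{\t}\cE_{0}^{S'}$ by varying that lift, invoking an ``isomorphism of deformation torsors'' between lifts of $\cE_{0}^{S}$ and lifts of $\cE_{r}^{S}$. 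That is the one step you assert rather than prove, and as stated it is not quite well posed: the forward construction depends on auxiliary choices of lifts of the local trivializations, so it does not define a canonical map on lifts, and the two torsors live under $H^{1}$ of the adjoint bundles of $\cE_{0}$ and $\cE_{r}$, which are identified only away from the legs, so the comparison of groups itself needs an argument. The paper sidesteps all of this: since ${}^{\t}\cE_{0}^{S'}$ is canonically determined, it simply \emph{defines} $\cE_{r}^{S'}:=(\id_{X}\times\ph)^{*}\cE_{0}^{S}$ (where $\ph$ is the factorization of $\Fr_{S'}$ through $S$), takes $\io'=\id$, and constructs $\cE_{j-1}^{S'}$ from $\cE_{j}^{S'}$ by \emph{descending} induction $j=r,\dots,1$ via the same gluing; no obstruction class ever appears. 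Your argument can be repaired --- the surjectivity you need, that every lift of $\cE_{r}^{S}$ arises from some lift of $\cE_{0}^{S}$, is proved precisely by this backward gluing --- but at that point you are running the paper's proof, so you may as well start the chain at $\cE_{r}^{S'}$.
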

\begin{proof}[Sketch of proof] 

We first show that \eqref{ev Sht singleton} is formally smooth.

Recall $D=\Spec k\tl{t}$. Let $\wh U\to U$ be the canonical $\Aut(D)$-torsor  $(x,t)$ where $t$ is a local parameter at $x\in U$. Then we have a map
\begin{equation*}
\wh\ev: \wh\Sht^{(I_{1},\cdots, I_{r})}_{G,\bK}:=\wh U^{I}\times_{U^{I}}\Sht^{(I_{1},\cdots, I_{r})}_{G,\bK}\to  \wh U^{I}\times \QGr^{I}
\end{equation*}
such that $\ev^{(I_{1},\cdots, I_{r})}$ is obtained by projecting to $\QGr^{I}$ and quotienting by $\Aut(D)^{I}$ on both sides. Therefore it suffices to show $\wh\ev$ is formally smooth.

For notational simplicity we only deal with the case without level structure, and the general case is similar.  We may identify $I$ with $\{1,\cdots, r\}$ so that $I_{j}=\{j\}$. Let $R'$ be a $k$-algebra, $J\subset R'$ a square-zero ideal and $R=R'/J$. Suppose we are given $R'$-points $(x'_{j}, t'_{j})\in \wh X(R')$ for $1\le j\le r$,  $R'$-points $\z'_{j}=(\cF'_{j}\dxr{} \cG'_{j})\in \QGr(R')$ for $1\le j\le r$ (where $(\cF'_{j}, \cG'_{j})$ is a pair of $G$-torsors over $D_{R'}$ with an isomorphism over $D^{\times}_{R'}$), and an $R$-point $\xi=((x_{j},t_{j})_{1\le j\le r}, (\cE_{j})_{0\le j\le r}, (\a_{j})_{1\le j\le r}, \io)$ of $\wh \Sht^{(I_{1},\cdots, I_{r})}_{G}$ such that $\wh\ev(\xi)=((x'_{j}, t'_{j},\z'_{j})|_{\Spec R})_{1\le j\le r}$. In particular, $(x_{j},t_{j})=(x'_{j}, t'_{j})|_{\Spec R}$. We want to lift $\xi$ to an $R'$-point $\xi'$ of $\wh \Sht^{(I_{1},\cdots, I_{r})}_{G}$ such that $\wh\ev(\xi')=(x'_{j}, t'_{j}, \z'_{j})_{1\le j\le r}$. This means we need to lift $\cE_{j}$ to a $G$-torsor $\cE'_{j}$ over $X_{R'}$ for $0\le j\le r$, lift the isomorphisms $\a_{j}$ to $\a'_{j}: \cE'_{j-1}|_{X_{R'}-\G(x'_{j})}\isom \cE'_{j}|_{X_{R'}-\G(x'_{j})}$ for $1\le j\le r$, and lift the isomorphism $\io: \cE_{r}\cong(\id_{X}\times \Fr_{R})^{*}\cE_{0}$ to an isomorphism $\io': \cE'_{r}\cong(\id_{X}\times \Fr_{R'})^{*}\cE'_{0}$.

First, $\cE'_{r}$ is already determined by $\cE_{0}$ as follows. Since $J^{2}=0$ is a square-zero thickening, $\Fr_{R'}$ factors as $R'\surj R\xr{\ov\Fr_{R'}} R'$. Let $\ph: \Spec R'\to \Spec R$ be the map given by $\ov\Fr_{R'}$. If $\cE'_{0}$ is any extension of $\cE_{0}$ to $X_{R'}$, we have $(\id_{X}\times \Fr_{R'})^{*}\cE'_{0}=(\id_{X}\times \ph)^{*}\cE_{0}$. Therefore, in order to have the desired isomorphism $\io': \cE'_{r}\cong {}^{\t}\cE'_{0}$, we must define $\cE'_{r}$ to be $(\id_{X}\times \ph)^{*}\cE_{0}$.

Next we extend the modifications $\a_{j}: \cE_{j-1}\dxr\cE_{j}$ along $\G(x_{j})$ to a modification $\a'_{j}: \cE'_{j-1}\dxr\cE'_{j}$ along $\G(x'_{j})$. We do this inductively, starting from the case $j=r$. In each step we are given $\cE'_{j}$ over $X_{R'}$ and try to construct $\cE'_{j-1}$ and $\a'_{j}$. Use $t'_{j}$ to identify $D_{x'_{j}}$ with $D_{R'}$, hence also $D_{x_{j}}$ with $D_{R}$. The isomorphism $\wh\ev(\xi)=(x_{j}, t_{j},\z'_{j}|_{\Spec R})_{1\le j\le r}$ gives isomorphisms between diagrams
\begin{equation*}
\xymatrix{\cE_{j-1}|_{D_{x_{j}}}\ar[d]^{\e_{j-1}}_{\wr}\ar@{-->}[r]^{\a_{j}} & \cE_{j}|_{D_{x_{j}}}\ar[d]^{\e_{j}}_{\wr}\\
\cF'_{j}|_{D_{R}}\ar@{-->}[r]^{\z_{j}} & \cG'_{j}|_{D_{R}}
}
\end{equation*}
We first lift the isomorphism $\e_{j}$ to $\e'_{j}: \cE'_{j}|_{D_{x'_{j}}}\cong \cG'_{j}$ (which is possible because $L^{+}G$ is formally smooth). Then we define $\cE'_{j-1}$ to be the gluing of $\cE'_{j}|_{X_{R'}-\G(x'_{j})}$ and $\cF'_{j}$ along the isomorphism $\cE'_{j}|_{D^{\times}_{x'_{j}}}\xr{\e'_{j}} \cG'_{j}|_{D^{\times}_{R'}}\xr{\z_{j}^{-1}}\cF'_{j}|_{D^{\times}_{R'}}$. Then we have a canonical isomorphism $\cE'_{j-1}|_{X_{R}}\cong \cE_{j-1}$ because the latter is glued from $\cE_{j}$ and $\cF'_{j}|_{D_{R}}$ along $\cE_{j}|_{D^{\times}_{x_{j}}}\xr{\e_{j}}\cG'_{j}|_{D^{\times}_{R}}\xr{\z_{j}^{-1}}\cF'_{j}|_{D^{\times}_{R}}$. This completes the inductive construction of $\cE'_{j}$. Therefore \eqref{ev Sht singleton} is formally smooth.

To show that \eqref{ev Hk singleton} is formally smooth, the argument is similar: simply start with any lifting $\cE'_{r}$ of $\cE_{r}$ to $X_{R'}$, which exists because $\Bun_{G}$ is smooth.
\end{proof}

Passing to bounded versions, the formal smoothness proved in Prop. \ref{p:fm sm} implies the smoothness of the bounded version of the relative position map. More strongly, affine Grassmannian serve as \'etale local model for the moduli stack in the following sense.


\begin{prop}[{\cite[Theorem 2.20]{Var}},{\cite[Theorem 2.11]{VLaff}}]\label{p:local model} Let $I=I_{1}\coprod\cdots\coprod I_{r}$ be a decomposition of the finite set $I$, and $\un\l\in (\xcoch(T)^{+})^{I}$. 

\begin{enumerate}
\item The stack $\Sht^{(I_{1},\cdots, I_{r}), \le \un\l}_{G,\bK}$ is \'etale locally isomorphic to $\Gr^{(I_{1},\cdots, I_{r}), \le\un\l}_{G}$ over $U^{I}$, which in turn is \'etale locally isomorphic to the product $\prod_{j=1}^{r}\Gr^{I_{j},\le\un\l_{j}}_{G}$ (where $\un\l_{j}=\{\l_{i}\}_{i\in I_{j}}$). 
\item In particular, if $\un\l$ is $G$-admissible, then 
\begin{equation*}
\dim \Sht^{(I_{1},\cdots, I_{r}),\le\un\l}_{G,\bK}=\sum_{i\in I}(\j{2\r, \l_{i}}+1).
\end{equation*}
\item Let $I_{j}=I_{j,1}\coprod\cdots\coprod I_{j,s_{j}}$ be a decomposition for each $j$, and let  $I=I'_{1}\coprod\cdots I'_{r'}$ be the resulting refinement of $I=I_{1}\coprod\cdots\coprod I_{r}$. Then the morphism $\pi^{(I'_{1},\cdots, I'_{r'}),\le\un\l}_{(I_{1},\cdots, I_{r})}: \Sht^{(I'_{1},\cdots, I'_{r'}), \le \un\l}_{G,\bK}\to \Sht^{(I_{1},\cdots, I_{r}), \le \un\l}_{G,\bK}$ is \'etale locally on the target isomorphic to the product of the natural maps $\prod_{j=1}^{r}\Gr^{(I_{j,1},\cdots, I_{j,s_{j}}),\le\un\l_{j}}_{G}\to \prod_{j=1}^{r}\Gr^{I_{j},\le\un\l_{j}}_{G}$.
\end{enumerate}
\end{prop}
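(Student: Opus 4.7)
The plan is to first prove part (1) via an \'etale-local isomorphism between $\Sht$ and a product of Beilinson-Drinfeld Grassmannians; parts (2) and (3) will then follow as corollaries. The key technical input is the formal smoothness of the evaluation map $\ev_{\Sht}^{(I_{1},\cdots,I_{r})}$ established in Prop.~\ref{p:fm sm}, combined with the proof technique used there.

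For part (1), I would introduce the auxiliary moduli stack $\wt\Sht$ parametrizing tuples $(\xi, (t_{i})_{i\in I}, \t)$, where $\xi \in \Sht^{(I_{1},\cdots,I_{r}),\le\un\l}_{G,\bK}(S)$, each $t_{i}$ is a formal coordinate along $\G(x_{i})$ identifying $\wh\cO_{X\times S,\G(x_{i})}\cong \cO_{S}\tl{t}$, and $\t$ is a trivialization of $\cE_{0}$ on the formal neighborhood $\wh D$ of $\cup_{i}\G(x_{i})$. The forgetful map $f:\wt\Sht \to \Sht^{(I_{1},\cdots,I_{r}),\le\un\l}_{G,\bK}$ is \'etale-locally a torsor for $\prod_{i\in I}(L^{+}_{x_{i}}G\rtimes \Aut(D_{x_{i}}))$. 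On the other hand, there is a ``restrict to $\wh D$'' map
\begin{equation*}
g:\wt\Sht \to \prod_{j=1}^{r}\Gr^{I_{j},\le\un\l_{j}}_{G},
\end{equation*}
where we use $\t$ to trivialize $\cE_{0}|_{\wh D}$, use ${}^{\t}\t\c\io$ to trivialize $\cE_{r}|_{\wh D}$, and record the restrictions of the $(\cE_{j},\a_{j})$ to $\wh D$ grouped according to the decomposition $I=\coprod_{j} I_{j}$. The inductive extension argument in the proof of Prop.~\ref{p:fm sm}, which reconstructs a Shtuka from formal-disk data and a choice of global $\cE_{0}$, shows that $g$ is also \'etale-locally a torsor for the same group. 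The second \'etale-local isomorphism, comparing $\Gr^{(I_{1},\cdots,I_{r}),\le\un\l}_{G}$ to $\prod_{j}\Gr^{I_{j},\le\un\l_{j}}_{G}$ over $U^{I}$, is the standard factorization property of the BD Grassmannian, applied to each cluster $I_{j}$ treated as a single ``fat'' leg (clusters lying over disjoint points when evaluated on disjoint loci in $U^{I}$).

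Part (2) follows immediately: the classical formula $\dim \Gr_{G,\l}=\j{2\r,\l}$ over a geometric point of $X$ gives $\dim \Gr^{I_{j},\le\un\l_{j}}_{G}=\sum_{i\in I_{j}}\j{2\r,\l_{i}}+|I_{j}|$ (including the $|I_{j}|$-dimensional base), and summing over $j$ yields the claimed dimension, which transfers to $\Sht$ by the \'etale-local isomorphism established in (1); non-emptiness under $G$-admissibility is guaranteed by the preceding lemma. For part (3), the common cover $\wt\Sht$ constructed in (1) is manifestly compatible with refinement of the partition, since the data $((\cE_{j}), \t, (t_{i}))$ is recorded identically regardless of how the modifications $\a_{j}$ are grouped. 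Thus the refinement morphism $\pi^{(I'_{1},\cdots,I'_{r'}),\le\un\l}_{(I_{1},\cdots,I_{r})}$ is \'etale-locally pulled back from the natural refinement map on BD Grassmannians, which by the factorization property is in turn \'etale-locally the product $\prod_{j}(\Gr^{(I_{j,1},\cdots,I_{j,s_{j}}),\le\un\l_{j}}_{G}\to \Gr^{I_{j},\le\un\l_{j}}_{G})$.

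The main obstacle is verifying that $g$ is genuinely a torsor for $\prod_{i}(L^{+}_{x_{i}}G\rtimes \Aut(D_{x_{i}}))$; surjectivity is the crux. Given a point of $\prod_{j}\Gr^{I_{j},\le\un\l_{j}}_{G}$, which includes formal-disk bundles and modifications, one must globalize to an $S$-point of $\Sht$ with compatible trivializations. This is where the inductive construction from Prop.~\ref{p:fm sm} is essential: one chooses an extension of the trivialized $\cE_{0}|_{\wh D}$ to a $G$-torsor $\cE_{0}$ on $X\times S$ with $\bK$-level structure (the $\Aut$-twist $L^{+}G$-torsor structure in the fiber of $g$), then $\cE_{r}$ is forced to be ${}^{\t}\cE_{0}$, and each $\cE_{j-1}$ is built from $\cE_{j}$ by gluing with the BD modification data along $\cup_{i\in I_{j}}\G(x_{i})$. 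The identification of fibers of $f$ and $g$ with the same group torsor — hence the \'etale local isomorphism between $\Sht$ and the BD Grassmannian — crucially uses that Frobenius has zero derivative, so that extensions of $\cE_{0}$ beyond the formal neighborhood are parametrized by the same data on both sides.
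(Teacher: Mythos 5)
The paper itself does not prove this proposition; it defers to \cite[Theorem 2.20]{Var} and \cite[Theorem 2.11]{VLaff} and records only that ``the proof boils down to a tangential calculation and uses that $d\Fr=0$.'' Your roof construction $\wt\Sht$ is in the spirit of those proofs, but the load-bearing claim --- that $g$ is \'etale-locally a torsor under $\prod_{i}(L^{+}_{x_{i}}G\rtimes\Aut(D_{x_{i}}))$, i.e.\ under the \emph{same} group as $f$ --- is false, and the failure is exactly where the content of the theorem lives. The fiber of $g$ over a geometric point of the Grassmannian consists not only of the trivialization data but also of a choice of global $G$-torsor $\cE_{0}$ with $\bK$-level structure satisfying the Frobenius fixed-point condition. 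Your reconstruction picks an arbitrary extension $\cE_{0}$ of the trivialized formal-disk bundle, sets $\cE_{r}={}^{\t}\cE_{0}$, and glues downward using the Beilinson--Drinfeld data; but the bundle $\cE_{0}'$ produced at the bottom of this gluing has no reason to coincide with the $\cE_{0}$ you started from --- that coincidence \emph{is} the Shtuka condition, and it fails for most choices. (In Prop.~\ref{p:fm sm} this circularity is broken only infinitesimally: over a square-zero extension ${}^{\t}\cE_{0}'$ is independent of the lift of $\cE_{0}$, so one may define $\cE_{0}'$ \emph{after} the gluing. That trick does not globalize.) Concretely, for $G=\Gm$, $I=\{1\}$, $\l=0$, the fiber of $g$ over a point of $\Gr^{\{1\},\le 0}_{\Gm}=U$ is a disjoint union over $\Pic_{X}(\FF_{q})$ of $L^{+}\Gm$-torsors --- not a torsor --- so the fibers of $f$ and $g$ cannot be identified as you claim. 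A secondary gap: with only a trivialization of $\cE_{0}|_{\wh D}$ (and the induced one of $\cE_{r}$), the intermediate bundles $\cE_{1},\dots,\cE_{r-1}$ are not trivialized on the formal disks, so $g$ lands in the iterated $\Gr^{(I_{1},\cdots,I_{r}),\le\un\l}_{G}$, not in $\prod_{j}\Gr^{I_{j},\le\un\l_{j}}_{G}$; the passage from the convolution Grassmannian to the product is a separate \'etale-local statement (proved by trivializing, step by step, the $L^{+}G$-torsor of formal trivializations of each $\cE_{j}$), not a consequence of factorization over the disjoint locus.

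The repair follows the cited sources. First truncate: replace $L^{+}G$ along $\cup\G(x_{i})$ by the finite-dimensional jet group $H_{n}$ for $n$ large relative to $\un\l$, so that ``torsor'' and ``smooth of relative dimension $\dim H_{n}$'' are meaningful and \'etale-local sections exist. The map one analyzes is not your $g$ but the well-defined relative position map $\Sht^{(I_{1},\cdots,I_{r}),\le\un\l}_{G,\bK}\to \Gr^{(I_{1},\cdots,I_{r}),\le\un\l}_{G}/H_{n}$: Prop.~\ref{p:fm sm} (formal smoothness, via $d\Fr=0$) together with finite presentation shows it is smooth of relative dimension $\dim H_{n}$, the same as the torsor $\Gr^{(I_{1},\cdots,I_{r}),\le\un\l}_{G}\to \Gr^{(I_{1},\cdots,I_{r}),\le\un\l}_{G}/H_{n}$. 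One then lifts \'etale-locally on $\Sht$ to a map into $\Gr^{(I_{1},\cdots,I_{r}),\le\un\l}_{G}$ and checks that the lift is \'etale by identifying relative tangent complexes with $\Lie H_{n}$ on both sides --- this is the ``tangential calculation'' the paper alludes to, and it is the step your torsor language was meant to replace but does not. Parts (2) and (3) then do follow as you say, granting the corrected version of (1) and the \'etale-local product decomposition of the convolution Grassmannian.
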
 
The proof eventually boils down to a tangential calculation, and uses that the differential of the Frobenius map is zero. 
 
If $\l_{i}$ are minuscule, then $\Gr^{\le\l_{i}}_{G}$ are smooth, and hence $\Gr^{(I_{1},\cdots, I_{r}), \le\un\l}_{G}$ is smooth over $U^{I}$. In this case, Prop. \ref{p:local model} implies:
\begin{cor}\label{c:Sht sm} Let $I=I_{1}\coprod\cdots\coprod I_{r}$ be a decomposition into  singletons, and $\l_{i}\in \xcoch(T)^{+}$ be a minuscule dominant coweight for all $i\in I$. Then $\Sht^{(I_{1},\cdots, I_{r}),\le\un\l}_{G,\bK}$ is smooth of pure relative dimension $\sum_{i\in I}\j{2\r, \l_{i}}$ over $U^{I}$.
\end{cor}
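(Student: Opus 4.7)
The plan is to apply Proposition \ref{p:local model} to reduce smoothness of $\Sht^{(I_{1},\cdots,I_{r}),\le\un\l}_{G,\bK}$ over $U^{I}$ to the corresponding question for the product of Beilinson-Drinfeld Grassmannians $\prod_{j=1}^{r}\Gr^{I_{j},\le \un\l_{j}}_{G}$ over $U^{I}$. Since étale local isomorphisms preserve smoothness and relative dimension, it suffices to check that each factor $\Gr^{\{i\},\le\l_{i}}_{G}\to U$ is smooth of pure relative dimension $\j{2\r,\l_{i}}$.

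The key geometric input is that for a minuscule dominant coweight $\l_{i}$, the partial order on $\xcoch(T)^{+}$ degenerates: the only $\l'\in\xcoch(T)^{+}$ with $\l'\le\l_{i}$ is $\l_{i}$ itself. Hence the affine Schubert variety $\Gr_{G,\le\l_{i}}$ equals the single $L^{+}G$-orbit $\Gr_{G,\l_{i}}$, which is isomorphic to the partial flag variety $G/P_{\l_{i}}$ (where $P_{\l_{i}}$ is the parabolic stabilizing $t^{\l_{i}}$); in particular it is smooth and projective of dimension $\j{2\r,\l_{i}}$. To globalize this fiberwise statement over $U$, I would use the Beilinson-Drinfeld construction together with an étale-local trivialization of a formal disk on $X$ (choose an étale map from an open of $U$ to $\AA^{1}$): after such a trivialization, $\Gr^{\{i\},\le\l_{i}}_{G}$ becomes the product $\Gr_{G,\le\l_{i}}\times U'$ for $U'\subset U$ étale, which is manifestly smooth of pure relative dimension $\j{2\r,\l_{i}}$ over $U'$.

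Taking the product over $j=1,\ldots,r$ yields smoothness of $\prod_{j}\Gr^{I_{j},\le\un\l_{j}}_{G}$ over $U^{I}$ of pure relative dimension $\sum_{i\in I}\j{2\r,\l_{i}}$, and transporting back via Proposition \ref{p:local model}(1) gives the claim. Note that the absolute dimension is already forced by Proposition \ref{p:local model}(2) (which asserts $\dim\Sht^{(I_{1},\cdots,I_{r}),\le\un\l}_{G,\bK}=\sum_{i}(\j{2\r,\l_{i}}+1)$) combined with $\dim U^{I}=|I|$, so the only real content beyond the local model theorem is smoothness. The main (and essentially only) obstacle is the minuscule smoothness fact, which is classical and does not require further work; everything else is a bookkeeping application of the already-stated local model result.
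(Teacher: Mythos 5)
Your argument is correct and is essentially the paper's own: the corollary is deduced from Proposition \ref{p:local model} together with the classical fact that for minuscule $\l_{i}$ the Schubert variety $\Gr_{G,\le\l_{i}}=\Gr_{G,\l_{i}}$ is smooth (being a partial flag variety), so that $\Gr^{(I_{1},\cdots,I_{r}),\le\un\l}_{G}$ is smooth over $U^{I}$. You simply spell out the minuscule smoothness and the \'etale-local triviality of the Beilinson--Drinfeld Grassmannian in more detail than the paper does.
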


\subsection{Factorization structure}\label{ss:geom fact}
For a collection of geometric objects $A_{I}$ (stacks or sheaves) over $U^{I}$, one  for each finite set $I$, a factorization structure on $\{A_{I}\}$ usually has two aspects: the first is the behavior of $A_{I}$ when restricted to $U^{I}_{\disj}$ (for example, if $A_{I}$ is a stack over $U^{I}$, we may require $A_{I}|_{U^{I}_{\disj}}$ to be equipped with an isomorphism with $(\prod_{i\in I}A_{\{i\}})|_{U^{I}_{\disj}}$); the second is the behavior of $A_{I}$ over the partial diagonals (for example, we may require $A_{I}|_{\D(U)}\cong A_{\{1\}}$).  A typical example of such a factorization structure is carried by the Beilinson-Drinfeld Grassmannian $\Gr^{I}$ over $X^{I}$. 

In the case of Shtukas, the first aspect mentioned above holds only in a weak sense, which is essentially the content of Proposition \ref{p:fm sm} and Proposition \ref{p:local model}. We spell out the second aspect.

For any stack $\frX$, and any map of finite sets $\th: I\to J$, we have the induced map
\begin{equation*}
\D_{\th, \frX}: \frX^{J}\to \frX^{I}
\end{equation*}
sending  $(x_{j})_{j\in J}$ to $(x_{\th(i)})_{i\in I}$. When $\th$ is a surjection, $\D_{\th,\frX}$ is a partial diagonal in $\frX^{I}$.

The following property of the Hecke stack is clear from the definitions. If $\th: I\surj J$ is a surjection of finite sets, then there is a canonical isomorphism
\begin{equation}\label{Hk diagonal}
\Hk^{I}_{G,\bK}|_{\D_{\th,U}(U^{J})}\cong \Hk^{J}_{G,\bK}.
\end{equation}
Moreover, this isomorphism is compatible with a composition of surjections $I\surj J\surj K$.

In particular, taking $J$ to be a singleton, we conclude that
\begin{equation*}
\Hk^{I}_{G,\bK}|_{\D(U)}\cong \Hk^{\{1\}}_{G,\bK}.
\end{equation*}

Similar properties hold for $\Sht^{I}_{G,\bK}$. If $\th: I\surj J$ is a surjection of finite sets, then there is a canonical isomorphism
\begin{equation*}
\Sht^{I}_{G,\bK}|_{\D_{\th,U}(U^{J})}\cong \Sht^{J}_{G,\bK}.
\end{equation*}
Moreover, this isomorphism is compatible with a composition of surjections $I\surj J\surj K$.

In particular, taking $J$ to be a singleton, we conclude that
\begin{equation*}
\Sht^{I}_{G,\bK}|_{\D(U)}\cong \Sht^{\{1\}}_{G,\bK}.
\end{equation*}

\subsection{Change of level structure}\label{ss:change level}

\sss{Level raising} For each $x\in \Sig$, let $\bK^{\#}_{x}\lhd \bK_{x}$ be a normal subgroup with finite dimensional quotient $\bL_{x}=\bK_{x}/\bK^{\#}_{x}$. Let $\bK^{\#}=(\bK^{\#}_{x})_{x\in \Sig}$. Then the natural map
\begin{equation*}
\Sht^{(I_{1},\cdots, I_{r})}_{G,\bK^{\#}} \to \Sht^{(I_{1},\cdots, I_{r})}_{G,\bK} 
\end{equation*}
is a torsor for the finite group $L:=\prod_{x\in \Sig}\bL_{x}(k_{x})$. One can take the projective limit over such normal subgroups  $\bK^{\#}$ to get a stack over $U^{I}$
\begin{equation*}
\Sht^{(I_{1},\cdots, I_{r})}_{G,\infty\Sig} :=\varprojlim_{\bK^{\#}}\Sht^{(I_{1},\cdots, I_{r})}_{G,\bK^{\#}}.
\end{equation*}
The projection
\begin{equation*}
\Sht^{(I_{1},\cdots, I_{r})}_{G,\infty\Sig}\to \Sht^{(I_{1},\cdots, I_{r})}_{G,\bK}
\end{equation*}
is a pro-\'etale map that is a torsor for $K=\prod_{x\in\Sig}K_{x}$ where $K_{x}=\bK_{x}(k_{x})$. In particular, we have:

\begin{lemma}\label{l:change level Sig}
Let $\bK'_{x}\subset \bK_{x}$ be level groups for each $x\in \Sig$, and let  $\bK'=(\bK'_{x})_{x\in \Sig}$. Then the natural map
\begin{equation*}
\Sht^{(I_{1},\cdots, I_{r})}_{G,\bK'} \to \Sht^{(I_{1},\cdots, I_{r})}_{G,\bK} 
\end{equation*}
is finite \'etale whose geometric fibers are in bijection with cosets $\prod_{x\in \Sig}K_{x}/K'_{x}$ (where $K'_{x}=\bK'_{x}(k_{x})$). 
\end{lemma}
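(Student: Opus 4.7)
My plan is to reduce to the level-raising torsor construction stated immediately before the lemma, by inserting a common deeper normal level structure. For each $x\in\Sig$, the definition of a level group (\S\ref{sss:level}) guarantees that both $\bK_x$ and $\bK'_x$ contain a principal congruence subgroup $L^{(n)}_{x}G$ as a normal subgroup with finite type quotient. Choosing $n_x$ large enough, I set $\bK^{\#}_{x}:=L^{(n_{x})}_{x}G$, so that $\bK^{\#}_{x}\subset \bK'_{x}\subset \bK_{x}$ with $\bK^{\#}_{x}\lhd \bK_{x}$ and the quotient $\bK_{x}/\bK^{\#}_{x}$ of finite type over $k_{x}$. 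Normality of $\bK^{\#}_{x}$ in $\bK_{x}$ immediately implies normality in the intermediate subgroup $\bK'_{x}$, so $\bK'_{x}/\bK^{\#}_{x}$ is a finite type $k_{x}$-group as well.

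Now apply the preceding level-raising paragraph twice, to the pairs $(\bK,\bK^{\#})$ and $(\bK',\bK^{\#})$. This yields that
\begin{equation*}
\Sht^{(I_{1},\cdots, I_{r})}_{G,\bK^{\#}} \to \Sht^{(I_{1},\cdots, I_{r})}_{G,\bK}
\end{equation*}
is a finite étale torsor for $L=\prod_{x\in\Sig}(\bK_{x}/\bK^{\#}_{x})(k_{x})$, and likewise
\begin{equation*}
\Sht^{(I_{1},\cdots, I_{r})}_{G,\bK^{\#}} \to \Sht^{(I_{1},\cdots, I_{r})}_{G,\bK'}
\end{equation*}
is a finite étale torsor for $L'=\prod_{x\in\Sig}(\bK'_{x}/\bK^{\#}_{x})(k_{x})$. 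Since these truncated level quotients are smooth $k_{x}$-groups, Lang's theorem identifies these finite groups with $\prod_{x}K_{x}/K^{\#}_{x}$ and $\prod_{x}K'_{x}/K^{\#}_{x}$ respectively, where $K^{\#}_{x}=\bK^{\#}_{x}(k_{x})$.

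The map $\Sht^{(I_{1},\cdots, I_{r})}_{G,\bK'}\to \Sht^{(I_{1},\cdots, I_{r})}_{G,\bK}$ is then identified with the quotient of the $L$-torsor $\Sht^{(I_{1},\cdots, I_{r})}_{G,\bK^{\#}}\to \Sht^{(I_{1},\cdots, I_{r})}_{G,\bK}$ by the induced free action of the subgroup $L'\subset L$. Such a quotient of a finite étale torsor under a finite group by a subgroup is again finite étale, of degree $[L:L']$, with geometric fiber the coset space $L/L'$. Finally, the canonical bijection $L/L'\cong \prod_{x\in\Sig}K_{x}/K'_{x}$ (since the quotients are taken factor by factor) yields the claimed description of geometric fibers.

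The only mildly delicate point is ensuring the common auxiliary level structure $\bK^{\#}$ is simultaneously normal in both $\bK$ and $\bK'$ with finite-type quotients; this is handled by choosing $\bK^{\#}_{x}$ to be a sufficiently deep principal congruence subgroup and invoking the defining normality clause for level groups. Everything else is a formal torsor computation, and no essentially new geometric input beyond the already-established level-raising statement is required.
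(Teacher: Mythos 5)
Your proof is correct and follows essentially the same route as the paper, which derives the lemma as an immediate consequence of the level-raising torsor statement (the paper passes to the inverse limit $\Sht^{(I_{1},\cdots,I_{r})}_{G,\infty\Sig}$, a pro-\'etale $K$-torsor, and quotients by $K'$, whereas you pick a single sufficiently deep common normal congruence subgroup $\bK^{\#}$ — a cosmetic difference). Your explicit attention to the normality of $\bK^{\#}_{x}$ in both $\bK_{x}$ and $\bK'_{x}$, and to Lang's theorem identifying $(\bK_{x}/\bK^{\#}_{x})(k_{x})$ with $K_{x}/K^{\#}_{x}$, fills in details the paper leaves implicit.
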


\sss{Full level at $x$}\label{sss:full x}
If $x\notin \Sig$, we can define a stack over $(U-\{x\})^{I}$
\begin{equation*}
\Sht^{(I_{1},\cdots, I_{r})}_{G,\bK,\infty x}=\varprojlim_{\bK^{\da}_{x}}\Sht^{(I_{1},\cdots, I_{r})}_{G,\bK^{\da}}
\end{equation*}
where $\bK^{\da}$ denotes the level structure along $\{x\}\cup \Sig$, with $\bK$ on $\Sig$ and $\bK^{\da}_{x}$ at $x$. This is a $G(\cO_{x})$-torsor over $\Sht^{(I_{1},\cdots, I_{r})}_{G, \bK}|_{(U-\{x\})^{I}}$.

We claim that the $G(\cO_{x})$ action on $\Sht^{(I_{1},\cdots, I_{r})}_{G, \bK,\infty x}$ extends canonically to an action of $G(F_{x})$. Indeed, suppose $\xi=((x_{i})_{i\in I}, (\cE_{j})_{0\le j\le r}, (\a_{j})_{1\le j\le r}, \io, (\ka_{j})_{0\le j\le r})$ is an $R$-point of $\Sht^{(I_{1},\cdots, I_{r})}_{G,\bK,\infty x}$, where $\ka_{j}: G\times D_{x, R}\isom \cE_{j}|_{D_{x,R}}$ is the trivialization corresponding to the full level structure at $x$. Here $D_{x,R}=\Spec(\cO_{x}\wh\ot R)$ and $D_{x,R}^{\times}=\Spec(F_{x}\wh\ot R)$. Let $g\in G(F_{x})$. We define $\xi\cdot g$ to be the tuple $((x_{i})_{i\in I}, (\cE^{g}_{j})_{0\le j\le r}, (\a'_{j})_{1\le j\le r}, \io', (\ka'_{j})_{0\le j\le r})$. Here $\cE^{g}_{j}$ is the $G$-torsor obtained by gluing $\cE_{j}|_{(X-\{x\})_{R}}$ with the trivial $G$-torsor over $D_{x,R}$ using the isomorphism
\begin{equation*}
G\times D^{\times}_{x, R}\xr{g} G\times D^{\times}_{x, R} \xr{\ka_{j}} \cE_{j}|_{D^{\times}_{x,R}}.
\end{equation*}
The first map above is the left multiplication by $g$. The maps $\a'_{j}$ and $\io'$ are induced by $\a_{j}$ and $\io$ because $\G(x_{j})$ is disjoint from $\{x\}_{R}$ by definition. Finally, the trivialization $\ka'_{j}$ of $\cE'_{j}|_{D_{x,R}}$ is the tautological one from the construction of $\cE'_{j}$ by gluing.

\subsection{Central action}\label{sss:central action}

Let $Z$ be the center of $G$. For each $x\in \Sig$ let $\bK_{Z,x}=\bK_{x}\cap L^{+}_{x}Z$ be a level group for $Z$ at $x$. Let $\bK_{Z}$ be the collection of $\bK_{Z,x}$ for $x\in \Sig$. Since $Z$ is abelian, the stack $\Bun_{Z,\bK_{Z}}$ is a group stack. In particular, $\Bun_{Z,\bK_{Z}}(k)$ is a Picard groupoid. 

There is a canonical action of $\Bun_{Z,\bK_{Z}}$ on $\Bun_{G,\bK}$. To see this, we first consider the case $\bK_{x}$ is a principal congruence subgroup $L^{(n_{x})}_{x}G$ for some $n_{x}\in\NN$, for each $x\in \Sig$. In this case, 
$\Bun_{G,\bK}=\Bun_{G,D}$ where $D=\sum n_{x}x$, and $\Bun_{Z,\bK_{Z}}=\Bun_{Z,D}$. Consider an $S$-point $\cE\in\Bun_{G,D}(S)$, corresponding to a right $G$-torsor $\cE$ on $X\times S$ with a trivialization $\io_{D}$ of $\cE|_{D\times S}$, and an  $S$-point $\cQ\in \Bun_{Z,D}(S)$, corresponding to a right $Z$-torsor $\cQ$ on $X\times S$ with a trivialization $\ka_{D}$ of $\cQ|_{D\times S}$. Since $Z$ is central in $G$, its natural action on $\cE$ commutes with the action of $G$, hence $Z$ acts on $\cE$ by $G$-torsor automorphisms. We form the twisted fiber product ${}_{\cQ}\cE:=\cQ\times^{Z}_{X\times S}\cE$, the quotient of $\cQ\times_{X\times S}\cE$ by the diagonal action of $Z$. The right action of $G$ on $\cE$ equips ${}_{\cQ}\cE$ with a $G$-torsor structure. Using the trivializations $\io_{D}$ and $\ka_{D}$ of $\cE|_{D\times S}$ and $\cQ|_{D\times S}$ respectively, we get a trivialization $\io'_{D}$ of ${}_{\cQ}\cE|_{D\times S}$. We define the result of the action $(\cQ,\ka_{D})\cdot (\cE,\io_{D})$ to be $({}_{\cQ}\cE, \io'_{D})$.

For more general level structures, we still get the desired action by choosing a sufficient small principal congruence subgroup $L^{(n_{x})}_{x}G$ contained in $\bK_{x}$ for each $x$, and identifying $\Bun_{G,\bK}$ (resp. $\Bun_{Z,\bK_{Z}}$) as a quotient of $\Bun_{G,D}$ (resp. $\Bun_{Z,D}$) by an algebraic subgroup of $G_{D}$ (resp. $Z_{D}$), as in \S\ref{sss:level}. We omit details.

Now consider the Shtuka version of the above construction.   We claim that there is a canonical action of $\Bun_{Z,\bK_{Z}}(k)$ on $\Sht^{(I_{1},\cdots, I_{r})}_{G,\bK}$. Indeed, for any $S$-point $\xi=((x_{i})_{i\in I}, (\cE_{j})_{0\le j\le r}, (\a_{j})_{1\le j\le r}, \io)$ of $\Sht^{(I_{1},\cdots, I_{r})}_{G,\bK}$, and $\cQ\in \Bun_{Z,\bK_{Z}}(k)$, we apply the above construction to obtain ${}_{\cQ}\cE_{j}\in \Bun_{G,\bK}(S)$. The original $\a_{j}: \cE_{j-1}|_{X\times S-\cup_{i\in I_{j}}\G(x_{i})}\isom  \cE_{j}|_{X\times S-\cup_{i\in I_{j}}\G(x_{i})}$ induces an isomorphism $\a'_{j}: {}_{\cQ}\cE_{j-1}|_{X\times S-\cup_{i\in I_{j}}\G(x_{i})}\isom  {}_{\cQ}\cE_{j}|_{X\times S-\cup_{i\in I_{j}}\G(x_{i})}$, and similarly $\io:\cE_{r}\isom{}^{\t}\cE_{0}$ induces an isomorphism $\io': {}_{\cQ}\cE_{r}\cong {}_{{}^{\t}\cQ}({}^{\t}\cE_{0})={}^{\t}({}_{\cQ}\cE_{0})$, because ${}^{\t}\cQ=\cQ$. The action of $\cQ$ on $\Sht^{(I_{1},\cdots, I_{r})}_{G,\bK}$ then takes $\xi$ to $\xi':=((x_{i})_{i\in I}, ({}_{\cQ}\cE_{j})_{0\le j\le r}, (\a'_{j})_{1\le j\le r}, \io')$.

It is easy to see that the $\Bun_{Z,\bK_{Z}}(k)$-action on $\Sht^{(I_{1},\cdots, I_{r})}_{G,\bK}$ preserves the bounded versions $\Sht^{(I_{1},\cdots, I_{r}),\le\un\l}_{G,\bK}$ for any collection of dominant coweights $\un\l\in (\xcoch(T)^{+})^{I}$.

\begin{exam} Consider the case $G=\GL_{n}$ without level structures. We have $\Bun_{Z}=\Pic_{X}$. The action of a line bundle $\cL\in \Pic_{X}(k)$ on $\Sht^{(I_{1},\cdots, I_{r})}_{\GL_{n}}$ is by tensoring each vector bundle $\cV_{j}$ with $\cL$.

On the other hand, consider the case $G=\SL_{n}$ without level structures. Then $\Bun_{Z}\cong \Pic_{X}[n]$ is the $n$-torsion substack of the Picard stack of $X$. For $\cL\in \Bun_{Z}(k)=\Pic_{X}[n](k)$, i.e., a line bundle $\cL$ equipped with an isomorphism $\e: \cO_{X}\cong \cL^{\ot n}$, the action of $\cL$ on $\Sht^{(I_{1},\cdots, I_{r})}_{\SL_{n}}$ is again given by tensoring each vector bundle $\cV_{j}$ with $\cL$. Note that the original trivialization of $\det(\cV_{j})$ together with $\e$  induces a trivialization of $\det(\cV_{j}\ot \cL)\cong \det(\cV_{j})\ot \cL^{\ot n}$.
\end{exam}

\subsection{Hecke correspondence}

\sss{Spherical Hecke correspondence}
For $x\notin \Sig$ let $K_{x}=G(\cO_{x})$. Let $[g_{x}]\in K_{x}\bs G(F_{x})/K_{x}$. We define
\begin{equation*}
 \Sht^{(I_{1},\cdots, I_{r})}_{G,\bK}[g_{x}]:= \Sht^{(I_{1},\cdots, I_{r})}_{G,\bK,\infty x}/(K_{x}\cap g_{x}K_{x}g_{x}^{-1}).
\end{equation*}
We view $\Sht^{(I_{1},\cdots, I_{r})}_{G,\bK}[g_{x}]$ as a correspondence
\begin{equation*}
\xymatrix{ & \Sht^{(I_{1},\cdots, I_{r})}_{G,\bK}[g_{x}]\ar[dl]_{\oll{c}}\ar[dr]^{\orr{c}}\\
\Sht^{(I_{1},\cdots, I_{r})}_{G,\bK}|_{(U-\{x\})^{I}} &&  \Sht^{(I_{1},\cdots, I_{r})}_{G,\bK}|_{(U-\{x\})^{I}}
}
\end{equation*}
where $\oll{c}$ is the natural projection corresponding to the inclusion  $K_{x}\cap g_{x}K_{x}g_{x}^{-1}$; the map $\orr{c}$ is the composition
\begin{equation*}
 \Sht^{(I_{1},\cdots, I_{r})}_{G,\bK,\infty x}/(K_{x}\cap g_{x}K_{x}g_{x}^{-1})\xr{\cdot g_{x}} \Sht^{(I_{1},\cdots, I_{r})}_{G,\bK,\infty x}/(g_{x}^{-1}K_{x}g_{x}\cap K_{x})\to \Sht^{(I_{1},\cdots, I_{r})}_{G,\bK}.
\end{equation*}
Here, for the first map, we are using the right $G(F_{x})$-action on $\Sht^{(I_{1},\cdots, I_{r})}_{G,\bK,\infty x}$ constructed in \S\ref{sss:full x}; the second map corresponds to the inclusion  $g^{-1}_{x}K_{x}g_{x}\cap K_{x}\subset K_{x}$.

\sss{Hecke correspondence at a ramified point} For  $x\in \Sig$ and $[g_{x}]\in  K_{x}\bs G(F_{x})/K_{x}$, we define a correspondence
\begin{equation}\label{Hk x Sig}
\xymatrix{ & \Sht^{(I_{1},\cdots, I_{r})}_{G,\bK}[g_{x}]\ar[dl]_{\oll{c}}\ar[dr]^{\orr{c}}\\
\Sht^{(I_{1},\cdots, I_{r})}_{G,\bK} &&  \Sht^{(I_{1},\cdots, I_{r})}_{G,\bK}
}
\end{equation}
by setting
\begin{equation*}
\Sht^{(I_{1},\cdots, I_{r})}_{G,\bK}[g_{x}]:=\Sht^{(I_{1},\cdots, I_{r})}_{G,\bK^{x}, \infty x}/(K_{x}\cap g_{x}K_{x}g_{x}^{-1}).
\end{equation*}
Here $\bK^{x}$ is the collection of level groups $(\bK_{x})_{x\in \Sig-\{x\}}$. The map $\oll{c}$ is the natural projection, and the map $\orr{c}$ is the right action of $g_{x}$ followed by the natural projection, as in the case of the spherical Hecke modification.

For a $k$-scheme $S$, $\Sht^{(I_{1},\cdots, I_{r})}_{G,\bK}[g_{x}](S)$ classifies $(x_{i})_{i\in I}$ (where $x_{i}\in U(S)$) together with a diagram
\begin{equation*}
\xymatrix{ \cE_{0}\ar@{-->}[d]^{f_{0}} \ar@{-->}[r]^{\a_{1}} & \cE_{1}\ar@{-->}[d]^{f_{1}} \ar@{-->}[r]^{\a_{2}} & \cdots  \ar@{-->}[r]^{\a_{r}}  & \cE_{r}\ar@{-->}[d]^{f_{r}}\ar[r]^{\io}_{\sim} & {}^{\t}\cE_{0}\ar@{-->}[d]^{{}^{\t}f_{0}}\\
 \cE'_{0}\ar@{-->}[r]^{\a'_{1}}  & \cE'_{1}\ar@{-->}[r]^{\a'_{1}}  & \cdots \ar@{-->}[r]^{\a'_{r}}  & \cE'_{r}\ar[r]^{\io'}_{\sim}  & {}^{\t}\cE'_{0}}
\end{equation*}
Here the top and bottom rows are both $S$-points of $\Sht^{(I_{1},\cdots, I_{r})}_{G,\bK}$ with legs $(x_{i})_{i\in I}$, and the vertical maps are isomorphisms of $G$-torsors on $U\times S$\begin{equation*}
f_{i}: \cE_{i}|_{U\times S}\isom \cE'_{i}|_{U\times S}
\end{equation*}
which extends to an isomorphism of $\bK_{x'}$-level structures for $x'\in \Sigma-\{x\}$, and has  relative position $[g_{x}]\in K_{x}\bs G(F_{x})/K_{x}$ at $x$.

\sss{Bounded version} Given a bound $\un\l\in (\xcoch(T)^{+})^{I}$, we have a bounded version of the Hecke correspondences with bound $\un\l$:
\begin{equation*}
\Sht^{(I_{1},\cdots, I_{r}), \le \un\l}_{G,\bK}[g_{x}]
\end{equation*}
as a self correspondence of $\Sht^{(I_{1},\cdots, I_{r}), \le \un\l}_{G,\bK}|_{(U-\{x\})^{I}}$ (if $x\notin \Sig$) or $\Sht^{(I_{1},\cdots, I_{r}), \le \un\l}_{G,\bK}$ (if $x\in \Sig$).

\subsection{Partial Frobenius}
There is a canonical morphism
\begin{equation}\label{pl Fr}
\Fr^{\pl}: \Sht^{(I_{1},\cdots, I_{r})}_{G,\bK}\to \Sht^{(I_{2},\cdots, I_{r}, I_{1})}_{G,\bK}
\end{equation}
defined on the level of $S$-points as follows: it sends an $S$-point 
\begin{equation*}
\xymatrix{\xi=((x_{i})_{i\in I}, \quad   \cE_{0}\ar@{-->}[r]^-{\a_{1}} & \cdots\ar@{-->}[r]^-{\a_{r-1}} & \cE_{r-1}\ar@{-->}[r]^-{\a_{r}} & \cE_{r}\ar[r]^-{\io}_{\sim} & {}^{\t}\cE_{0})}
\end{equation*}
to 
\begin{equation*}
\xymatrix{\Fr^{\pl}(\xi)=( (x'_{i})_{i\in I}, \quad  \cE_{1}\ar@{-->}[r]^-{\a_{2}} & \cdots\ar@{-->}[r]^-{\a_{r}} & \cE_{r}\ar@{-->}[r]^-{{}^{\t}\a_{1}\c\io} & {}^{\t}\cE_{1}\ar@{=}[r]^{\io'=\id} & {}^{\t}\cE_{1})}.
\end{equation*}
Here
\begin{equation*}
x'_{i}=\begin{cases} x_{i}\c\Fr_{S}, & i\in I_{1}\\
x_{i}, & i\notin I_{1}.\end{cases}
\end{equation*}
It is clear that
\begin{equation*}
\underbrace{\Fr^{\pl}\c \Fr^{\pl}\c\cdots \c \Fr^{\pl}}_{r \textup{ times}}= \Fr\in \End(\Sht^{(I_{1},\cdots, I_{r})}_{G,\bK})
\end{equation*}

We have a commutative diagram
\begin{equation}\label{pl Fr UI}
\xymatrix{\Sht^{(I_{1},\cdots, I_{r})}_{G,\bK}\ar[r]^{\Fr^{\pl}} \ar[d]^{\pi^{(I_{1},\cdots, I_{r})}} & \Sht^{(I_{2},\cdots, I_{1})}_{G,\bK}\ar[d]^{\pi^{(I_{2},\cdots, I_{1})}}\\
U^{I}\ar[r]^{\Fr_{I_{1}}} & U^{I}
}
\end{equation}
Here $\Fr_{I_{1}}:U^{I}\to U^{I}$ is the Frobenius on the $I_{1}$-factors and identity on the other factors. This diagram is topologically Cartesian because both $\Fr_{I_{1}}$ on $U^{I}$ and $\Fr^{\pl}$ are universal homeomorphisms.

\section{Structures on cohomology}\label{s:coho}

In this section, we summarize the basic symmetries on the cohomology of the moduli stacks of $G$-Shtukas (relative to the space of legs): factorization, Hecke algebra action and partial Frobenius action. These symmetries will be used in the further study of the cohomology of the moduli stacks of $G$-Shtukas in the lectures of C.Xue \cite{Xue}.

\subsection{Cohomology of the moduli of Shtukas}

\sss{Satake sheaves}
Recall the geometric Satake equivalence of Lusztig, Beilinson--Drinfeld, Ginzburg, Mirkovic--Vilonen (see \cite{MV}). We have an equivalence of tensor categories over $\Qlbar$:
\begin{equation*}
\Sat: \Rep(\dG,\Qlbar)\isom \Perv_{\Aut(D)}(\QGr_{G})
\end{equation*}
where the monoidal structure on the right side is given by convolution, and the commutativity constraint given by the fusion construction of Beilinson and Drinfeld. Here an $\Aut(D)$-equivariant perverse sheaf on $\QGr_{G}$ means a $L^{+}G\rtimes\Aut(D)$-equivariant perverse sheaf on $\Gr_{G}$. For any finite set $I$, we define a functor
\begin{equation*}
\Sat_{I}: \Rep(\dG^{I},\Qlbar)\to\Perv((\QGr_{G}/\Aut(D))^{I}).
\end{equation*}
For $V=\boxtimes_{i\in I}V_{i}$ where $V_{i}\in \Rep(\dG)$, define $\Sat_{I}=\boxtimes_{i\in I}\Sat(V_{i})$ and extend by additivity.

We shall define a complex of sheaves on $\Hk^{I}_{G,\bK}$. Let $r=|I|$, and choose a bijection $\s: I\isom \{1,2,\cdots, r\}$. Let $I_{j}=\{\s^{-1}(j)\}\subset I$, then we have a decomposition $I=I_{1}\coprod\cdots\coprod I_{r}$ into singletons. Recall the relative position map from \eqref{ev Hk singleton}
\begin{equation*}
\ev^{(I_{1},\cdots, I_{r})}_{\Hk}: \Hk^{(I_{1},\cdots, I_{r})}_{G,\bK}\to (\QGr_{G}/\Aut(D))^{I}.
\end{equation*}
We define the pullback
\begin{equation*}
\IC^{(I_{1},\cdots, I_{r})}_{\Hk}(V):=(\ev^{(I_{1},\cdots, I_{r})}_{\Hk})^{*}\Sat_{I}(V)\in D_{c}(\Hk^{(I_{1},\cdots, I_{r})}_{G,\bK}, \Qlbar).
\end{equation*}
If $V=\boxtimes_{i\in I}V_{i}$, and $V_{i}$ is irreducible with highest weight $\l_{i}\in\xcoch(T)^{+}$, then $\IC^{(I_{1},\cdots, I_{r})}_{\Hk}(V)$ is supported on the closed substack $\Hk^{(I_{1},\cdots, I_{r}), \le\un\l}_{G,\bK}$ with $\un\l=(\l_{i})_{i\in I}$.

We have the projection
\begin{equation*}
\pi^{(I_{1},\cdots, I_{r})}_{I}: \Hk^{(I_{1},\cdots, I_{r})}_{G,\bK}\to \Hk^{I}_{G,\bK}
\end{equation*}
Then define
\begin{equation*}
\IC^{I,\s}_{\Hk}(V):=\pi^{(I_{1},\cdots, I_{r})}_{I, !}\IC^{(I_{1},\cdots, I_{r})}_{\Hk}(V)\in D_{c}(\Hk^{I}_{G,\bK}, \Qlbar).
\end{equation*}
Again, if $V=\boxtimes_{i\in I}V_{i}$, and $V_{i}$ is irreducible with highest weight $\l_{i}\in\xcoch(T)^{+}$, then $\IC^{I}_{\Hk}(V)$ is supported on  $\Hk^{I, \le\un\l}_{G,\bK}$ with $\un\l=(\l_{i})_{i\in I}$.

\begin{lemma}\label{l:IC Hk}
For any $V\in \Rep(\dG^{I})$, the complex $\IC^{I,\s}_{\Hk}(V)$ is independent of the choice of the ordering $\s$ on $I$ up to a canonical isomorphism. We denote it by $\IC^{I}_{\Hk}(V)$.

Moreover, if $V=\boxtimes_{i\in I}V_{i}$ and  $V_{i}$ is irreducible with highest weight $\l_{i}$, then $\IC^{I,\s}_{\Hk}(V)$ is, up to a shift, isomorphic to the intersection complex of $\Hk^{I, \le\un\l}_{G,\bK}$, where $\un\l=(\l_{i})_{i\in I}$.
\end{lemma}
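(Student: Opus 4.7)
The plan is to deduce both parts from the local model theorem (Proposition~\ref{p:local model}), combined with the fusion-product structure of geometric Satake on the Beilinson-Drinfeld Grassmannian. I would prove part~(2) first, with $\s$ fixed; part~(1) then follows immediately because the IC sheaf on the right-hand side of (2) does not involve $\s$.

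First, for $V = \boxtimes_{i \in I} V_{i}$ with each $V_{i}$ irreducible of highest weight $\l_{i}$, I would identify $\IC^{(I_{1},\cdots, I_{r})}_{\Hk}(V)$. By geometric Satake, each $\Sat(V_{i})$ is, up to shift, the IC sheaf of $\QGr_{G, \le \l_{i}}/\Aut(D)$, so $\Sat_{I}(V) = \boxtimes_{i} \Sat(V_{i})$ is the IC sheaf of $\prod_{i} \QGr_{G,\le\l_{i}}/\Aut(D)$. By Proposition~\ref{p:fm sm} the relative position map $\ev^{(I_{1},\cdots, I_{r})}_{\Hk}$ is formally smooth, and by Proposition~\ref{p:local model} it is in fact \'etale-locally a smooth morphism of the expected relative dimension; hence the pullback $\IC^{(I_{1},\cdots, I_{r})}_{\Hk}(V)$ is, up to a canonical shift, the IC sheaf of $\Hk^{(I_{1},\cdots, I_{r}), \le \un\l}_{G,\bK}$.

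Next, for the pushforward under $\pi^{(I_{1},\cdots, I_{r})}_{I}$, I would apply Proposition~\ref{p:local model}(3), which says this map is \'etale-locally on the target isomorphic to the convolution/fusion map
\[
\Gr^{(I_{1},\cdots, I_{r}), \le \un\l}_{G} \to \Gr^{I, \le \un\l}_{G}
\]
on the Beilinson-Drinfeld Grassmannian. The essential input --- the fusion computation of Beilinson-Drinfeld underlying geometric Satake --- is that this map is proper, is an isomorphism over $U^{I}_{\disj}$, and is stratified semismall, so that $R\pi_{!}$ of the IC sheaf of the source is the IC sheaf of the target (both bounded by $\un\l$). Transported through the \'etale cover of the local model, this identifies $\IC^{I,\s}_{\Hk}(V) = \pi^{(I_{1},\cdots, I_{r})}_{I,!}\IC^{(I_{1},\cdots, I_{r})}_{\Hk}(V)$ with the IC sheaf of $\Hk^{I, \le \un\l}_{G,\bK}$ up to shift, proving (2) in this case. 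Since the IC sheaf of $\Hk^{I, \le \un\l}_{G,\bK}$ does not depend on $\s$, part~(1) follows for such $V$. For general $V \in \Rep(\dG^{I})$, decompose $V$ as a direct sum of $\dG^{I}$-irreducibles, each of which is automatically of external-tensor-product form; since both $\Sat_{I}(\cdot)$ and the construction $V \mapsto \IC^{I,\s}_{\Hk}(V)$ are additive, part~(1) extends by linearity.

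The main obstacle will be the semismallness/IC-preservation property of the convolution map on the BD Grassmannian: it is straightforward over $U^{I}_{\disj}$ (where the map is an isomorphism and the sheaf factorizes as an external product), but extending the identification across partial diagonals requires the Beilinson-Drinfeld fusion construction via nearby cycles along $X^{I}$. One must also track shifts consistently --- the formally smooth $\ev$-pullback contributes a shift by its relative dimension, while the semismall pushforward contributes none --- and verify that the identifications are canonical (not merely that isomorphisms exist) across the \'etale charts provided by the local model, which involves $\Aut(D)$-equivariance bookkeeping for the Satake sheaves.
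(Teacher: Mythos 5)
Your proposal follows essentially the same route as the paper's sketch: pull back the Satake sheaf along the formally smooth relative position map (Proposition \ref{p:fm sm}) to identify $\IC^{(I_{1},\cdots, I_{r})}_{\Hk}(V)$ with the intersection complex of $\Hk^{(I_{1},\cdots, I_{r}),\le\un\l}_{G,\bK}$ up to shift, then push forward along $\pi^{(I_{1},\cdots, I_{r})}_{I}$ and invoke the Beilinson--Drinfeld fusion property of the convolution map; independence of $\s$ follows since the result is an intersection complex that does not see the ordering. One correction: the property that makes $\pi^{(I_{1},\cdots, I_{r})}_{I,!}$ preserve intersection complexes is that this map is stratified \emph{small} in the sense of \cite{MV} (strict fiber-dimension inequality over every non-open stratum), not ``semismall together with being an isomorphism over $U^{I}_{\disj}$'' as you write --- semismallness alone would only yield a perverse direct sum of IC sheaves of various strata, exactly as happens for the convolution morphism over a single point of the curve. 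Also note that Proposition \ref{p:local model}(3) as stated concerns the moduli of Shtukas rather than the Hecke stack, though the analogous local-model statement for $\Hk$ is what is really being used and follows from the same formal smoothness argument.
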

\begin{proof}[Sketch of proof] We assume $V=\boxtimes_{i\in I}V_{i}$ and $V_{i}$ is irreducible with highest weight $\l_{i}$. For a decomposition into singletons $I=I_{1}\coprod\cdots\coprod I_{r}$, the relative position map $\ev^{(I_{1},\cdots, I_{r})}_{\Hk}$ is formally smooth by Proposition \ref{p:fm sm}. Therefore $\IC^{(I_{1},\cdots, I_{r})}_{\Hk}(V)$ is, up to a shift, canonically isomorphic to the intersection complex of $\Hk^{(I_{1},\cdots, I_{r}), \le\un\l}_{G,\bK}$. The map $\pi^{(I_{1},\cdots, I_{r})}_{I}$ is stratified small in the sense of \cite[after Prop. 4.2]{MV}, which implies that $\IC^{I,\s}_{\Hk}(V)$ is, up to a shift, canonically isomorphic to the intersection complex of $\Hk^{I, \le\un\l}_{G,\bK}$. Hence it is independent of the choice of $\s$.
\end{proof}

More generally, if $I=I_{1}\coprod\cdots \coprod I_{r}$ is an arbitrary decomposition of $I$, then we can choose an ordering of each $I_{j}$ and refine the decomposition into a decomposition into singletons $I=I'_{1}\coprod\cdots \coprod I'_{s}$ (so there is a bijection $\s: I\isom \{1,2,\cdots, s\}$ such that $I'_{j}=\{\s^{-1}(j)\}$ for $1\le j\le s$, and $\s(I_{j})$ is a consecutive subset of $\{1,2,\cdots, s\}$). We have a canonical projection
\begin{equation*}
\pi^{(I'_{1},\cdots, I'_{s})}_{(I_{1},\cdots, I_{r})}: \Hk^{(I'_{1},\cdots, I'_{s})}_{G,\bK}\to \Hk^{(I_{1},\cdots, I_{r})}_{G,\bK}.
\end{equation*}
For $V\in \Rep(\dG^{I}, \Qlbar)$ define
\begin{equation*}
\IC^{(I_{1},\cdots, I_{r}),\s}_{\Hk}(V):=\pi^{(I'_{1},\cdots, I'_{s})}_{(I_{1},\cdots, I_{r}), !}\IC^{(I'_{1},\cdots, I'_{s})}_{\Hk}(V)\in D_{c}(\Hk^{(I_{1},\cdots, I_{r})}_{G,\bK}, \Qlbar).
\end{equation*}

The same argument of Lemma \ref{l:IC Hk} proves the following.

\begin{lemma}\label{l:IC Hk refine} Let $V\in \Rep(\dG^{I})$.
\begin{enumerate}
\item The complex $\IC^{(I_{1},\cdots, I_{r}),\s}_{\Hk}(V)$ on $\Hk^{(I_{1},\cdots, I_{r})}_{G,\bK}$ is independent of the choice of the ordering $\s$ on $I$ refining the decomposition $I=I_{1}\coprod\cdots \coprod I_{r}$, up to a canonical isomorphism. We denote this complex by $\IC^{(I_{1},\cdots, I_{r})}_{\Hk}(V)$.

\item Let $I=I'_{1}\coprod\cdots \coprod I'_{r'}$ be any refinement of the decomposition  $I=I_{1}\coprod\cdots \coprod I_{r}$. Then there is a canonical isomorphism
\begin{equation*}
\IC^{(I_{1},\cdots, I_{r})}_{\Hk}(V)\cong \pi^{(I'_{1},\cdots, I'_{r'})}_{(I_{1},\cdots, I_{r}), !}\IC^{(I'_{1},\cdots, I'_{r'})}_{\Hk}(V).
\end{equation*}
Moreover, these isomorphisms are compatible with further refinements.

\item If $V=\boxtimes_{i}V_{i}$ and each $V_{i}$ is irreducible with highest weight $\l_{i}$, then $\IC^{(I_{1},\cdots, I_{r})}_{\Hk}(V)$ is, up to a shift, isomorphic to the intersection complex of $\Hk^{(I_{1},\cdots, I_{r}),\le\un\l}_{G,\bK}$, where $\un\l=(\l_{i})_{i\in I}$.
\end{enumerate}
\end{lemma}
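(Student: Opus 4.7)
The plan is to parallel the proof sketch of Lemma \ref{l:IC Hk}, treating partial refinements rather than only maximal ones. Since $\Sat_{I}$ is extended from box products by additivity, and every irreducible representation of $\dG^{I}$ is an external tensor product of irreducibles of $\dG$, all three assertions reduce to the case $V=\boxtimes_{i\in I} V_{i}$ with each $V_{i}$ irreducible of highest weight $\l_{i}$, with $\un\l=(\l_{i})_{i\in I}$. I will establish (3) first, deduce (1) from it, and then obtain (2) from transitivity of pushforward.

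For (3), pick any ordering $\s$ on $I$ refining $(I_{1},\cdots, I_{r})$ into singletons $(I'_{1},\cdots, I'_{s})$. By Proposition \ref{p:fm sm}, the relative position map $\ev^{(I'_{1},\cdots, I'_{s})}_{\Hk}$ is formally smooth, so $\IC^{(I'_{1},\cdots, I'_{s})}_{\Hk}(V)=(\ev^{(I'_{1},\cdots, I'_{s})}_{\Hk})^{*}\Sat_{I}(V)$ coincides, up to a cohomological shift, with the intersection complex of $\Hk^{(I'_{1},\cdots, I'_{s}),\le\un\l}_{G,\bK}$. By Proposition \ref{p:local model}(3), the refinement map $\pi^{(I'_{1},\cdots, I'_{s})}_{(I_{1},\cdots, I_{r})}$ is \'etale-locally on the target isomorphic to the product $\prod_{j=1}^{r}\bigl(\Gr^{(I_{j,1},\cdots, I_{j,s_{j}}), \le\un\l_{j}}_{G}\to \Gr^{I_{j},\le\un\l_{j}}_{G}\bigr)$. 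Each factor is the convolution map, which is stratified small by the Mirkovic--Vilonen argument of \cite[after Prop.~4.2]{MV}, and a product of stratified small maps is stratified small. Consequently the $!$-pushforward of the intersection complex is, up to shift, the intersection complex of the target. Since IC sheaves on an Artin stack are determined \'etale-locally, this yields the claim of (3).

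Part (1) follows immediately from (3) together with Lemma \ref{l:IC Hk}: for any ordering $\s$ refining $(I_{1},\cdots, I_{r})$, the complex $\IC^{(I_{1},\cdots, I_{r}),\s}_{\Hk}(V)$ is (up to a canonical shift) the intersection complex of $\Hk^{(I_{1},\cdots, I_{r}),\le\un\l}_{G,\bK}$, which is intrinsic to the bounded Hecke stack and therefore independent of $\s$. For part (2), given a refinement $(I'_{1},\cdots, I'_{r'})$ of $(I_{1},\cdots, I_{r})$, choose an ordering $\s'$ of $I$ that refines $(I'_{1},\cdots, I'_{r'})$ into singletons; automatically $\s'$ also refines $(I_{1},\cdots, I_{r})$. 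Since $\pi^{\s'}_{(I_{1},\cdots, I_{r})}=\pi^{(I'_{1},\cdots, I'_{r'})}_{(I_{1},\cdots, I_{r})}\c \pi^{\s'}_{(I'_{1},\cdots, I'_{r'})}$, the projection formula (transitivity of lower-shriek pushforward) gives
\begin{equation*}
\IC^{(I_{1},\cdots, I_{r})}_{\Hk}(V)=\pi^{\s'}_{(I_{1},\cdots, I_{r}),!}\IC^{\s'}_{\Hk}(V)=\pi^{(I'_{1},\cdots, I'_{r'})}_{(I_{1},\cdots, I_{r}),!}\IC^{(I'_{1},\cdots, I'_{r'})}_{\Hk}(V).
\end{equation*}
Compatibility with further refinements is immediate from the same argument applied to chains of refinements.

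The main technical point is the stratified smallness of the local model map, which reduces to the well-known stratified smallness of the convolution map on affine Grassmannians; beyond that the argument is a purely formal combination of proper base change and the properties of IC sheaves.
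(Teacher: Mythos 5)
Your argument is correct and is essentially the paper's own proof, which simply reruns the argument of Lemma \ref{l:IC Hk}: formal smoothness of the singleton relative-position map (Proposition \ref{p:fm sm}) identifies the fully iterated complex with an intersection complex up to shift, and stratified smallness of the refinement map identifies its $!$-pushforward with the intersection complex of the target, from which (1), (2) and (3) follow as you say. The only cosmetic point is that Proposition \ref{p:local model}(3) as stated concerns Shtukas; for the Hecke stacks here you should invoke the (identical and easier) local-model statement for $\Hk$, or directly the stratified smallness of the convolution map of affine Grassmannians as in the proof of Lemma \ref{l:IC Hk}.
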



\sss{Cohomology complexes of $\Sht^{I}_{G}$}
Recall the canonical map
\begin{equation*}
\r^{(I_{1},\cdots, I_{r})}: \Sht^{(I_{1},\cdots, I_{r})}_{G,\bK}\to \Hk^{(I_{1},\cdots, I_{r})}_{G,\bK}.
\end{equation*}
For $V\in \Rep(\dG^{I}, \Qlbar)$ define
\begin{equation*}
\IC^{(I_{1},\cdots, I_{r})}_{\Sht}(V):=(\r^{(I_{1},\cdots, I_{r})})^{*}\IC^{(I_{1},\cdots, I_{r})}_{\Hk}(V)\in D_{c}(\Sht^{(I_{1},\cdots, I_{r})}_{G,\bK}).
\end{equation*}

Alternatively, when $I=I_{1}\coprod\cdots \coprod I_{r}$ is a decomposition into singletons, we can consider the relative position map \eqref{ev Sht singleton} for $\Sht^{(I_{1},\cdots, I_{r})}_{G,\bK}$.
Then
\begin{equation*}
\IC^{(I_{1},\cdots, I_{r})}_{\Sht}(V)\cong (\ev_{\Sht}^{(I_{1},\cdots, I_{r})})^{*}\Sat_{I}(V).
\end{equation*}

\begin{lemma}\label{l:IC Sht refine} Let $V\in \Rep(\dG^{I})$.
\begin{enumerate}
\item For any refinement $I=I'_{1}\coprod\cdots \coprod I'_{r'}$  of the decomposition  $I=I_{1}\coprod\cdots \coprod I_{r}$, there is a canonical isomorphism
\begin{equation*}
\IC^{(I_{1},\cdots, I_{r})}_{\Sht}(V)\cong \pi^{(I'_{1},\cdots, I'_{r'})}_{(I_{1},\cdots, I_{r}), !}\IC^{(I'_{1},\cdots, I'_{r'})}_{\Sht}(V)
\end{equation*}
compatible with further refinements.
\item If $V=\boxtimes_{i\in I}V_{i}$ where  $V_{i}$ is irreducible with highest weight $\l_{i}$, then $\IC^{(I_{1},\cdots, I_{r})}_{\Sht}(V)$ is, up to a shift by $\#I$, isomorphic to the intersection complex of $\Sht^{(I_{1},\cdots, I_{r}), \le\un\l}_{G,\bK}$, where $\un\l=(\l_{i})_{i\in I}$.
\end{enumerate}
\end{lemma}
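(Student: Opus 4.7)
The plan is to reduce the lemma to its Hecke-stack analogue, Lemma~\ref{l:IC Hk refine}, by exploiting that $\Sht^{(I_{1},\cdots,I_{r})}_{G,\bK}$ is obtained from $\Hk^{(I_{1},\cdots,I_{r})}_{G,\bK}$ by base change along the graph of Frobenius. For a refinement $I=I'_{1}\coprod\cdots\coprod I'_{r'}$ of $I=I_{1}\coprod\cdots\coprod I_{r}$, the natural diagram
\[
\xymatrix{
\Sht^{(I'_{1},\cdots,I'_{r'})}_{G,\bK} \ar[r]^-{\r'} \ar[d]_{\pi^{\Sht}} & \Hk^{(I'_{1},\cdots,I'_{r'})}_{G,\bK} \ar[d]^{\pi^{\Hk}} \\
\Sht^{(I_{1},\cdots,I_{r})}_{G,\bK} \ar[r]^-{\r} & \Hk^{(I_{1},\cdots,I_{r})}_{G,\bK}
}
\]
is Cartesian, because in the fiber product defining $\Sht^{(I'_{1},\cdots,I'_{r'})}_{G,\bK}$ the map $(p_{0},p_{r'})$ to $\Bun_{G,\bK}^{2}$ factors as $(p_{0},p_{r})\circ\pi^{\Hk}$ through the unrefined Hecke stack. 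Here $\pi^{\Hk}$ and $\pi^{\Sht}$ denote the refinement maps on the Hecke and Shtuka sides, and $\r,\r'$ are the canonical maps from Shtuka to Hecke.

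For part (1), applying base change for $!$-pushforward along the above Cartesian square (the sheaves in question being supported on bounded substacks cut out by the highest weights appearing in $V$) yields
\[
\pi^{\Sht}_{!}\IC^{(I'_{1},\cdots,I'_{r'})}_{\Sht}(V) = \pi^{\Sht}_{!}(\r')^{*}\IC^{(I'_{1},\cdots,I'_{r'})}_{\Hk}(V) = \r^{*}\pi^{\Hk}_{!}\IC^{(I'_{1},\cdots,I'_{r'})}_{\Hk}(V) = \r^{*}\IC^{(I_{1},\cdots,I_{r})}_{\Hk}(V) = \IC^{(I_{1},\cdots,I_{r})}_{\Sht}(V),
\]
where the first and last equalities are the definition of $\IC_{\Sht}$, and the third is Lemma~\ref{l:IC Hk refine}(2). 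Compatibility with further refinements follows from the analogous compatibility asserted in Lemma~\ref{l:IC Hk refine}(2) together with the functoriality of the base change isomorphisms.

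For part (2), by part (1) we may refine until $I=I_{1}\coprod\cdots\coprod I_{r}$ is the decomposition into singletons, where the alternative description $\IC^{(I_{1},\cdots,I_{r})}_{\Sht}(V)\cong (\ev^{(I_{1},\cdots,I_{r})}_{\Sht})^{*}\Sat_{I}(V)$ is available. For $V=\boxtimes_{i}V_{i}$ with $V_{i}$ irreducible of highest weight $\l_{i}$, the Satake sheaf $\Sat_{I}(V)$ is the perverse-normalized intersection complex of $\prod_{i}\QGr_{G,\le\l_{i}}/\Aut(D)^{I}$, agreeing on the open stratum with the constant sheaf shifted by $\sum_{i}\langle 2\r,\l_{i}\rangle$. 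By Prop.~\ref{p:fm sm}, $\ev^{(I_{1},\cdots,I_{r})}_{\Sht}$ is formally smooth, and by Prop.~\ref{p:local model}(1)--(2), $\Sht^{(I_{1},\cdots,I_{r}),\le\un\l}_{G,\bK}$ is \'etale locally modeled on $\Gr^{(I_{1},\cdots,I_{r}),\le\un\l}_{G}$ and has dimension $\sum_{i}(\langle 2\r,\l_{i}\rangle+1)$. Consequently $(\ev^{(I_{1},\cdots,I_{r})}_{\Sht})^{*}\Sat_{I}(V)$ restricts on the open stratum of the bounded Shtuka stack to the constant sheaf shifted by $\sum_{i}\langle 2\r,\l_{i}\rangle$, while its perverse intersection complex is the constant sheaf on the open stratum shifted by the total dimension $\sum_{i}(\langle 2\r,\l_{i}\rangle+1)$; the discrepancy is exactly $\#I$, as claimed.

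The main technical subtlety is the shift bookkeeping in part (2): it depends on matching the perverse normalization of $\Sat_{I}(V)$ against the dimension formula of Prop.~\ref{p:local model}(2), which ultimately rests on the \'etale local model $\Gr^{(I_{1},\cdots,I_{r}),\le\un\l}_{G}$ capturing the geometry of $\Sht^{(I_{1},\cdots,I_{r}),\le\un\l}_{G,\bK}$ correctly. The base change step in part (1) is essentially formal once the Cartesian structure of the square above is recognized, and the compatibility with iterated refinements is then a routine two-step verification.
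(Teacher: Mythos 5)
Part (1) of your proposal is exactly the paper's argument: the same Cartesian square relating the refinement maps on the Shtuka and Hecke sides, proper base change, and Lemma \ref{l:IC Hk refine}. Your singleton-case analysis in part (2), including the shift bookkeeping via Proposition \ref{p:fm sm} and the dimension formula of Proposition \ref{p:local model}(2), also matches the paper.

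There is, however, a genuine gap in part (2) for a general decomposition $I=I_{1}\coprod\cdots\coprod I_{r}$. You write that ``by part (1) we may refine until the decomposition is into singletons,'' but part (1) only gives
\begin{equation*}
\IC^{(I_{1},\cdots, I_{r})}_{\Sht}(V)\cong \pi_{!}\,\IC^{(I'_{1},\cdots, I'_{r'})}_{\Sht}(V)\cong \pi_{!}\Bigl(\IC\bigl(\Sht^{(I'_{1},\cdots, I'_{r'}),\le\un\l}_{G,\bK}\bigr)[-\#I]\Bigr),
\end{equation*}
where $\pi$ is the refinement map and the second isomorphism is your singleton case. To conclude you still must identify $\pi_{!}$ of the intersection complex of the refined bounded Shtuka stack with the intersection complex of the coarse one, and this is not automatic: $\pi_{!}$ of an IC sheaf is in general only a direct sum of shifted IC sheaves. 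The paper closes this by invoking Proposition \ref{p:local model}(3): $\pi$ is \'etale locally on the target isomorphic to the corresponding map of Beilinson--Drinfeld Grassmannians $\Gr^{(I'_{1},\cdots, I'_{r'}),\le\un\l}_{G}\to \Gr^{(I_{1},\cdots, I_{r}),\le\un\l}_{G}$, which is stratified small, and smallness is precisely what forces $\pi_{!}\IC\cong\IC$ with no extra summands or shifts. Your proof needs this step (or an equivalent one); without it, part (2) is established only when the given decomposition is already into singletons.
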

\begin{proof}
(1) follows from the Lemma \ref{l:IC Hk refine} and proper base change for the Cartesian square
\begin{equation*}
\xymatrix{ \Sht^{(I'_{1},\cdots, I'_{r'})}_{G,\bK}\ar[rr]^-{\r^{(I'_{1},\cdots, I'_{r'})}} \ar[d]^{\pi^{(I'_{1},\cdots, I'_{r'})}_{(I_{1},\cdots, I_{r})}}&& \Hk^{(I'_{1},\cdots, I'_{r'})}_{G,\bK}\ar[d]^{\pi^{(I'_{1},\cdots, I'_{r'})}_{(I_{1},\cdots, I_{r})}}\\
\Sht^{(I_{1},\cdots, I_{r})}_{G,\bK}\ar[rr]^-{\r^{(I_{1},\cdots, I_{r})}} && \Hk^{(I_{1},\cdots, I_{r})}_{G,\bK}
}
\end{equation*}

(2) First assume $I_{j}$ are singletons, in which case the statement follows from Proposition \ref{p:fm sm}.

Now consider a general  decomposition $I=I_{1}\coprod\cdots\coprod I_{r}$. Decompose each $I_{j}$ into singletons, and combine them to obtain a decomposition $I=I'_{1}\coprod\cdots\coprod I'_{r'}$ where each $I'_{i}$ is a singleton, and $I_{j}$ is the union of consecutive parts. By Proposition \ref{p:local model}, the map $\pi: \Sht^{(I'_{1},\cdots, I'_{r'}),\le\un\l}_{G,\bK}\to\Sht^{(I_{1},\cdots, I_{r}),\le\un\l}_{G,\bK}$ is \'etale locally isomorphic to the map $\Gr^{(I'_{1},\cdots, I'_{r'}),\un\le\l}_{G}\to \Gr^{(I_{1},\cdots, I_{r}),\un\le\l}_{G}$, which is stratified small. Therefore by (1),
\begin{equation*}
\IC^{(I_{1},\cdots, I_{r})}_{\Sht}(V)\cong \pi_{!}\IC^{(I'_{1},\cdots, I'_{r'})}_{\Sht}(V)\cong\pi_{!}\IC(\Sht^{(I'_{1},\cdots, I'_{r'}),\le\un\l}_{G,\bK})[-\#I]\cong\IC(\Sht^{(I_{1},\cdots, I_{r}),\le\un\l}_{G,\bK})[-\#I].
\end{equation*}
\end{proof}

\begin{defn} Let $I$ be a finite set  with a decomposition $I=I_{1}\coprod\cdots \coprod I_{r}$. Recall the map $\pi^{I}: \Sht^{(I_{1},\cdots, I_{r})}_{G,\bK}\to U^{I}$.  Define the ind-constructible complex
\begin{equation*}
\cH^{(I_{1},\cdots, I_{r})}_{G,\bK}(V):=\pi^{I}_{!}\IC^{(I_{1},\cdots, I_{r})}_{\Sht}(V)\in D(U^{I}, \Qlbar).
\end{equation*}
When $(G,\bK)$ is fixed in the discussion, we simply write $\cH^{(I_{1},\cdots, I_{r})}_{G,\bK}(V)$ as $\cH^{(I_{1},\cdots, I_{r})}(V)$. 

In particular, we have the ind-constructible complex $\cH^{I}(V)\in D(U^{I}, \Qlbar)$. 
\end{defn}


\begin{remark}
To see that $\cH^{I}(V)$ is ind-constructible, we use the Harder-Narasimhan truncation discussed in \S\ref{sss:HN}. The general level structure can be reduced to the case of principal congruence level structures so we assume $\bK_{x}\subset L^{+}_{x}G$ for all $x\in \Sig$.   For $\mu\in \HN$, let ${}^{\le \mu}\pi^{I}: {}^{\le\mu}\Sht^{I}_{G,\bK}\to U^{I}$ be the restriction of $\pi^{I}$. Let ${}^{\le \mu}\IC_{\Sht}^{I}(V)$ be the restriction of $\IC^{I}_{\Sht}(V)$ to ${}^{\le\mu}\Sht^{I}_{G,\bK}$. Then ${}^{\le\mu}\cH^{I}(V):={}^{\le \mu}\pi^{I}_{!}({}^{\le \mu}\IC_{\Sht}^{I}(V))$ is constructible because the support of ${}^{\le \mu}\IC_{\Sht}^{I}(V)$ is of finite type over $U^{I}$. We then have
\begin{equation}\label{HV colim}
\cH^{I}(V)=\colim_{\mu\in \HN}{}^{\le\mu}\cH^{I}(V)
\end{equation}
as an ind-constructible complex on $U^{I}$. Here for $\mu\le \mu'$ the transition map ${}^{\le\mu}\cH^{I}(V)\to {}^{\le\mu'}\cH^{I}(V)$ is induced by the open embedding ${}^{\le\mu}\Sht^{I}_{G,\bK}\incl {}^{\le\mu'}\Sht^{I}_{G,\bK}$.  
\end{remark}

\begin{cor}[of Lemma \ref{l:IC Sht refine}]\label{c:same HV}
For any refinement $I=I'_{1}\coprod\cdots \coprod I'_{r'}$  of the decomposition  $I=I_{1}\coprod\cdots \coprod I_{r}$, there is a canonical isomorphism
\begin{equation*}
\cH^{(I_{1},\cdots, I_{r})}(V)\cong \cH^{(I'_{1},\cdots, I'_{r'})}(V).
\end{equation*}
compatible with further refinements. In particular, for any decomposition  $I=I_{1}\coprod\cdots \coprod I_{r}$, there is a canonical isomorphism
\begin{equation*}
\cH^{(I_{1},\cdots ,I_{r})}(V)\cong\cH^{I}(V)\in D(U^{I}, \Qlbar).
\end{equation*}
\end{cor}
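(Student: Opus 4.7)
The plan is to deduce this statement as a direct formal consequence of Lemma \ref{l:IC Sht refine}(1), since all the geometric content has already been encoded there.

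The key observation is that the leg maps $\pi^{(I_{1},\cdots, I_{r})}: \Sht^{(I_{1},\cdots, I_{r})}_{G,\bK}\to U^{I}$ and $\pi^{(I'_{1},\cdots, I'_{r'})}: \Sht^{(I'_{1},\cdots, I'_{r'})}_{G,\bK}\to U^{I}$ only record the collection of legs $(x_i)_{i\in I}$, so they are compatible with the refinement map in the sense that
\begin{equation*}
\pi^{(I'_{1},\cdots, I'_{r'})} = \pi^{(I_{1},\cdots, I_{r})} \circ \pi^{(I'_{1},\cdots, I'_{r'})}_{(I_{1},\cdots, I_{r})}.
\end{equation*}

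First I would unfold the definition and use functoriality of lower-shriek along a composition:
\begin{equation*}
\cH^{(I'_{1},\cdots, I'_{r'})}(V) = \pi^{(I'_{1},\cdots, I'_{r'})}_{!}\IC^{(I'_{1},\cdots, I'_{r'})}_{\Sht}(V) \cong \pi^{(I_{1},\cdots, I_{r})}_{!}\bigl(\pi^{(I'_{1},\cdots, I'_{r'})}_{(I_{1},\cdots, I_{r}),!}\IC^{(I'_{1},\cdots, I'_{r'})}_{\Sht}(V)\bigr).
\end{equation*}
Then I would invoke Lemma \ref{l:IC Sht refine}(1), which provides a canonical isomorphism $\pi^{(I'_{1},\cdots, I'_{r'})}_{(I_{1},\cdots, I_{r}),!}\IC^{(I'_{1},\cdots, I'_{r'})}_{\Sht}(V) \cong \IC^{(I_{1},\cdots, I_{r})}_{\Sht}(V)$. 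Applying $\pi^{(I_{1},\cdots, I_{r})}_{!}$ to both sides yields the desired isomorphism $\cH^{(I'_{1},\cdots, I'_{r'})}(V) \cong \cH^{(I_{1},\cdots, I_{r})}(V)$.

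Compatibility with further refinements comes for free: given a refinement tower $(I''_{\bu}) \succ (I'_{\bu}) \succ (I_{\bu})$, the associated triangle of lower-shriek functors commutes, and the compatibility clause of Lemma \ref{l:IC Sht refine}(1) guarantees that the two induced identifications $\cH^{(I''_{\bu})}(V) \cong \cH^{(I_{\bu})}(V)$ (the direct one, and the one factoring through $\cH^{(I'_{\bu})}(V)$) agree. For the final statement, simply observe that every decomposition $I = I_{1}\coprod\cdots\coprod I_{r}$ is a refinement of the trivial (single-block) decomposition $(I)$, for which $\Sht^{(I)}_{G,\bK} = \Sht^{I}_{G,\bK}$ tautologically, whence $\cH^{(I_{1},\cdots, I_{r})}(V) \cong \cH^{(I)}(V) = \cH^{I}(V)$. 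I expect no serious obstacle here, as the entire content is packaged in Lemma \ref{l:IC Sht refine}; the only mild care needed is to verify that one may take the colimit \eqref{HV colim} over Harder-Narasimhan truncations compatibly, which is automatic since the refinement map $\pi^{(I'_{\bu})}_{(I_{\bu})}$ respects the stratifications pulled back from $\Bun_{G,\bK}$.
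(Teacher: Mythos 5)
Your argument is correct and is exactly the paper's proof: one applies $\pi^{(I_{1},\cdots, I_{r})}_{!}$ to the isomorphism of Lemma \ref{l:IC Sht refine}(1), using that the leg maps factor through the refinement map. The compatibility and the identification with $\cH^{I}(V)$ via the one-block decomposition are handled just as you describe.
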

\begin{proof} 
Apply $\pi^{(I_{1},\cdots, I_{r})}_{!}$ to the isomorphism in Lemma \ref{l:IC Sht refine}(1).
\end{proof}


\subsection{Hecke action}

Let $V\in \Rep(\dG^{I})$.

\sss{} For any $x\in \Sig$, the vector space $\bH_{x}:=\Qlbar[K_{x}\bs G(F_{x})/K_{x}]$ of compactly supported bi-$K_{x}$-invariant functions on $G(F_{x})$ is a $\Qlbar$-algebra under convolution, with unit $\one_{K_{x}}$. We have a ring homomorphism
\begin{equation*}
\bH_{x}\to \End(\cH^{I}(V))
\end{equation*}
defined as follows. 

By Corollary \ref{c:same HV}, it suffice to construct an action of $\cH_{x}$ on $\cH^{(I_{1},\cdots, I_{r})}(V)$, where $I_{j}$ are singletons. Also we may reduce to the case $V$ is irreducible hence $V=\boxtimes_{i\in I}V_{i}$ for irreducible representations $V_{i}$ of $\dG$ with highest weight $\l_{i}$. Let $\un\l=(\l_{i})_{i\in I}$. For $[g_{x}]\in K_{x}\bs G(F_{x})/K_{x}$, recall the Hecke correspondence \eqref{Hk x Sig} where both $\oll{c}$ and $\orr{c}$ are finite \'etale by Lemma \ref{l:change level Sig}. 

We can similarly construct the relative position map 
\begin{equation*}
\ev_{\Sht[g_{x}]}^{(I_{1},\cdots, I_{r})}: \Sht^{(I_{1},\cdots, I_{r})}_{G,\bK}[g_{x}]\to (\QGr_{G}/\Aut(D))^{I}
\end{equation*}
compatible with the maps $\oll{c}$ and $\orr{c}$ and the relative position map for $\Sht^{(I_{1},\cdots, I_{r})}_{G,\bK}$ as in \eqref{ev Sht singleton}. Thus we get a canonical isomorphism
\begin{equation}\label{corr IC}
\oll{c}^{*}\IC_{\Sht}^{I}(V)\isom \orr{c}^{*}\IC_{\Sht}^{I}(V)\cong\orr{c}^{!}\IC_{\Sht}^{I}(V).
\end{equation}
The action of $\one_{K_{x}g_{x}K_{x}}$ on $\cH^{I}(V)$ is defined to be the composition 
\begin{equation*}
\cH^{I}(V)=\pi^{I}_{!}\IC_{\Sht}^{I}(V)\to \pi^{I}_{!}\oll{c}_{!}\oll{c}^{*}\IC_{\Sht}^{I}(V)\xr{\eqref{corr IC}} \pi^{I}_{!}\orr{c}_{!}\orr{c}^{!}\IC_{\Sht}^{I}(V)\to \pi^{I}_{!}\IC_{\Sht}^{I}(V)=\cH^{I}(V)
\end{equation*}
Here we use the unit and counit of adjunctions $\id \to \oll{c}_{*}\oll{c}^{*}\cong \oll{c}_{!}\oll{c}^{*}$ (because $\oll{c}$ is proper), and $\orr{c}_{!}\orr{c}^{!}\to\id$.

\sss{Spherical Hecke action} For $x\in |U|=|X|-\Sig$, a similar construction gives a ring homomorphism
\begin{equation*}
\bH_{x}:=\Qlbar[G(\cO_{x})\bs G(F_{x})/G(\cO_{x})]\to \End(\cH^{I}(V)|_{(U-\{x\})^{I}}).
\end{equation*}

\begin{remark} The action of $\bH_{x}$ on $\cH^{I}(V)|_{(U-\{x\})^{I}}$ for $x\in |U|$ has been extended to the whole of $\cH^{I}(V)$ by V. Lafforgue's, by identifying it with a special case of an excursion operator.
\end{remark}

\sss{} Let $\Sig'\subset |U|$ be any finite set of places. Combining the Hecke actions defined above, we get a ring homomorphism
\begin{equation*}
\bH_{\Sig'}=\ot_{x\in \Sig'}\bH_{x}\to \End(\cH^{I}(V)|_{(U-\Sig')^{I}}).
\end{equation*}
Taking colimit as $\Sig'$ grows, we get an action of the global Hecke algebra on the restriction of $\cH^{I}(V)$ to the generic point $\y^{I}\to X^{I}$
\begin{equation*}
\bH_{G,\bK}:=\ot'_{x\in |X|}\bH_{x}\to \End(\cH^{I}(V)|_{\y^{I}}).
\end{equation*}

\subsection{Factorization structure}

\sss{The expectation}
Suppose $V=\boxtimes_{i\in I}V_{i}$ where $V_{i}\in \Rep(\dG)$. Then $\cH^{I}(V)$ is {\em expected} to satisfy the following factorization property.  Let $\pi$ be a cuspidal automorphic representation of $G(\AA)$. Let $\pi(\bK)\subset \pi$ be the invariant vectors under $\prod_{x\notin\Sig}G(\cO_{x})\times \prod_{x\in \Sig}K_{x}$, where $K_{x}=\bK_{x}(k_{x})$. Note that $\pi(\bK)$ is an $\bH_{G,\bK}$-module. By the work of V. Lafforgue \cite{VLaff}, one can attach a semisimple Langlands parameter $\r_{\pi}$ to $\pi$, which is a $\dG$-local system on $U$. Using excursion operators of V. Lafforgue,  it is possible to define the $\pi$-isotypic subsheaf of $H^{i}\cH^{I}(V)$ (the $i$th cohomology sheaf of $\cH^{I}(V)$, $i\in \ZZ$), which we denote by $H^{i}\cH^{I}(V)[\pi]$. As a first approximation, we can define $H^{i}\cH^{I}(V)[\pi]$ to be $\Hom_{\bH_{G,\bK}}(\pi(\bK), H^{i}\cH^{I}(V))$ (which may be larger than the actual $H^{i}\cH^{I}(V)[\pi]$; the subtlety being  that the $\bH_{G,\bK}$-module $\pi(\bK)$ may not determine the isomorphism class of $\r_{\pi}$).

We make a simplifying assumption that $\Aut(\r_{\pi})=Z\dG$. For example this is known to be the case when $G=\GL_{n}$ and $\pi$ is cuspidal (in which case $\r_{\pi}$ is an irreducible rank $n$ local system). Then it is expected that 
\begin{equation*}
H^{0}\cH^{I}(V)[\pi]\stackrel{?}{\simeq} M_{\pi}\ot \boxtimes_{i\in I}(V_{i,\r_{\pi}}),
\end{equation*}
where $V_{i,\r_{\pi}}$ is the local system on $U$ induced from $\r_{\pi}$ and the $\dG$-representation $V_{i}$, and $M_{\pi}$ is a finite-dimensional vector space depending on $\pi$. Also it is expected that in this case $H^{i}\cH^{I}(V)[\pi]=0$  for $i\ne0$. For more precise statements, which are analogs of Arthur's multiplicity formula for automorphic forms over number fields, we refer to the paper of Xinwen Zhu \cite[\S4.7]{Zhu}.

The above expectation in particular suggests that on the geometric generic stalk of $\cH^{I}(V)$, there are commuting actions of $I$ copies of the fundamental group $\pi_{1}(U,\ov \y)$.  This fact is proved by Cong Xue \cite{X}, generalizing the result of V. Lafforgue \cite{VLaff}.  Their proofs rely on partial Frobenius (to be discussed in \S\ref{sss:pF}) and Drinfeld's lemma.

\sss{Behavior along partial diagonals} Below we make precise the behavior of $\cH^{I}(V)$ when restricted to the partial diagonals.

A map of finite sets $\th: I\to J$ induces a group homomorphism $\D_{\dG, \th}: \dG^{J}\to \dG^{I}$, hence a restriction functor on representations
\begin{equation*}
\Res_{\th}: \Rep(\dG^{I})\to \Rep(\dG^{J}).
\end{equation*}
 
\begin{prop} 
For a surjection $\th: I\surj J$ of finite sets, we have the partial diagonal map $\D_{\th,U}: U^{J}\incl U^{I}$.  Then we have a canonical isomorphism
\begin{equation*}
\D_{\th,U}^{*}\cH^{I}(V)\isom \cH^{J}(\Res_{\th}V).
\end{equation*}
Moreover, this isomorphism is functorial in $V\in \Rep(\dG^{I})$, and compatible with compositions of surjections $I\surj J\surj K$.
\end{prop}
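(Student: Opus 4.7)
The plan is to reduce to two ingredients: (i) a factorization property of the iterated Shtuka stack along partial diagonals, and (ii) the fusion commutativity of geometric Satake.

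Let $I_j = \th^{-1}(j)$ and fix an ordering $J = \{j_1,\ldots,j_r\}$, giving a decomposition $I = I_{j_1}\coprod\cdots\coprod I_{j_r}$ grouped by the fibers of $\th$. By Corollary \ref{c:same HV}, it suffices to identify $\D_{\th,U}^*\cH^{(I_{j_1},\ldots,I_{j_r})}(V)$ with $\cH^{(\{j_1\},\ldots,\{j_r\})}(\Res_\th V)$. A direct check on $S$-points shows that restricting the iterated Shtuka stack to the partial diagonal collapses the modification locus $\cup_{i\in I_{j_k}}\G(x_i)$ to the single graph $\G(y_{j_k})$, so the data at step $k$ becomes precisely a Hecke modification at $y_{j_k}$. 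This yields a Cartesian square
\begin{equation*}
\xymatrix{
\Sht^{(\{j_1\},\ldots,\{j_r\})}_{G,\bK} \ar[r]^-{\widetilde\D} \ar[d]_-{\pi^J} & \Sht^{(I_{j_1},\ldots,I_{j_r})}_{G,\bK} \ar[d]^-{\pi^I} \\
U^J \ar[r]^-{\D_{\th,U}} & U^I
}
\end{equation*}
with $\widetilde\D$ a closed immersion, and proper base change reduces the proof to producing a canonical isomorphism $\widetilde\D^*\IC^{(I_{j_1},\ldots,I_{j_r})}_{\Sht}(V) \cong \IC^{(\{j_1\},\ldots,\{j_r\})}_{\Sht}(\Res_\th V)$.

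For this core identification, I would further refine each $I_j$ into singletons (consecutively ordered) and apply Lemma \ref{l:IC Sht refine}(1) to rewrite $\IC^{(I_{j_1},\ldots,I_{j_r})}_{\Sht}(V)$ as the pushforward of the all-singletons sheaf $(\ev^{(\mathrm{sing})}_{\Sht})^*\Sat_I(V)$ along the refinement map. By the étale-local model of Proposition \ref{p:local model}, the singleton-iterated Shtuka stack is locally modeled by the Beilinson--Drinfeld Grassmannian $\Gr^I_G$ over $U^I$, whose restriction along $\D_{\th,U}(U^J)\hookrightarrow U^I$ is $\Gr^J_G$ (with additional intermediate modifications within each group, coming from the refinement). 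The restriction of the external Satake sheaf $\Sat_I(V)$ along this partial diagonal is canonically $\Sat_J(\Res_\th V)$; this is the fusion commutativity constraint of Beilinson--Drinfeld and Mirković--Vilonen that defines the monoidal structure on $\Sat$. Pushing the resulting identification forward along the refinement map on the $J$-side, and invoking Lemma \ref{l:IC Sht refine}(1) again for $\Res_\th V$, yields $\IC^{(\{j_1\},\ldots,\{j_r\})}_{\Sht}(\Res_\th V)$.

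Functoriality in $V$ is immediate from the functoriality of $\Sat_I$, and compatibility with a composition of surjections $I\surj J\surj K$ follows from the transitivity of both the BD-Grassmannian factorization and the Satake fusion at the local model level. The main obstacle is in the core identification: one must verify with care that the étale-local isomorphism of Proposition \ref{p:local model} intertwines the Shtuka relative position map into $(\QGr_G/\Aut(D))^I$ with the natural projection $\Gr^I_G \to (\QGr_G/\Aut(D))^I$ obtained by choosing local coordinates at each leg, and that all commutativity constraints match, so that the Beilinson--Drinfeld fusion identity genuinely descends to a global isomorphism of $\IC$-sheaves on the Shtuka stack rather than merely an étale-local one.
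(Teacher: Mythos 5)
Your proof is correct and follows essentially the same route as the paper: reduce by proper base change and the refinement lemmas to a statement about Satake sheaves on iterated modifications at a single point, then invoke the monoidal/fusion structure of geometric Satake. The only cosmetic difference is that the paper first passes from $\Sht$ to $\Hk$ via the defining Cartesian square and phrases the local input as the convolution identity $\g_{!}\d^{*}\Sat_{I}(V)\cong\Sat(\Res_{\th}V)$ on the local convolution Grassmannian $\QGr^{[s]}_{G}$, whereas you phrase it as fusion on the Beilinson--Drinfeld local model; these are two packagings of the same tensor structure, and the compatibility you flag at the end is exactly what the paper's explicit diagram of evaluation maps verifies.
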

\begin{proof}[Proof sketch]
By proper base change, it suffices to check the corresponding statement for the Satake sheaves on $\Hk^{J}_{G,\bK}$:
\begin{equation*}
\IC^{I}_{\Hk}(V)|_{\Hk^{J}_{G,\bK}}\cong \IC^{J}_{\Hk}(\Res_{\th}(V)).
\end{equation*}
Here we use the isomorphism \eqref{Hk diagonal} to view $\Hk^{J}_{G,\bK}$ as a substack of $\Hk^{I}_{G,\bK}$ (restriction via $\D_{\th, U}: U^{J}\incl U^{I}$). Choose a bijection $\s: J\isom \{1,\cdots, r\}$ and let $I_{j}=\th^{-1}(\s'^{-1}(j))$ for $1\le j\le r'$.  Then we have a Cartesian diagram
\begin{equation*}
\xymatrix{ \Hk^{(\{1\},\cdots, \{r\})}_{G,\bK}\ar[r]^{\y}\ar[d] & \Hk^{(I_{1},\cdots, I_{r})}_{G,\bK}\ar[d]\\
\Hk^{J}_{G,\bK}\ar[r] & \Hk^{I}_{G,\bK}}
\end{equation*}
By proper base change and Lemma \ref{l:IC Hk refine}, it suffices to show that
\begin{equation}\label{IC fact b}
\y^{*}\IC^{(I_{1},\cdots, I_{r})}_{\Hk}(V)\cong \IC^{(\{1\},\cdots ,\{r\})}_{\Hk}(\Res_{\th}V).
\end{equation}

Below we treat the case $J$ itself is a singleton. The general case requires no more new idea. So we assume $J=\{1\}$ is a singleton. Choose a bijection $\s: I\isom \{1,\cdots, s\}$, and define 
\begin{equation*}
\Hk^{[s]}_{G,\bK}=\Hk^{(\{1\},\cdots,\{s\})}_{G,\bK}\times_{U^{s}}\D(U).
\end{equation*}
Then $\Hk^{[s]}_{G,\bK}$ classifies modifications of $G$-torsors with $\bK$-level structures $\cE_{0}\dxr\cE_{1}\dxr\cdots\dxr\cE_{s}$ all along the same point $x\in U$.  Let $\QGr^{[s]}_{G}$ be the moduli stack  whose $R$-points are $G$-torsors $(\cE_{0},\cdots, \cE_{s})$ over the disk $D_{R}$ with isomorphisms $\a_{i}: \cE_{i-1}|_{D_{R}^{\times}}\isom\cE_{i}|_{D^{\times}_{R}}$ for all $1\le i\le s$. Then we have an evaluation map $\ev^{[s]}: \Hk^{[s]}_{G,\bK}\to \QGr^{[s]}_{G}/\Aut(D)$ and a diagram in which all three squares are Cartesian
\begin{equation*}
\xymatrix{  & \QGr^{[s]}/\Aut(D) \ar[rr]^{\d}\ar[dd]_{\g}& & (\QGr/\Aut(D))^{s}\\
\Hk^{[s]}_{G,\bK}\ar[rr]^{\wt\y}\ar[dd]\ar[ur]^{\ev^{[s]}} && \Hk^{(\{1\},\cdots,\{s\})}_{G,\bK}\ar[ur]^{\ev^{(\{1\},\cdots, \{s\})}}\ar[dd]\\
& \QGr/\Aut(D)\\
\Hk^{\{1\}}_{G,\bK}\ar[ur]^{\ev^{\{1\}}}\ar[rr]^{\y} && \Hk^{I}_{G,\bK}
}
\end{equation*}
The map $\d$ is induced from $\QGr^{[s]}\to (\QGr)^{s}$, which sends $((\cE_{i})_{0\le i\le r},(\a_{i})_{1\le i\le r})$ to $(\cE_{i-1},\cE_{i}, \a_{i})_{1\le i\le s}$; the map $\g$ is induced from $\QGr^{[s]}\to \QGr$ that  sends $((\cE_{i})_{0\le i\le r},(\a_{i})_{1\le i\le r})$ to $(\cE_{0},\cE_{s}, \a_{r}\c\cdots\c\a_{1})$. 

Again by proper base change, \eqref{IC fact b} reduces to the isomorphism
\begin{equation*}
\g_{!}\d^{*}\Sat_{\{1,\cdots , s\}}(V)\cong \Sat(\Res^{\dG^{s}}_{\dG} V).
\end{equation*}
By linearity, it suffices to prove this for $V=\boxtimes_{i=1}^{s} V_{i}$, in which case the left side is the convolution $\Sat(V_{1})\star\cdots\star\Sat(V_{s})$ in the tensor category $\Perv_{\Aut(D)}(\QGr)$, while the right side is $\Sat(V_{1}\ot\cdots\ot V_{s})$. The desired isomorphism comes from the monoidal structure of the Satake equivalence.
\end{proof}

\subsection{Partial Frobenius}\label{sss:pF}
Let $I=I_{1}\coprod\cdots\coprod I_{r}$ be a decomposition into {\em singletons}. This gives a bijection $\s: I\isom \{1,\cdots, r\}$ such that  $I_{j}=\{\s^{-1}(j)\}$.  Recall the partial Frobenius map $\Fr^{\pl}$ defined in \eqref{pl Fr}. 
We have a commutative diagram
\begin{equation}\label{pl Frob ev}
\xymatrix{ \Sht^{(I_{1},\cdots, I_{r})}_{G,\bK}\ar[d]^{\Fr^{\pl}}\ar[rr]^-{\ev^{(I_{1},\cdots, I_{r})}} && (\QGr/\Aut(D))^{I}\ar[d]^{(\Fr, \id,\cdots, \id)}\\
\Sht^{(I_{2},\cdots, I_{r}, I_{1})}_{G,\bK}\ar[rr]^-{\ev^{(I_{2},\cdots, I_{1})}} && (\QGr/\Aut(D))^{I}}
\end{equation}
Here the right vertical map is the Frobenius on the first factor $I_{1}$, and the identity map for the other factors.

For the irreducible representation $V_{\l}\in \Rep(\dG)$ with highest weight $\l$, fix a Weil structure on $\Sat(V_{\l})$ whose restriction on $\Gr_{G,\l}$ is $\Qlbar[\j{2\r, \l}](\j{\r,\l})$ (which involves a choice of $q^{1/2}$ if $\j{\r,\l}$ is a half integer). Extend these Weil structures by additivity to $\Sat(V)$ for all $V\in \Rep(\dG)$. More generally these Weil structures extend to Weil structures on $\Sat_{I}(V)\in \Perv((\QGr/\Aut(D))^{I})$ for any finite set $I$ and $V\in \Rep(\dG^{I})$, compatibly with external tensor products.

Now let $V=\boxtimes_{i\in I}V_{i}\in \Rep(\dG^{I})$. We denote $V_{\s^{-1}(j)}$ also by $V_{j}$, for $1\le j\le r$. From the diagram \eqref{pl Frob ev} and using the Weil structure on $\Sat(V_{1})$ fixed above, we get an isomorphism
\begin{equation}\label{pl Fr pullback IC}
\IC^{(I_{1},\cdots, I_{r})}_{\Sht}(V)\cong \Fr^{\pl*}\IC^{(I_{2},\cdots, I_{1})}_{\Sht}(V).
\end{equation}
On the other hand, proper base change for the topologically Cartesian diagram \eqref{pl Fr UI} gives an isomorphism
\begin{equation*}
\Fr_{I_{1}}^{*}\cH^{I}(V)= \Fr_{I_{1}}^{*}\pi^{(I_{2},\cdots, I_{1})}_{!}\IC^{(I_{2},\cdots, I_{1})}_{\Sht}(V)\isom  \pi^{(I_{1},\cdots, I_{r})}_{!}\Fr^{\pl*}\IC^{(I_{2},\cdots, I_{1})}_{\Sht}(V)
\end{equation*}
Combined with \eqref{pl Fr pullback IC} we get an isomorphism
\begin{equation*}
\phi_{I_{1}}: \Fr_{I_{1}}^{*}\cH^{I}(V)\cong \pi^{(I_{1},\cdots, I_{r})}_{!}\Fr^{\pl*}\IC^{(I_{2},\cdots, I_{1})}_{\Sht}(V)\cong \pi^{(I_{1},\cdots, I_{r})}_{!}\IC^{(I_{1},\cdots, I_{r})}_{\Sht}(V)=\cH^{I}(V).
\end{equation*}
This isomorphism is independent of the decomposition of $I-I_{1}$ into singletons.  Also, such isomorphisms extend by additivity to all $V\in \Rep(\dG^{I})$.

Since we can choose the bijection $\s$ so that $I_{1}$ is any given element $i\in I$, we a canonical isomorphism for all $i\in I$ and $V\in\Rep(\dG^{I})$:
\begin{equation*}
\phi_{i}:  \Fr_{i}^{*}\cH^{I}(V)\isom  \cH^{I}(V).
\end{equation*}
Here $\Fr_{i}: U^{I}\to U^{I}$ is the Frobenius on the $i$-th factor and the identity elsewhere.

The following is easy to check from the definition.
\begin{prop}
The isomorphisms $(\phi_{i})_{i\in I}$ commute with each other up to canonical isomorphisms, and their composition is the canonical Weil structure on $\cH^{I}(V)$, i.e., an isomorphism $\Fr^{*}_{U^{I}}\cH^{I}(V)\isom  \cH^{I}(V)$ induced from the Weil structure on $\Sat_{I}(V)$.
\end{prop}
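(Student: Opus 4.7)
The plan is to verify both assertions by exploiting the geometric partial Frobenius maps $\Fr^{\pl}$ between Shtuka stacks for different orderings, together with the ordering independence of $\cH^{I}(V)$ from Corollary~\ref{c:same HV}, and then pushing everything down to cohomology by proper base change.

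For commutativity, fix distinct $i,j\in I$. First choose a bijection $\s:I\isom\{1,\dots,r\}$ with $\s(i)=1$, $\s(j)=2$, and set $I_{k}=\{\s^{-1}(k)\}$. I will iterate the partial Frobenius twice:
\[
\Sht^{(I_{1},I_{2},I_{3},\dots,I_{r})}_{G,\bK}\xr{\Fr^{\pl}}\Sht^{(I_{2},I_{3},\dots,I_{r},I_{1})}_{G,\bK}\xr{\Fr^{\pl}}\Sht^{(I_{3},\dots,I_{r},I_{1},I_{2})}_{G,\bK},
\]
which sits over the commutative square on $U^{I}$ built from $\Fr_{i}$ and $\Fr_{j}$, and which is topologically Cartesian by \eqref{pl Fr UI}. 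Pulling back $\IC^{(I_{3},\dots,I_{1},I_{2})}_{\Sht}(V)$ twice using \eqref{pl Fr pullback IC} (applied to the leading factor at each step) and then pushing forward via $\pi_{!}$ yields, after base change, a canonical isomorphism $\Fr_{j}^{*}\Fr_{i}^{*}\cH^{I}(V)\isom\cH^{I}(V)$, which by construction is $\phi_{j}\c \Fr_{j}^{*}(\phi_{i})$. Next take $\s'$ with $\s'(j)=1$, $\s'(i)=2$ and repeat the same procedure; the two IC-pullbacks now use the Weil structures on $\Sat(V_{j})$ and $\Sat(V_{i})$ in the opposite order, and after $\pi_{!}$ this yields $\phi_{i}\c\Fr_{i}^{*}(\phi_{j})$. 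To see that the two compositions agree, I observe that the two target Shtuka stacks $\Sht^{(I_{3},\dots,I_{1},I_{2})}_{G,\bK}$ and $\Sht^{(I'_{3},\dots,I'_{1},I'_{2})}_{G,\bK}$ differ only by a transposition of the last two parts, and Corollary~\ref{c:same HV} produces a canonical isomorphism between $\cH^{(I_{1},\dots,I_{r})}(V)$ for any two such orderings; the commutativity constraint in the Satake equivalence matches the two $\IC$-pullback identifications along this reordering. This gives the desired canonical $2$-isomorphism $\phi_{i}\c\Fr_{i}^{*}(\phi_{j})\cong\phi_{j}\c\Fr_{j}^{*}(\phi_{i})$.

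For the second assertion, iterating $\Fr^{\pl}$ exactly $r=|I|$ times returns to $\Sht^{(I_{1},\dots,I_{r})}_{G,\bK}$ and equals the absolute Frobenius $\Fr$, as remarked in the paragraph preceding \eqref{pl Fr UI}. Running the construction of $\phi_{\bu}$ through this iteration yields, after proper base change through \eqref{pl Fr UI} and $r$-fold use of \eqref{pl Fr pullback IC}, a composite isomorphism $\Fr_{U^{I}}^{*}\cH^{I}(V)\isom\cH^{I}(V)$ equal to $\phi_{i_{1}}\c\Fr^{*}_{i_{1}}(\phi_{i_{2}})\c\cdots$ for any chosen ordering $\{i_{1},\dots,i_{r}\}=I$. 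On the IC side the composite is built from the Weil structures on each $\Sat(V_{k})$, which were defined to multiply (under external tensor product) to the fixed Weil structure on $\Sat_{I}(V)$; transporting this through $(\ev^{(I_{1},\dots,I_{r})})^{*}$ and $\pi^{I}_{!}$ exhibits the composite $\phi_{i_{1}}\c\cdots\c\Fr^{*}(\phi_{i_{r}})$ as the canonical Weil structure on $\cH^{I}(V)=\pi^{I}_{!}\IC^{I}_{\Sht}(V)$ induced from $\Sat_{I}(V)$.

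The main technical obstacle I expect is bookkeeping: checking that the identification \eqref{pl Fr pullback IC} is compatible with cyclic permutations of the ordering and with the commutativity constraint in the geometric Satake equivalence, so that the two $2$-isomorphisms obtained from the orderings $(i,j,\dots)$ and $(j,i,\dots)$ are genuinely the same after using Corollary~\ref{c:same HV}. Once this compatibility is established, both assertions follow formally from proper base change applied to the Cartesian/topologically Cartesian diagrams built from $\Fr^{\pl}$ and $\Fr_{I_{1}}$.
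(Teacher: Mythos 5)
Your proposal is correct and is essentially the intended argument: the paper itself offers no proof beyond ``easy to check from the definition,'' and your elaboration via iterated partial Frobenius, proper base change through the topologically Cartesian squares \eqref{pl Fr UI}, and the multiplicativity of the chosen Weil structures on $\Sat(V_{i})$ under external tensor product is exactly how one checks it. The one point you rightly flag --- compatibility of \eqref{pl Fr pullback IC} with the reordering isomorphisms of Corollary~\ref{c:same HV} and the commutativity constraint of geometric Satake --- is indeed the only nontrivial bookkeeping, and it goes through because each $\phi_{i}$ is already shown to be independent of the chosen decomposition of $I-\{i\}$ into singletons.
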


In particular, for any subset $J\subset I$, we have a canonical isomorphism $\phi_{J}: \Fr_{J}^{*}\cH^{I}(V)\isom  \cH^{I}(V)$, where $\Fr_{J}: U^{I}\to U^{I}$ is the Frobenius on $U^{J}$ and the identity on the other factors. It is defined as the composition of $\phi_{i}, i\in J$ in any order.

\section{Advanced topics}\label{s:adv}
In this section we give a survey of several aspects of the moduli of Shtukas that deserve further study. 

\subsection{Compactification}
The non-properness of the map $\pi^{(I_{1},\cdots, I_{r}),\le \un\l}_{G,\bK}: \Sht^{(I_{1},\cdots, I_{r}),\le \un\l}_{G,\bK}\to U^{I}$ causes difficulty in studying its direct image complex, and in particular the cohomology of its generic fiber. For example, had it been proper, the complex $\cH^{I}(V)$ would be automatically pure, which would have consequences on the Ramanujan-Petersson conjecture for automorphic forms over function fields. 

Since $\pi^{(I_{1},\cdots, I_{r}),\le \un\l}_{G,\bK}$ is not of finite type, to compactify it, one has to restrict to a Harder-Narasimhan truncation ${}^{\le \mu}\Sht^{(I_{1},\cdots, I_{r}),\le \un\l}_{G,\bK}$ as in \S\ref{sss:HN}.

In \cite{Dr-comp}, for the Drinfeld example \ref{ex:Dr} in  the case $G=\GL_{2}$, Drinfeld constructed a compactification of (the truncations of) $\Sht^{(I_{1},I_{2}),(\l_{1},\l_{2})}_{\GL_{2}, \bK}$. In \cite{Laff-comp}, when there is no level structure,  the construction of Drinfeld was generalized to $\GL_{n}$. In \cite{LLaff}, a compactification for the Drinfeld example \ref{ex:Dr} with level structures was constructed (with less preferable properties) and used to prove the global Langlands correspondence for function fields. In \cite{NDT} Ng\^o Dac Tuan gave an alternative construction of the compactification using geometric invariant theory. 

The basic idea of these constructions is to relax the isomorphism $\io: \cE_{r}\isom {}^{\t}\cE_{0}$ in the modular description in \S\ref{sss:Sht points}, and replace it with a point in the Vinberg monoid attached to $G$. In the case $G=\GL_{n}$, such a point is commonly known as a complete homomorphism between two vector spaces or vector bundles. For general $G$, an unpublished manuscript of Ng\^o Dac Tuan and Varshavsky gives a construction of the compactifications of the moduli of Shtukas along these lines. 

To the best of the author's knowledge, it is an open question to describe the singularities of the compactification of $\Sht^{(I_{1},\cdots, I_{r}),\le\un\l}_{G,\bK}$.


\subsection{Algebraic cycles}
We give a brief survey of examples of algebraic cycles on the moduli stack of Shtukas. For simplicity we do not consider level structures in the discussions below.

\sss{Horocycles} The definition of the iterated Hecke stack given in \S\ref{sss:it Hk} makes sense for any algebraic group, not necessarily reductive. Therefore the moduli stack of iterated Shtukas is also defined for an arbitrary algebraic group over $k$. We apply this construction to a parabolic subgroup $P\subset G$ with Levi quotient $L$. For a finite set $I$ with a decomposition $I=I_{1}\coprod \cdots\coprod I_{r}$ and a test scheme $S$ over $k$, $\Sht^{(I_{1},\cdots, I_{r})}_{P}(S)$ classifies tuples
$$((x_{i})_{i\in I}, (\cE_{P,j})_{0\le j\le r}, (\a_{j}: \cE_{P, j-1}\dr\cE_{P, j})_{1\le j\le r}, \io)$$
Here the only difference with the description of $\Sht^{(I_{1},\cdots, I_{r})}_{G}(S)$ in \S\ref{sss:Sht points} is that $\cE_{P,j}$ are $P$-torsors over $X$. By the functoriality of the construction with respect to the change of groups, the canonical maps $G\leftarrow{P}\to L$ induce maps
\begin{equation*}
\xymatrix{ \Sht_{G}^{(I_{1},\cdots, I_{r})} & \ar[l]_{\frp}\Sht_{P}^{(I_{1},\cdots, I_{r})} \ar[r]^{\frq}& \Sht_{L}^{(I_{1},\cdots, I_{r})}.
}
\end{equation*}
The map $\frp$ turns $\cE_{P,j}$ into the induced $G$-torsor $\cE_{j}=\cE_{P,j}\times^{P}G$, and $\frq$ turns $\cE_{P,j}$ into the induced $L$-torsor $\cE_{L,j}=\cE_{P,j}/U_{P}$ (where $U_{P}$ is the unipotent radical of $P$).

To define a bounded version of $\Sht^{(I_{1},\cdots, I_{r})}_{P}$, we simply take an admissible tuple $\un\mu=(\mu_{i})_{i\in I}$ of dominant coweights for $L$, and let 
\begin{equation*}
\Sht_{P}^{(I_{1},\cdots, I_{r}), \le\un\mu}=\frq^{-1}(\Sht_{L}^{(I_{1},\cdots, I_{r}), \le\un\mu} ). 
\end{equation*}
It was shown in \cite[Proposition 5.7]{Var} that, if we restrict over the {\em generic point $\y^{I}$} of $U^{I}$,  the map 
$$\frp^{\le\un\mu}_{\y^{I}}: \Sht_{P,\y^{I}}^{(I_{1},\cdots, I_{r}), \le\un\mu}\to \Sht_{G,\y^{I}}^{(I_{1},\cdots, I_{r})}$$ 
obtained as the restriction of $\frp$ is finite and unramified. The image of the fundamental cycle of $\Sht_{P,\y^{I}}^{(I_{1},\cdots, I_{r}), \le\un\mu}$ define {\em horocycles} on $\Sht_{G,\y^{I}}^{(I_{1},\cdots, I_{r})}$.

Roughly speaking, the cycle classes of horocycles are responsible for the Eisenstein part of the cohomology of $\Sht_{G}^{(I_{1},\cdots, I_{r})}$. For the $\GL_{2}$ example in \ref{ex:Dr}, the geometry of the horocycles were analyzed in \cite[\S4]{Dr-FBun} and its relation with Eisenstein series was studied in detail \cite[\S3]{Dr-Pet}.

It is an open question in general to describe the Eisenstein part of the cohomology of $\Sht_{G}^{(I_{1},\cdots, I_{r}),\le\un\l}$ in the style of the spectral decomposition of the space of automorphic forms. See \cite[\S7]{YZ1} for a qualitative statement in the case $G=\GL_{2}$, and a finiteness result for general $G$ in \cite{X}.

\sss{Special cycles}

Special cycles on Shimura varieties are given by Shimura subvarieties, i.e., they come from reductive subgroups of $G$. We can similarly consider special cycles on the moduli stack of Shtukas. Namely if $\ph: H\to G$ is a homomorphism of reductive groups over $k$, by the functoriality of the notion of Shtukas with respect to change of groups, it induces a map of moduli stacks
$$\th: \Sht^{(I_{1},\cdots, I_{r})}_{H}\to \Sht^{(I_{1},\cdots, I_{r})}_{G}.$$ 
For the bounded version one can start with a tuple $\un\mu$ of dominant coweights for $H$ and choose $\un\l$ for $G$ large enough so that $\th$ sends $\Sht^{(I_{1},\cdots, I_{r}), \le\un\nu}_{H}$ to $\Sht^{(I_{1},\cdots, I_{r}), \le\un\l}_{G}$.  It is proved by Breutmann \cite{Br} that $\th$ is schematic, finite and unramified, in the generality where one is allowed to impose parahoric level structures on $\Sht^{(I_{1},\cdots, I_{r})}_{H}$.  See also \cite{Y-cycle} for an alternative proof. The image of $\th$ therefore defines an element in the Chow group of $\Sht^{(I_{1},\cdots, I_{r}),\le\un\l}_{G}$ for suitable $\un\l$.

We give some examples of special cycles of arithmetic significance.  Some of these examples don't strictly fit into the situation above.

\begin{exam}[Heegner-Drinfeld cycles]\label{ex:HD} Let $\nu: X'\to X$ be an unramified double cover. Let $r$ be even and let $I=\{1,2,\cdots, r\}$ be decomposed into singletons $I_{i}=\{i\}$. Let $\un\mu=(\mu_{i})_{1\le i\le r}$, where each $\mu_{i}$ is either $(1,0,\cdots,0)$ or $(0,\cdots, 0,-1)$ (minuscule coweights of $\GL_{n}$). Let $\Sht^{\un\mu}_{n,X'}$ be the moduli stack $\Sht^{(I_{1},\cdots, I_{r}),\le\un\mu}_{\GL_{n},X'}$ of iterated rank $n$ Shtukas over $X'$ with bounds given by $\un\mu$. 

Let $\un\l=(\l_{i})_{1\le i\le r}$ be the collection of minuscule dominant coweights of $\GL_{2n}$ determined by $\un\mu$: if $\mu_{i}=(\underbrace{0,\cdots, 0}_{n-1},-1)$ then $\l_{i}=(\underbrace{0,\cdots,0}_{2n-1}, -1)$; if $\mu_{i}=(1,\underbrace{0,\cdots,0}_{n-1})$ then $\l_{i}=(1, \underbrace{0,\cdots,0}_{2n-1})$. Similarly denote by $\Sht^{\un\l}_{2n,X}$ the moduli stack $\Sht^{(I_{1},\cdots, I_{r}),\le\un\l}_{\GL_{2n},X}$ of iterated rank $2n$ Shtukas over $X$ with bounds given by $\un\l$. 

We have a map
\begin{equation*}
\th^{\un\mu}_{\HD(n)}: \Sht^{\un\mu}_{n, X'}\to \Sht^{\un\l}_{2n,X}
\end{equation*}
sending a rank $n$ Shtuka $((x'_{i}), (\cE_{i}), (\a_{i}), \io)$ over $X'$ to the rank $2n$ Shtuka $((\nu(x'_{i})), (\nu_{*}\cE_{i}),(\nu_{*}\a_{i}), \nu_{*}\io)$ over $X$. We call the image of the fundamental cycle under $\th^{\un\mu}_{\HD(n)}$ the {\em Heegner-Drinfeld cycle}. Since $\dim \Sht^{\un\mu}_{n,X'}=rn$ while $\dim \Sht^{\un\mu}_{2n,X}=2rn$ by Proposition \ref{p:local model}(2), we have
\begin{equation*}
\dim  \Sht^{\un\mu}_{n,X'}=\frac{1}{2}\dim \Sht^{\un\l}_{2n,X}.
\end{equation*}
Therefore the image of the fundamental class of $\Sht^{\un\mu}_{n,X'}$ under $\th^{\un\mu}_{\HD(n)}$ define a middle dimensional cycle on $\Sht^{\un\l}_{2n,X}$.

To see how this cycle is related to Heegner points, we consider the case $n=1$ and a variant of the above construction with one leg. Let $\infty\in |X|$ be a closed point that is inert in $X'$, so that $\infty'\in |X'|$ is the unique closed point over $\infty$.  Consider the Drinfeld modular (relative) curve $\Sht^{\Dr}_{2}$ in \S\ref{sss:one leg}. Let $\un\mu=(\mu_{1},\mu_{2})$ where $\mu_{1}=-1$ and $\mu_{2}=1$ as coweights of $\Gm$. Let $\cP_{X'}=\Sht^{(\{1\},\{2\}), \un\mu}_{\GL_{1}, X'}|_{U'\times \{\infty'\}}$; this is a $\Pic_{X'}(k)$-torsor over $U'=X'-\infty'$. We construct a map
\begin{equation*}
\th^{1}_{\HD(1)}: \cP_{X'}\to \Sht^{\Dr}_{2}.
\end{equation*}
For an $S$-point $\cL_{0}\leftarrow\cL_{0}(-x')\rightarrow\cL_{0}(-x'+y')\isom {}^{\t}\cL_{0}$ (where $y':S\to \infty'$) of $\cP_{X'}$, consider the rank two bundle $\cE=\nu_{*}\cL_{0}$ on $X'$ whose fiber over $S\times\infty$ has a splitting $\cE|_{S\times\infty}=\cE_{y'}\op \cE_{\s(y')}$ (where $\s\in \Gal(X'/X)$ is the nontrivial element). We equip $\cE|_{S\times \infty}$ with the full flag $\cE_{y'}\subset \cE_{S\times\infty}$, and define $\cE(-\frac{1}{2})\subset \cE$ to be the preimage of $\cE_{y'}$ under the restriction map $\cE\to \cE|_{S\times \infty}$. Then $\cE(-\frac{1}{2})=\nu_{*}\cL_{0}(-\s(y'))$. Hence ${}^{\t}(\cE(-\frac{1}{2}))=\nu_{*}({}^{\t}\cL_{0})(-y')\cong \nu_{*}\cL_{0}(-x')$ (using that $y'\c\Fr_{S}=\s(y')$ because $\infty$ is inert). Let $\phi: {}^{\t}(\cE(-\frac{1}{2}))\to \cE$ be induced by the embedding $\cL_{0}(-x')\incl \cL_{0}$. This defines an $S$-point $(\xi=\nu(x'), \cE, \{\cE(\frac{i}{2})\}, \phi)\in \Sht^{\Dr}_{2}(S)$. 

Using the dictionary between elliptic modules and Shtukas in \S\ref{sss:dict},  $\cP_{X'}$ classifies rank two elliptic modules on $X$ with endomorphisms by $A'=\G(U', \cO_{U'})$, which is a function field analog of elliptic curves with complex multiplication. Hence $\th(\cP_{X'})$ is the function field analog of Heegner points on modular curves. Generalizations of this construction with Iwahori level structures is carried out in \cite{YZ2}.
\end{exam}

\begin{exam}[Gan-Gross-Prasad cycles]\label{ex:GGP} 
Let $\nu: X'\to X$ be an unramified double cover. Let $\Ug_{n}$ be a unitary group scheme over $X$ with respect to $X'$. Without giving details about the nonsplit group $\Ug_{n}$ itself, let us spell out the moduli meanings of $\Bun_{\Ug_{n}}$ and $\Sht^{r}_{\Ug_{n}}$.

For a test scheme $S$ over $k$,  a rank $n$ \emph{Hermitian} (also called \emph{unitary}) bundle on $X \times S$ with respect to $\nu:X'\to X$ is a vector bundle $\cF$ of rank $n$ on $X' \times S$, equipped with an isomorphism $h : \cF \xrightarrow{\sim} \sigma^* \cF^{\vee}$ such that $\sigma^* h^{\vee} = h$. Here $\cF^{\vee}=\un\Hom(\cF, \om_{X'})$ is the Serre dual of $\cF$. Then $\Bun_{\Ug_{n}}$ is the stack whose $S$-points are the groupoid of rank $n$ Hermitian bundle on $X \times S$ with respect to $\nu$. 

An elementary lower modification of a rank $n$ Hermitian bundle $(\cF,h)$ on $X'\times S$ along $x': S\to X'$ is another rank $n$ Hermitian bundle $(\cF',h')$ on $X'\times S$ together with an isomorphism $\a:  \cF|_{X' \times S - \Gamma(x') - \Gamma(\sigma(x'))} \xrightarrow{\sim} \cF'|_{X' \times S - \Gamma(x')-\Gamma(\sigma(x'))}$, compatible with the Hermitian structures, with the following property: there exists a rank $n$ vector bundle $\cF^{\flat}$ (necessarily unique) such that $f$ can be factored as
\begin{equation*}
\xymatrix{
\cF &\ar@{_{(}->}[l]_-{\oll{\a}}\cF^{\flat}   \ar@{^{(}->}[r]^-{\orr{\a}} & \cF'}
\end{equation*}
such that $\coker(\oll{\a})$ is locally free of rank $1$ over $\Gamma(x')$, and $\coker(\orr{\a})$ is locally free of rank $1$ over $\Gamma(\sigma(x'))$. 

Let $r$ be an even integer. Define a stack $\Sht_{\Ug_{n}}^{r}$ whose $S$-points are given by the groupoid of the following data
$$((x'_{i})_{1\le i\le r}, (\cF_{i}, h_{i})_{0\le i\le r}, (\a_{i})_{1\le i\le r}, \io)$$
where $x'_{i}: S\to X'$, $(\cF_{i},h_{i})$ are rank $n$ Hermitian bundles over $X'\times S$, $\a_{i}: \cF_{i-1}\dr\cF_{i}$ is an elementary lower modification of $\cF_{i-1}$ along $x'_{i}$, and $\io$ is an isomorphism of Hermitian bundles $\cF_{r}\cong {}^{\t}\cF_{0}$. 

Now fix a rank one Hermitian bundle $\cL$ on $X'$ with respect to $\nu$, which is the same thing as a line bundle together with an isomorphism $\Nm_{X'/X}\cL\cong \om_{X}$.   We have the map
\begin{equation*}
\ka_{\cL}:\Sht^{r}_{\Ug_{n}}\to \Sht^{r}_{\Ug_{n+1}} 
\end{equation*}
sending $((x'_{i}), (\cF_{i}), \cdots)$ to $((x'_{i}), (\cF_{i}\op\cL),\cdots)$. Consider the diagonal map
\begin{equation*}
\th^{r}_{\GGP(n)}=(\id,\ka_{\cL}): \Sht^{r}_{\Ug_{n}}\to \Sht^{r}_{\Ug_{n}}\times_{X'^{r}}\Sht^{r}_{\Ug_{n+1}}.
\end{equation*}
By \cite[Lemma 6.9(2)]{FYZ1}, the map $\Sht^{r}_{\Ug_{n}}\to X'^{r}$ is smooth of relative dimension $r(n-1)$. Therefore 
$$\dim\Sht^{r}_{\Ug_{n}}=\frac{1}{2}(\dim\Sht^{r}_{\Ug_{n}}\times_{X'^{r}}\Sht^{r}_{\Ug_{n+1}}).$$
The image of the fundamental cycle of $\Sht^{r}_{\Ug_{n}}$ under $\th^{r}_{\GGP(n)}$ defines a middle dimensional cycle on the target. We denote its cycle class by $Z^{r}_{\GGP(n)}\in \cohog{2rn}{(\Sht^{r}_{\Ug_{n}}\times_{X'^{r}}\Sht^{r}_{\Ug_{n+1}})_{\ov k}}(rn)$.

Inspired by the arithmetic Gan-Gross-Prasad conjecture \cite{GGP}, we expect that for a cuspidal automorphic representation $\Pi$ of $\Ug_{n}(\AA_{F})\times \Ug_{n+1}(\AA_{F})$, the self-intersection number of the $\Pi$-isotypic component of $Z^{r}_{\GGP(n)}$ to be well-defined (note a priori it is not a cycle with proper support), and to be related to the $r$th central derivative of the Rankin-Selberg  $L$-function of the base change of $\Pi$ to $\GL_{n}(\AA_{F'})\times\GL_{n+1}(\AA_{F'})$, where $F'=k(X')$. 

\end{exam}

\begin{exam}[Kudla-Rapoport cycles]\label{ex:KR}
In \cite{FYZ1} we introduced a class of algebraic cycles that are function field analogs of the Kudla-Rapoport special cycles \cite{KR2} on unitary Shimura varieties.

We are in the setting of Example \ref{ex:GGP}. Let $\cE$ be a rank $m$ vector bundle on $X'$. We define the stack $\cZ_{\KR(n)}^{r}(\cE)$ whose $S$-points are the groupoid of the following data: 
 \begin{itemize}
 \item A Hermitian Shtuka $((x'_i)_{1\le i\le r}, (\cF_{i},h_{i})_{0\le i\le r}, (\a_{i})_{1\le i\le r}, \io) \in  \Sht_{\Ug_{n}}^{r}(S)$.
 \item Maps of coherent sheaves  $t_i : \cE \boxtimes \cO_{S} \to \cF_i$ on $X'\times S$ for $i=0,1,\cdots, r$, such that the isomorphism $\io : \cF_r \cong {}^{\t}\cF_0$ intertwines $t_r$ with ${}^{\t}t_0$, and the maps $t_{i-1}, t_{i}$ are intertwined by the modification $\a_i :  \cF_{i-1} \dr \cF_{i}$ for each $i = 1, \cdots, r$, i.e., the diagram below commutes. 
\begin{equation}\label{EFsq}
\xymatrix{\cE\boxtimes\cO_{S} \ar[d]^{t_{0}} \ar@{=}[r] &  \cE\boxtimes\cO_{S} \ar@{=}[r]  \ar[d]^{t_1} & \cdots \ar[d] \ar@{=}[r] & \cE\boxtimes\cO_{S} \ar[r]^{\sim} \ar[d]^{t_{r}} & {}^{\t}(\cE \boxtimes\cO_{S})\ar[d]^{{}^{\t}t_{0}}\\
\cF_0 \ar@{-->}[r]^{\a_0} & \cF_1 \ar@{-->}[r]^{\a_1}  &  \cdots \ar@{-->}[r]^{\a_r} &  \cF_{r} \ar[r]^-{\io}_{\sim} & {}^{\t}\cF_0 
}
\end{equation}
\end{itemize}


For $((x'_i), (\cF_i,h_{i}), (\a_i), \io, (t_i)) \in \cZ_{\KR(n)}^r(\cE)(S)$, consider the composition
\begin{equation*}
\cE \boxtimes \cO_S \xr{t_i} \cF_i \xr{h_i} \s^* \cF_i^{\vee} \xr{\s^* t^{\vee}_{i}} \s^* \cE^{\vee} \boxtimes \cO_S.
\end{equation*}
By the commutative squares in \eqref{EFsq}, the above map is independent of $i$, and it descends to a map $a: \cE\to \s^{*}\cE^{\vee}$ satisfying $\s^{*}a^{\vee}=a$. In other words, every point of $\cZ^{r}_{\KR(n)}(\cE)$ induces a possibly degenerate Hermitian form on $\cE$, according to which we have a decomposition into open and closed substacks
\begin{equation*}
\cZ_{\KR(n)}^{r}(\cE)=\coprod_{a\in \Herm(\cE)} \cZ^{r}_{\KR(n)}(\cE,a).
\end{equation*}
where $\Herm(\cE)$ is the set of (possibly degenerate) Hermitian forms on $\cE$. We have a natural map
\begin{equation*}
\th_{\KR(n)}^{r}(\cE,a): \cZ^{r}_{\KR(n)}(\cE,a)\to \Sht^{r}_{\Ug_{n}}
\end{equation*}
that is finite and unramified. 

The expected dimension of $\cZ^{r}_{\KR(n)}(\cE,a)$ is $r(n-m)$. However, its actual dimension is often bigger. In \cite{FYZ2} we use derived algebraic geometry to equip $\cZ^{r}_{\KR(n)}(\cE,a)$ with a virtual fundamental class $[\cZ^{r}_{\KR(n)}(\cE,a)]$ of the expected dimension $r(n-m)$; its image under $\th^{r}_{\KR(n)}(\cE,a)$ is a function field analog of Kudla-Rapoport special cycles on unitary Shimura varieties.
\end{exam}

\subsection{Intersection theory and higher derivatives of $L$-functions}
Classically, period integrals of automorphic forms can often be expressed as special values of automorphic $L$-functions. In the geometric context, special cycles on Shimura varieties or the moduli stack of Shtukas can be thought of analogs of period integrals. Over number fields, heights of special cycles on Shimura varieties are sometimes related to derivatives of automorphic $L$-functions, such as in the Gross-Zagier formula or in the arithmetic Gan-Gross-Prasad conjecture. Over function fields, there is not only an analogous story but also a generalization: the flexibility of having multiple legs for Shtukas allows one to relate intersection numbers of special cycles to higher derivatives of $L$-functions. We give a brief survey of results in this direction. We refer to \cite{Y-ICM} for more detailed discussion of the motivation and techniques used in the proofs.

\sss{Higher Gross-Zagier formula}

The famous Gross-Zagier formula gives an equality between heights of Heegner points (projected to a Hecke eigen piece) on a modular curve in terms of the first derivative of the automorphic $L$-function of a modular form. This formula has numerous applications in number theory, especially in the arithmetic of elliptic curves. 

In \cite{YZ1} and \cite{YZ2}, we give a higher derivative/multiple legs  version of the function field analog of the Gross-Zagier formula. We first state the version without level structure, which was treated in \cite{YZ1}.

Let $G=\PGL_{2}$, $I=\{1,\cdots, r\}$ decomposed into singletons $I_{i}=\{i\}$, and let $\l_{i}$ be the unique dominant minuscule coweight of $G$. We denote the resulting moduli stack of Shtukas $\Sht^{(I_{1},\cdots, I_{r}),\le\un\l}_{G}$ simply by $\Sht^{r}_{G}$. The admissibility condition on $\un\l$ forces $r$ to be even.  

Let $\nu:X'\to X$ be an unramified double cover. Let $T=(\Res_{X'/X}(\Gm\times X'))/\Gm$ be the one-dimensional nonsplit torus on $X$ that splits over $X'$. For such a group scheme $T$ over $X$, and a sequence $\un\mu=(\mu_{1},\cdots, \mu_{r})\in\{\pm1\}^{r}$,  $\Sht_{T}^{\un\mu}:=\Sht_{T}^{(I_{1},\cdots, I_{r}), \un\mu}$ is defined. Concretely, it can be identified with the quotient $\Sht^{(I_{1},\cdots, I_{r}), \un\mu}_{\GL_{1}, X'}$ (where the base curve is $X'$, see Example \ref{ex:Gm}) by the action of $\Pic_{X}(k)$ by pulling back to $X'$ and tensoring with rank one Shtukas. As in Example \ref{ex:HD}, we have a natural map
\begin{equation*}
\th: \Sht^{\un\mu}_{T}\to \Sht^{r}_{G}.
\end{equation*}
Consider the lifting of $\th$
\begin{equation*}
\th': \Sht^{\un\mu}_{T} \to \Sht'^{r}_{G}:=\Sht^{r}_{G}\times_{X^{r}}X'^{r}.
\end{equation*} 
One easily sees that $\Sht^{\un\mu}_{T}$ is proper of dimension $r$, and $\Sht'^{r}_{G}$ is smooth of dimension $2r$ by Corollary \ref{c:Sht sm}. Therefore the Heegner-Drinfeld cycle $\cZ^{\un\mu}_{T}:=\th'_{*}[\Sht^{\un\mu}_{T}]$ is a middle-dimensional proper cycle in the smooth Deligne-Mumford stack $\Sht'^{r}_{G}$. Let $Z^{\un\mu}_{T}\in \cohoc{2r}{\Sht'^{r}_{G,\kbar},\Qlbar}(r)$ be the cycle class of $\cZ^{\un\mu}_{T}$.

Now let $\pi$ be an everywhere unramified cuspidal automorphic representation of $G(\AA)$ with coefficients in $\Qlbar$.  One can make sense of the projection of $Z^{\un\mu}_{T}$ to the $\pi$-isotypic summand under the Hecke algebra action. We denote the resulting cohomology class by $Z^{\un\mu}_{T,\pi}\in \cohoc{2r}{\Sht'^{r}_{G,\kbar},\Qlbar}(r)$ .

\begin{theorem}[\cite{YZ1}]\label{th:YZ1} The self-intersection number of the cycle class $Z^{\un\mu}_{T,\pi}$ is
\begin{equation*}
\j{Z^{\un\mu}_{T,\pi}, Z^{\un\mu}_{T,\pi}}_{\Sht'^{r}_{G}}=\frac{q^{2-2g}}{2(\log q)^{r}}\frac{\sL^{(r)}(\pi_{F'}, 1/2)}{L(\pi,\Ad,1)}
\end{equation*}
where
\begin{itemize}
\item $\pi_{F'}$ is the base change of $\pi$ to $F'=k(X')$. 
\item $\sL(\pi_{F'},s)=q^{4(g-1)(s-1/2)}L(\pi_{F'},s)$ is the normalized $L$-function of $\pi_{F'}$ such that $\sL(\pi_{F'},s)=\sL(\pi_{F'},1-s)$.
\end{itemize}
\end{theorem}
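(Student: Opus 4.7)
The plan is to decompose both sides into sums over the same set of ``rational orbits'' $T(F)\bs G(F)/T(F)$, and to match them term by term---this is the higher-derivative analog of a Jacquet--Waldspurger style relative trace formula comparison.

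For the analytic side, I would start from Waldspurger's formula, which realizes $\sL(\pi_{F'},1/2)/L(\pi,\Ad,1)$ as the square of a toric period integral of a vector in $\pi$ against the torus $T$. Introducing $r$ auxiliary variables via Hecke operators at $r$ moving points (one per leg), and choosing the test functions so that their spherical transforms acquire logarithmic factors coming from the appropriate geometric progressions, one assembles a global ``$r$-th derivative'' relative trace formula
\begin{equation*}
\JJ_{r}(f) \;=\; \sum_{\g \in T(F)\bs G(F)/T(F)} \mathrm{Orb}_{r}(\g, f),
\end{equation*}
whose spectral expansion isolates, on the $\pi$-isotypic summand, the normalized constant $q^{2-2g}/(2(\log q)^{r})$ (from Tamagawa factors at unramified places) multiplied by $\sL^{(r)}(\pi_{F'},1/2)/L(\pi,\Ad,1)$.

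On the geometric side, I would unfold $\langle Z_{T,\pi}^{\un\mu}, Z_{T,\pi}^{\un\mu}\rangle_{\Sht'^{r}_{G}}$ as an intersection-theoretic count on the self fiber product $\Sht_{T}^{\un\mu}\times_{\Sht'^{r}_{G}}\Sht_{T}^{\un\mu}$, which carries a natural stratification by invariants $\g$ attached to pairs of $T$-orbits. Using the local-model description (Proposition \ref{p:local model}) and the factorization structure (\S\ref{ss:geom fact}), each stratum can be related to a Hitchin-type moduli $\cM_{r}$ of rank-$2$ bundles on $X'$ with a degree-$r$ Higgs-like modification; this identifies the stratum-wise intersection multiplicity $\mathrm{Int}_{r}(\g)$ with a proper intersection on a suitable subvariety of $\cM_{r}$, reducing the computation to counting points in fibers of a Hitchin-like map over the Hitchin base.

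The heart of the argument---and the main obstacle---is the orbit-by-orbit matching
\begin{equation*}
\mathrm{Int}_{r}(\g) \;=\; \frac{q^{2-2g}}{2(\log q)^{r}}\,\mathrm{Orb}_{r}(\g, f) \quad\text{for every relevant } \g.
\end{equation*}
I would approach this by showing that both quantities are coefficients of a common generating series in $r$: the geometric series arises from a trace-of-Frobenius computation on the cohomology of the Hitchin stratum indexed by $\g$, while the analytic series arises from an explicit product of local orbital integrals at the unramified places where $f$ is supported. Once this global matching is established, Hecke operators are used to pick out $\pi$ spectrally; cuspidality of $\pi$ together with the finiteness results on the cohomology of $\Sht_{G}^{r}$ from \S\ref{s:coho} ensure that the $\pi$-isotypic projection is well-defined in spite of the non-properness of $\pi^{I}$, yielding the stated equality.
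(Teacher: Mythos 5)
This theorem is stated in the survey without proof (it is a citation of \cite{YZ1}), so the comparison must be with the argument of that paper. Your architecture --- a relative trace formula for $(\PGL_{2},T)$ whose analytic side produces $\sL^{(r)}(\pi_{F'},1/2)/L(\pi,\Ad,1)$ and whose geometric side is the self-intersection of the Heegner--Drinfeld cycle, matched term by term through Hitchin-type moduli spaces, with Hecke operators isolating $\pi$ at the end --- is indeed the strategy of \cite{YZ1}. But the proposal leaves the entire mathematical content in the step you yourself flag as ``the heart of the argument,'' and the mechanism you propose for it does not work as stated. First, the $r$-th derivative on the analytic side is not produced by test functions whose spherical transforms carry logarithmic factors: in \cite{YZ1} one inserts a complex variable $s$ into the orbital integrals themselves (via $|\cdot|^{s}$, i.e.\ $q^{ds}$ on the relevant divisors), forms $\JJ(f,s)=\sum_{u}\JJ(u,f,s)$, and takes $\tfrac{d^{r}}{ds^{r}}\big|_{s=0}$; this is where $(\log q)^{-r}$ comes from. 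Second, the matching $\mathrm{Int}_{r}(\g)=c\cdot\mathrm{Orb}_{r}(\g,f)$ is not established by exhibiting both sides as coefficients of ``a common generating series in $r$'' ($r$ is fixed throughout, being the number of legs and hence the dimension of $\Sht'^{r}_{G}$). What actually closes the gap is a sheaf-theoretic identity: both sides are realized as traces of Frobenius on the direct image complexes of two different proper maps $f^{\cM}_{d}:\cM_{d}\to\cA_{d}$ and $f^{\cN}_{d}:\cN_{d}\to\cA_{d}$ to a \emph{common} Hitchin base (the geometric side via a Lefschetz-trace computation of the intersection number, the analytic side via a weighted point count); the two complexes are compared by computing them over a dense open locus of $\cA_{d}$ and then invoking that the relevant summands are shifted perverse sheaves with full support (a smallness/support-theorem input), so agreement on a dense open extends to all of $\cA_{d}(k)$. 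Without this, the orbit-by-orbit identity is an assertion equivalent to the theorem.

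Two further points that your sketch elides but that require real work in \cite{YZ1}: the geometric side does not literally decompose over $T(F)\bs G(F)/T(F)$ --- it decomposes over $\cA_{d}(k)$, and the degenerate invariants (corresponding on the spectral side to the Eisenstein and residual contributions) must be isolated and shown to cancel after the $\pi$-projection; and the $\pi$-isotypic projection of a class in $\cohoc{2r}{\Sht'^{r}_{G,\kbar}}$ is only meaningful because of the finiteness theorem of \cite[\S7]{YZ1} (the middle cohomology, modulo its Eisenstein part, is a finite-dimensional module over the Hecke algebra on which the action factors through a finite quotient) --- this is not supplied by the general structures of \S\ref{s:coho} alone. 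As a plan your proposal points in the right direction; as a proof it is missing the key identity and misidentifies the tool that proves it.
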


In \cite{YZ2},  we extended the above theorem to allow the automorphic representation $\pi$ to have Iwahori level structures (which means the local representations $\pi_{v}$ are either unramified or an unramified twist of the Steinberg representation),  and to allow ramifications for the double cover $\nu:X'\to X$. This generality allows applications to the Birch and Swinnerton-Dyer conjecture over function fields in future work, see \cite[\S1.3]{YZ2} for a preview.


\sss{Higher Siegel-Weil formula and higher theta series}
The classical Siegel-Weil formula relates the special values of Siegel-Eisenstein series on the symplectic group (resp. the unitary group) to theta functions, which are generating series of representation numbers of quadratic (resp. Hermitian) forms over number fields.  In \cite{Kud} Kudla began to study an arithmetic version of the Siegel-Weil formula:  he defined an ``arithmetic theta function''-- a generating series of arithmetic cycles on an integral model of a Shimura curve -- and discovered its relationship with the first central derivative of a Siegel-Eisenstein series on $\Sp_{4}$.  In a series of papers \cite{KR1} and \cite{KR2}, Kudla and Rapoport developed this paradigm by defining a generating series of special cycles (Kudla-Rapoport cycles) on integral models of Shimura varieties for orthogonal and unitary groups. They conjectured a relationship between the arithmetic intersection numbers of these cycles to the non-singular Fourier coefficients of the central derivative of the Siegel-Eisenstein series. Their conjecture has been recently proved by Chao Li and Wei Zhang in \cite{LZ}.

In \cite{FYZ1}, we prove a higher derivative version of the Siegel-Weil formula for unitary groups over function fields. Consider the special cycles $\cZ^{r}_{\KR(n)}(\cE,a)$ constructed in Example \ref{ex:KR}. We specialize to the case where $\rk (\cE)=n$ and $a: \cE\to\s^{*}\cE^{\vee}$ is generically an isomorphism (we call such an $a$ non-singular). In this case, $\cZ^{r}_{\KR(n)}(\cE,a)$ is a proper scheme over $k$, yet it may not have the expected dimension which is $0$ in this case. We first define a virtual fundamental cycle $[\cZ^{r}_{\KR(n)}(\cE,a)]$, which is a $0$-cycle on the proper scheme $\cZ^{r}_{\KR(n)}(\cE,a)$, so that its degree is well-defined. We prove the following higher Siegel-Weil formula

\begin{theorem} Let $r\in \NN$ be even, $\cE$ be a rank $n$ vector bundle on $X'$ and $a: \cE\to \s^{*}\cE^{\vee}$ be a non-singular Hermitian form on $\cE$. Then we have
\begin{equation*}
\deg [\cZ_{\KR(n)}^r(\cE, a)]=\frac{1}{(\log q)^r}  \left(\dfrac{d}{ds} \right)^r\Big|_{s=0} \left( q^{ds} \wt{E}_{a}(\cE,s,\Phi) \right).
\end{equation*}
Here $d=-\deg(\cE)+n\deg\omega_X =-\chi(X',\cE)$, and $\wt{E}_{a}(\cE,s,\Phi)$ is a certain normalization of the $a$th Fourier coefficient of the Siegel-Eisenstein series $E(g,s,\Phi)$ on the unitary group $\Ug_{2n}$, expanded at $\cE$ (lifted to an element in the Siegel-Levi $\GL_{n}(\AA_{F})$).
\end{theorem}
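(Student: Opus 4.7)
The plan is to compare both sides \emph{term by term} in a suitable decomposition, rather than trying to match them as a single generating object. On the geometric side, the key input is that $\cZ_{\KR(n)}^{r}(\cE,a)$ admits a natural map to a ``Hitchin-like'' base parametrizing the data $(\cE,a)$, and further admits a product structure after stratification by the legs $(x_{i}')\in X'^{r}$. On the analytic side, the non-singular Fourier coefficient $\wt{E}_{a}(\cE,s,\Phi)$ of the Siegel-Eisenstein series factorizes as an Euler product of local Whittaker integrals, and over a function field each local factor is an explicit rational function in $q^{-s}$.

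The steps I would carry out, in order, are: (1) Set up the derived moduli stack refining $\cZ_{\KR(n)}^{r}(\cE,a)$ so that the virtual class $[\cZ_{\KR(n)}^{r}(\cE,a)]$ of expected dimension $0$ is well-defined and compatible with the (derived) forgetful map to $\Sht_{\Ug_{n}}^{r}$; verify a base-change/excess-intersection formula that expresses $\deg[\cZ_{\KR(n)}^{r}(\cE,a)]$ as an intersection of the section $t\mapsto (t_{i})$ with a zero-section in a Koszul/derived sense. (2) Use the factorization/local-model results of \S\ref{ss:geom fact} and Proposition \ref{p:local model}, combined with the Cartesian square \eqref{defn Sht}, to reduce the computation of the intersection number to a sum over pairs $(\cF_{\bullet}, t_{\bullet})$ of modifications of $\cE$ by rank-one elementary steps, indexed by the leg positions, and then organize this sum by the ``singularity type'' of the induced Hermitian form. (3) On the Eisenstein side, normalize $E(g,s,\Phi)$ so that $\wt{E}_{a}(\cE,s,\Phi)$ is a polynomial in $q^{s}$ and $q^{-s}$ (using the $F^{\sep}$-rationality together with the functional equation), compute each local factor explicitly as a local Whittaker/density integral at each place $v\in|X|$ (with a special analysis at places of bad reduction of the double cover $\nu$ or where $\cE$ is non-self-dual), and extract its Taylor expansion at $s=0$ as a polynomial of degree $\le d$ in $q^{s}$.

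(4) Match the two sides locally, place-by-place: for a place $v$ where everything is unramified and the local data of $(\cE,a)$ is non-degenerate, show that the corresponding local intersection number equals a local representation density attached to the $v$-adic component of $a$. Away from the legs, this matches a known local Siegel-Weil identity; at a leg point, one gets the derivative with respect to that factor by the geometric interpretation of $\partial_{s}$ as an arithmetic modification at a leg (this is parallel to how each leg contributes one power of $(\log q)^{-1}$ and one derivative in Theorem \ref{th:YZ1}). (5) Multiply the local identities together and compare with the factorization of the global Fourier coefficient; the factor $q^{ds}$ in the statement arises as the normalization $q^{-\chi(X',\cE)s}$ accounting for the conductor of the Hermitian bundle, and the factor $\tfrac{1}{(\log q)^{r}}$ comes from converting $r$ derivatives in $s$ into $r$ derivatives in $q^{s}$ at $s=0$.

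The hard part will be Step (1), specifically the construction and verification of properties of the virtual fundamental class in the derived setting, and Step (4), matching the local geometric intersection number with the $r$-th derivative of the local Whittaker factor. The geometric-to-analytic matching at a single leg requires identifying the derived tangent-obstruction complex of the Koszul section with an explicit lattice-counting problem whose generating function is the local density; this is a function-field analog of Kudla--Rapoport's local conjecture, but with the crucial advantage that here the derivative is taken with respect to the complex parameter $s$ rather than with respect to a $p$-adic deformation. One expects this to be tractable because of the explicit local structure of $\Gr_{G}$ near minuscule Schubert cells, but carrying it out uniformly in $n$ and $r$ is the main technical burden; handling the Hermitian form of intermediate rank (i.e.\ singular-but-non-zero $a$) via induction on $n$ and on $\rk(a)$ should complete the argument, reducing the general case to the non-degenerate one handled in full.
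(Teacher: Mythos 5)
The scaffolding you describe in Steps (1)--(3) and (5) — defining the virtual class in a derived/Koszul sense, factoring the non-singular Fourier coefficient $\wt{E}_{a}(\cE,s,\Phi)$ into local Whittaker/density polynomials in $q^{-s}$, and recovering the $r$-th derivative and the $(\log q)^{-r}$ and $q^{ds}$ normalizations at the end — is consistent with how the argument is organized. But the entire mathematical content of the theorem sits in your Step (4), the identification of the local geometric multiplicities with the local density polynomials, and that step is left as a conjecture ("a function-field analog of Kudla--Rapoport's local conjecture... the main technical burden") rather than proved. Worse, the mechanism you propose for it is precisely the one the actual proof avoids: the paper states explicitly that the proof is \emph{completely different} from the number-field arguments based on induction on $n$, and instead reduces to an equality of perverse sheaves on the space of Hermitian torsion sheaves coming from Springer theory. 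Your closing plan — induction on $n$ and on $\rk(a)$, reducing to the non-degenerate case — is the Li--Zhang-style strategy, not the one used here, and there is no evidence offered that it closes over function fields; note also that in the theorem as stated $a$ is already non-singular of full rank $n$, so the "intermediate rank" induction you invoke addresses a generalization rather than the case at hand.

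A second, more conceptual problem is your heuristic in Step (4) that "at a leg point, one gets the derivative with respect to that factor." The legs $x_{i}'$ are moving points of $X'$ over which $\cZ^{r}_{\KR(n)}(\cE,a)$ fibers; $\deg[\cZ^{r}_{\KR(n)}(\cE,a)]$ is a single global count, and the $r$ derivatives of the product of local factors distribute over \emph{all} places by Leibniz — they are not localized at $r$ distinguished places. The correct shape of the local statement is therefore an identity of entire polynomials in $q^{-s}$ (equivalently, of Frobenius traces of two perverse sheaves) at every place, from which the global $r$-th derivative identity follows by multiplying and differentiating; a place-by-place matching of $r$-th derivatives, or a "one leg $=$ one derivative" dictionary in the style of Theorem \ref{th:YZ1}'s heuristics, does not make sense here. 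Without the Springer-theoretic sheaf identity (or some proved substitute for it), the proposal does not establish the theorem.
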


The proof is completely different from the number field case which was based on induction on $n$. The proof reduces to an equality of perverse sheaves that originated from Springer theory.


In \cite{FYZ2}, we construct the complete generating series of special cycles, i.e., we define a virtual fundamental class $[\cZ_{\KR(n)}^r(\cE, a)]$ for $\cE$ of any rank $m\le n$ and arbitrary Hermitian forms $a$ on $\cE$. In particular, $a$ can be singular or even zero, in which case the virtual  fundamental classes involve Chern classes of tautological bundles on $\Sht^{r}_{\Ug_{n}}$.   Let $Z_{\KR(n)}^{r}(\cE,a)$ be the image of $[\cZ_{\KR(n)}^r(\cE, a)]$ under the map $\th^{r}_{\KR(n)}(\cE,a)$, as an element in the Chow group $\Ch_{r(n-m)}(\Sht^{r}_{\Ug_{n}})$. We then assemble these algebraic cycles into a higher theta series. 

More precisely, for fixed $m\le n$, then we consider the quasi-split unitary group $\Ug_{2m}$ over $X$ with respect to the double cover $X'/X$, and its standard Siegel parabolic $P_m$. We define a function
\begin{equation*}
\wt Z^{r}_{m}: \Bun_{P_{m}}(k)\to \Ch_{r(n-m)}(\Sht^{r}_{\Ug_{n}})
\end{equation*}
as follows. A point in $\Bun_{P_{m}}(k)$ is the same datum as a pair  $(\cG,\cE)$, where $\cG$ is a rank $2m$ Hermitian vector bundle on $X'$ (here the notion of a Hermitian form is an isomorphism $\cG\isom \s^{*}\cG^{*}$, where $\cG^{*}$ is the {\em linear dual}  of $\cG$ rather than Serre dual), and $\cE$ is a Lagrangian sub-bundle of $\cG$. Then we define $\wt Z^{r}_{m}(\cG,\cE) $ using the ``Fourier expansion" 
\begin{equation*}
\wt Z^{r}_{m}(\cG,\cE) =\chi(\det\cE)q^{n(\deg \cE-\deg\om_{X}) /2}\sum_{a\in \Herm(\cE)}\psi_{0}(\j{e_{\cG,\cE}, a})Z^{ r}_{\KR(n)}(\cE, a).
\end{equation*}
Here $\psi_{0}:k\to \Qlbar^{\times}$ is a nontrivial character, $e_{\cG,\cE}\in \Ext^{1}(\s^{*}\cE^{*}, \cE)$ is the extension class of the exact sequence $\cE\incl \cG\surj \s^{*}\cE$, which can be paired with $a\in \Hom(\cE,\s^{*}\cE^{\vee})$ under Serre duality.

\begin{conj}[Modularity conjecture, {\cite[Conjecture 4.12]{FYZ2}}]The function $\wt Z^{r}_{m}$ descends to a function
\begin{equation*}
Z^{r}_{m}:\Bun_{\Ug_{2m}}(k)\to \Ch_{r(n-m)}(\Sht^{r}_{\Ug_{n}}).
\end{equation*}
\end{conj}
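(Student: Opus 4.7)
The plan is to reduce the descent statement to invariance under a small set of elementary moves on Lagrangian sub-bundles. The fiber of the forgetful map $\Bun_{P_m}(k) \to \Bun_{\Ug_{2m}}(k)$ over $[\cG]$ is the set of Lagrangian sub-bundles $\cE \subset \cG$ modulo $\Aut(\cG)$. Any two Lagrangians $\cE, \cE' \subset \cG$ can be linked by a finite sequence of moves corresponding to the Bruhat-type decomposition of $\Ug_{2m}(F)/P_m(F)$: (L) a Levi-type isomorphism $\cE \xrightarrow{\sim} \cE'$; (U) a unipotent shift, where $\cE = \cE'$ as subsheaves of $\cG$ but the induced identification $\cG/\cE \cong \s^*\cE^*$ is twisted by a symmetric element $b \in \Hom(\s^*\cE^*,\cE)$; and (W) a Weyl-type interchange between two transverse Lagrangians $\cE,\cE'$ with $\cG \cong \cE \oplus \cE'$.

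Case (L) is immediate: both the prefactor $\chi(\det\cE)\, q^{n(\deg\cE-\deg\om_X)/2}$ and the sum $\sum_a \psi_0(\j{e_{\cG,\cE},a})\, Z^{r}_{\KR(n)}(\cE,a)$ are manifestly invariant under isomorphism of $\cE$. Case (U) reduces to translation-invariance of a finite sum over $\Herm(\cE)$: shifting $e_{\cG,\cE}$ by $b$ multiplies each summand by $\psi_0(\j{b,a})$, but the classes $Z^{r}_{\KR(n)}(\cE,a)$ are intrinsic to $\cE$ and $a$, so the total is unchanged after reindexing.

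Case (W), the Weyl interchange, is the heart of the conjecture and the main obstacle. Here modularity amounts to a ``geometric Poisson summation'' identity comparing $\sum_{a \in \Herm(\cE)} \psi_0(\j{e_{\cG,\cE},a})\, Z^{r}_{\KR(n)}(\cE,a)$ with the analogous sum over $\Herm(\cE')$, after accounting for the ratio of normalizing prefactors. My approach would be to realize both sides as Fourier coefficients, with respect to the two different Lagrangian polarizations, of a single polarization-independent ``Weil-representation-valued'' generating class constructed from a derived moduli of $\cO_{X'}$-linear maps from $\cG$ to the universal Hermitian Shtuka on $\Sht^{r}_{\Ug_{n}}$. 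Once such a universal class is in place, the modularity identity between the two expansions becomes the geometric Fourier transform intertwining the two polarizations, in direct analogy with the classical Weil representation.

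The main obstacle is the derived intersection theory on the degenerate stratum of $\cZ^{r}_{\KR(n)}(\cE,a)$. For nondegenerate $a$ (the case $m=n$), the higher Siegel-Weil formula of \cite{FYZ1} already organizes the non-singular Fourier coefficients, and one should be able to translate that identity into the corresponding Weyl interchange on the nondegenerate locus. For $m<n$ the virtual classes $[\cZ^{r}_{\KR(n)}(\cE,a)]$ from \cite{FYZ2} involve Chern classes of tautological bundles on $\Sht^{r}_{\Ug_{n}}$, and showing that these Chern-class contributions transform correctly under the Weyl-type Fourier intertwiner is where essentially new input in derived algebraic geometry of Shtuka moduli will be needed. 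A sensible first step would be to verify the conjecture for $m=1$, where $\Bun_{\Ug_2}$ is essentially $\Pic_{X'}$ twisted by a Hermitian structure, the unipotent radical of $P_1$ is one-dimensional, and the Weyl interchange admits an explicit description amenable to direct computation; this should serve both as a testing ground and as a source of the precise form of the Fourier-transform identity needed in general.
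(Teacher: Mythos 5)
The statement you are asked to prove is not a theorem of this paper: it is stated as an open conjecture (the Modularity Conjecture of \cite{FYZ2}), and the paper records only partial evidence --- \cite{FYZ2} gives special cases and \cite{FYZ3} proves it after restricting to the generic fiber of $\Sht^{r}_{\Ug_{n}}\to X'^{r}$ and passing to cohomology classes. Your reduction is the standard and correct framework: via Weil uniformization, $\Bun_{P_{m}}(k)=P_{m}(F)\backslash \Ug_{2m}(\AA)/K$ and $\Bun_{\Ug_{2m}}(k)=\Ug_{2m}(F)\backslash \Ug_{2m}(\AA)/K$, so descent is exactly left-invariance under $\Ug_{2m}(F)$, which by the Bruhat decomposition for the maximal parabolic $P_{m}$ is generated by $P_{m}(F)$ together with one Weyl element. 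Your cases (L) and (U) handle the $P_{m}(F)$-invariance, and (U) is where the normalizing prefactor $\chi(\det\cE)q^{n(\deg\cE-\deg\om_{X})/2}$ and the character $\psi_{0}(\j{e_{\cG,\cE},a})$ earn their keep; this part is fine (modulo the caveat that the ``moves'' live on adelic cosets, so one must phrase them in terms of generic Lagrangians plus integral data rather than literal sub-bundles).

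The genuine gap is case (W). The ``polarization-independent Weil-representation-valued generating class'' whose two Fourier expansions would give the identity is not constructed; postulating its existence is logically equivalent to the conjecture itself, so nothing has been reduced. Concretely, the Weyl interchange requires a geometric Poisson summation: an identity in $\Ch_{r(n-m)}(\Sht^{r}_{\Ug_{n}})$ between $\sum_{a\in\Herm(\cE)}\psi_{0}(\j{e_{\cG,\cE},a})Z^{r}_{\KR(n)}(\cE,a)$ and the analogous sum over $\Herm(\cE')$ for a transverse Lagrangian $\cE'$. For $r=0$ this is the classical theory of the Weil representation (values in functions on $\Bun_{\Ug_{n}}(k)$, where Poisson summation is available); for $r>0$ the summands are virtual classes built from derived fibers and Chern classes of tautological bundles, and no Fourier-analytic mechanism on the Chow group is currently known --- this is precisely why the statement remains a conjecture, with only the cohomological generic-fiber version established in \cite{FYZ3}. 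Your proposed first step ($m=1$) is reasonable as an experiment, but as written the argument does not prove the statement.
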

In other words, the  class $\wt Z^{r}_{m}(\cG,\cE)\in \Ch_{r(n-m)}(\Sht^{r}_{\Ug_{n}})$ should depend only on the Hermitian bundle $\cG$ and not on its Lagrangian sub-bundle $\cE$. When $r=0$, $\Ch_{0}(\Sht^{0}_{\Ug_{n}})$ is simply the space of functions on $\Bun_{\Ug_{n}}(k)$ and the classical theory of Weil representation implies that in this case $\wt Z^{r}_{m}$ does descend to $\Bun_{\Ug_{2m}}(k)$. The resulting two variable function on $\Bun_{\Ug_{2m}}(k)\times \Bun_{\Ug_{n}}(k)$ is the classical theta function.

In \cite{FYZ2} we give several evidences for the Modularity Conjecture; in \cite{FYZ3} we show the conjecture holds if we restrict to the generic fiber of $\Sht^{r}_{\Ug_{n}}\to X'^{r}$ and pass to cohomology classes of the cycles.

\end{document}